\documentclass[11pt]{article}

\usepackage[margin=1in]{geometry}
\usepackage{setspace}
\usepackage{amsmath,amssymb,amsfonts,amsthm,mathtools,mathrsfs}
\usepackage{bm}
\usepackage{bbm}
\usepackage{booktabs}
\usepackage{graphicx}
\usepackage{float}
\usepackage{caption}
\usepackage{subcaption}
\usepackage{array}
\usepackage{enumitem}
\usepackage{natbib}
\usepackage{hyperref}
\usepackage{xcolor}
\usepackage{algorithm}
\usepackage{algpseudocode}
\usepackage{threeparttable}
\usepackage{longtable}
\usepackage{needspace}

\hypersetup{
  colorlinks=true,
  linkcolor=blue,
  citecolor=blue,
  urlcolor=blue
}

\newtheorem{assumption}{Assumption}
\theoremstyle{definition}
\newtheorem{definition}{Definition}
\theoremstyle{plain}
\newtheorem{lemma}{Lemma}
\newtheorem{proposition}{Proposition}
\newtheorem{theorem}{Theorem}
\newtheorem{corollary}{Corollary}

\newcommand{\E}{\mathbb{E}}
\newcommand{\barF}{\overline F}
\newcommand{\calF}{\mathcal{F}}
\newcommand{\calQ}{\mathcal{Q}}

\newcommand{\calW}{\mathcal{W}}
\newcommand{\calH}{\mathcal{H}}

\newcommand{\dist}{\operatorname{dist}}
\newcommand{\CE}{\mathrm{CE}}
\newcommand{\PE}{\mathrm{PE}}
\newcolumntype{L}[1]{>{\raggedright\arraybackslash}p{#1}}

\begin{document}

\begin{center}
{\Large\bf Learning from Historical Transactions: Robust Supplier Pricing and Stocking with Sparse Data}\vspace{0.3cm}

{Zhiqiang Chen$^{1}$}\vspace{0.12cm}

{\scriptsize $^{1}$School of Management and Engineering, Nanjing University,
Nanjing 210008, China, chenzq@smail.nju.edu.cn}\vspace{0.2cm}

\today
\end{center}

\begin{abstract}
Upstream suppliers often set wholesale prices and reserve capacity without direct access to the detailed demand information held by downstream retailers. We study how a supplier can learn from a short history of wholesale prices, the retail prices subsequently chosen by a better-informed retailer, and the associated purchase probabilities. A quantile-only method (Q) uses each retail price and purchase probability as a demand observation. Our decision-informed method (DI) also uses the wholesale price under which the retailer chose that price. This additional context rules out demand curves that fit the observed sales outcomes but cannot explain the retailer's past choices.

We characterize the worst-case retailer response to a new wholesale price under a broad, nonparametric class of demand curves. When the retailer has a unique best price, the relevant uncertainty is summarized by the lowest demand that remains possible. When several prices are tied, the supplier instead evaluates a finite collection of induced-demand cases. We also distinguish two forms of imperfect behavior. Under condition error (CE), the observed price is retained but its link to the wholesale price may be imperfect. Under price error (PE), the observed price may lie near an unobserved exact optimum. Both extensions remain computationally finite. The resulting downstream-demand summary leads directly to robust wholesale-pricing and stocking decisions. Numerical experiments across six demand environments show that DI provides substantial value with only a few transactions and remains useful under moderate error; with three observations and $\alpha=0$, DI improves the profit-to-oracle ratio by 9.1--11.8 percentage points.

\end{abstract}

\noindent\textbf{Keywords:} supply-chain pricing; transaction data; inverse optimization; $\alpha$-regularity; robust optimization; small data.

\section{Introduction}
\label{sec:intro}

Across many industries, upstream suppliers reach end customers through independent retailers, distributors, or service networks rather than through direct sales. Although such channel structures expand market reach, they also separate the supplier's pricing and capacity decisions from the detailed demand information generated at the point of sale. To set a wholesale price and reserve capacity, the supplier must nevertheless anticipate how both the retailer and the end market will respond to new contract terms. This task is particularly challenging because wholesale contracts are typically renegotiated only periodically, while retail prices may remain unchanged for extended periods owing to menu costs, internal approval procedures, and customer-facing considerations \citep{LevyEtAl1997,ZbarackiEtAl2004}. Consequently, even for an established product, the supplier may observe only a handful of distinct wholesale prices, downstream pricing responses, and corresponding sales outcomes. Such a sparse transaction history is generally insufficient to recover the end-market demand curve directly.

In many decentralized channels, much of the demand information unavailable to the supplier resides downstream. Retailers interact directly with consumers and accumulate granular knowledge about purchasing behavior, local demand conditions, and responses to promotions. This information asymmetry is particularly visible in grocery retail. A review by the UK Competition and Markets Authority documents that supermarkets use loyalty-program purchase histories to construct detailed shopper profiles, while consumer-goods suppliers obtain some of these insights only through retailer-controlled analytics services such as Tesco's Dunnhumby and Sainsbury's SupplyHub \citep{CMA2024LoyaltyPricing}. Geographic separation, organizational boundaries, and limited interfirm data sharing can therefore leave the supplier observing the wholesale terms of past transactions, the retail prices subsequently chosen, and the resulting sales outcomes, while the underlying demand information used by the retailer remains private. Prior contracting research shows that such private downstream demand information can materially affect contract design and channel performance \citep{CorbettZhouTang2004,KalkanciChenErhun2011,NasserTurcic2019}. Thus, even when the supplier cannot directly observe the retailer's demand information, the decisions made using that information may themselves provide an indirect and potentially valuable signal.

Existing approaches do not fully exploit this signal. Faced with limited visibility into end-market demand, a supplier may impose a parametric demand model, estimate demand nonparametrically, or protect its decisions against a set of demand curves consistent with the observed transaction outcomes \citep{BesbesZeevi2009,denBoer2015}. A parametric model can extrapolate from a small number of observations, but its recommendations may be sensitive to functional-form misspecification \citep{NambiarSimchiLeviWang2019}. Flexible nonparametric approaches avoid this restriction, but learning an unknown demand curve generally requires price experimentation and sufficiently rich variation in observed prices \citep{BesbesZeevi2009,CheungSimchiLeviWang2017}. Robust pricing based on partial quantile information offers a useful middle ground: it retains all demand curves consistent with the observed retail prices, purchase probabilities, and a mild shape restriction, and protects the decision against the least favorable curve in this set \citep{AllouahBahamouBesbes2023,GeHeWangWang2025}. Such methods can provide meaningful guarantees from only a few observations. However, they treat each historical retail price merely as a point at which demand was observed. In our setting, the retail price is itself an endogenous decision made by a better-informed retailer in response to a particular wholesale price. Ignoring that wholesale-price context therefore discards information about how the observed price--purchase pair was generated.

Motivated by this gap, we take a decision-informed view of historical transactions. A transaction records not only the market outcome associated with a retail price, but also a pricing decision made by a retailer with superior knowledge of end-market demand. Because this decision is made in response to the wholesale term offered by the supplier, the pair of wholesale and retail prices contains information about the retailer's underlying demand assessment that is not revealed by the sales outcome alone. Historical transactions can therefore be used not merely as observations of demand, but also as indirect evidence about which demand conditions could have rationalized the retailer's past behavior. This perspective raises two central questions for the supplier: how much additional demand information can be extracted from the retailer's historical pricing decisions, and how valuable is that information for subsequent wholesale pricing and capacity decisions, particularly when the retailer's decisions are not perfectly optimal? To address these questions, we develop a decision-informed approach that links what can be learned from historical transactions to the supplier's subsequent operational decisions.

Our contributions are as follows.

\begin{enumerate}[leftmargin=1.6em]
\item \textbf{Using decision context as demand information.} We compare a quantile-only method (Q), which uses historical retail prices and purchase probabilities, with a decision-informed method (DI), which also uses the associated wholesale prices. DI removes demand curves that fit the observed outcomes but cannot explain the retailer's past prices. We characterize the resulting worst-case response to a new wholesale price under both unique and tied retailer prices.

\item \textbf{Separating two sources of imperfect decisions.} Condition error (CE) allows the observed price to be only approximately consistent with the historical wholesale price. Price error (PE) instead allows the observed price to lie near an unobserved exact optimum. We show how each error enlarges the set of plausible demand curves, when the historical decision remains informative, and how the two error measures are related.

\item \textbf{Translating learned information into supplier actions.} For each candidate wholesale price, the information stage yields either a lowest feasible demand or a finite set of tied-price demand cases. These summaries lead directly to robust stocking and contract choices. Across six demand environments, DI delivers sizeable gains with only a few transactions and retains value under moderate condition error and under price error in both historical and current decisions.
\end{enumerate}

\subsection{Related Literature}
\label{subsec:literature}

Our study relates to supply-chain information, learning from observed decisions, robust pricing with limited demand information, and distributionally robust optimization.

\paragraph{Supply-chain information and transaction data.}
A large literature studies contracting and pricing when demand information is distributed asymmetrically across supply-chain partners \citep{LarivierePorteus2001,CorbettZhouTang2004,AkanAtaLariviere2011,PerakisRoels2007,KalkanciChenErhun2011}. More recent work uses downstream actions as data. In newsvendor settings, historical orders or reservations reveal critical-fractile information that can refine a supplier's demand estimate or set of plausible distributions \citep{ZhaoHaskellYu2024,YuDongSun2025,HuangYuZhao2026}. We share the idea that an action can reveal private information, but our signal is different: a retail price chosen in response to a wholesale price restricts how demand can change near the observed price. We use that restriction to support later wholesale-pricing and capacity decisions.

\paragraph{Learning from observed decisions.}
Inverse optimization uses observed choices to learn which underlying models could have made those choices optimal \citep{ChanMahmoodZhu2025}. A key modeling choice is how to represent imperfect behavior. One approach allows the optimality condition to hold only approximately. Another, developed for noisy decision data by \citet{AswaniShenSiddiq2018}, treats the observed action as close to an unobserved exact optimum. We retain both interpretations because they describe different business situations: the retailer may misperceive the relevant margin, or the recorded price itself may be a noisy or boundedly rational adjustment. Our goal is not to estimate one demand curve. We retain a set of plausible curves and use decision information only to remove curves that are inconsistent with the transaction context.

\paragraph{Robust pricing with partial information.}
Robust pricing has been studied under support, moment, sample, and quantile information \citep{BergemannSchlag2011,AllouahBahamouBesbes2022,AllouahBahamouBesbes2023}. The closest benchmark is \citet{GeHeWangWang2025}, who combine a small number of price--purchase observations with $\alpha$-regularity. We use the same broad shape class but study observations generated by a better-informed retailer. Q uses the observed demand points in the conventional way. DI additionally asks whether the historical wholesale prices can rationalize the retailer's chosen prices.

\paragraph{Distributionally robust optimization in small samples.}
Moment and support sets provide classical protection against misspecification \citep{Popescu2005,DelageYe2010}, while Wasserstein methods build neighborhoods around empirical distributions and provide statistical guarantees \citep{MohajerinEsfahaniKuhn2018,HanasusantoKuhn2018,KuhnEtAl2019,GaoKleywegt2023}. Transaction-informed Wasserstein models show how optimality conditions can refine such neighborhoods \citep{YuDongSun2025}. Our approach is designed for a different small-data setting: only a few distinct price locations are observed, and the main information comes from the shape of demand and the context of the retailer's decisions.

\section{Model and Transaction-Informed Demand}
\label{sec:model}

We consider a decentralized channel in which an upstream supplier sells through a better-informed retailer. The supplier sets a wholesale price and reserves stock; the retailer then sets the retail price using its superior knowledge of end-market demand. The supplier observes only a short history of contract terms, retail-price decisions, and sales outcomes.

\subsection{Channel, transactions, and supplier decisions}

Consumers have valuations distributed according to $F$, with density $f$ and tail demand $\barF(s)=1-F(s)$. We normalize market size to one, so $\barF(s)$ is expected demand at retail price $s$. Given wholesale price $w$, the retailer chooses
\begin{equation}
\mathcal S_F(w)=\arg\max_{s\ge w}(s-w)\barF(s).
\label{eq:retailer}
\end{equation}
We assume that $\mathcal S_F(w)$ is nonempty and compact. Interior responses have positive retailer margin; boundary responses at $s=w$ are interpreted as limits and do not change the robust guarantee.

The supplier observes $\calH=\{(w_i,s_i,q_i):i=1,\ldots,T\}$, where $q_i=\barF(s_i)$. After relabeling, $s_1<\cdots<s_T$, $1>q_1>\cdots>q_T>0$, and $s_i>w_i$. The pair $(s_i,q_i)$ is a demand observation. The wholesale price $w_i$ contains additional information because $s_i$ was chosen in response to that contract. Define the retailer pricing index $\phi_F(s)=s-\barF(s)/f(s)$. An interior optimum satisfies
\begin{equation}
\phi_F(s_i)=w_i.
\label{eq:historical-foc}
\end{equation}
Thus, the transaction records both demand at $s_i$ and a restriction on demand behavior around that price.

\begin{definition}
\label{def:alpha-regular}
For $\alpha\in[0,1]$, define $\psi_\alpha(s\mid F)=(1-\alpha)s-\barF(s)/f(s)$. Demand is \emph{$\alpha$-regular} if $\psi_\alpha(\cdot\mid F)$ is nondecreasing.
\end{definition}

This shape restriction avoids a parametric demand model. It reduces to regularity at $\alpha=0$ and to monotone hazard rate at $\alpha=1$. Since $\phi_F(s)=\psi_\alpha(s\mid F)+\alpha s$, the retailer condition in \eqref{eq:historical-foc} identifies a global optimum. For $\alpha>0$, that optimum is unique; for $\alpha=0$, several prices may tie.

\begin{assumption}[Uniform tie selection]
\label{ass:uniform-selection}
If $\alpha=0$ and $\mathcal S_F(w)=[\ell_F(w),u_F(w)]$, then $S_F(w)\mid(F,w)\sim\operatorname{Unif}[\ell_F(w),u_F(w)]$. A singleton interval is interpreted as a point mass.
\end{assumption}

We compare two sets of plausible demand curves:
\begin{align}
\calF_\alpha^Q(\calH)
&=\{F:F\text{ is $\alpha$-regular},\ \barF(s_i)=q_i\ \forall i\},
\label{eq:FQ}\\
\calF_\alpha^{DI}(\calH)
&=\{F\in\calF_\alpha^Q(\calH):\phi_F(s_i)=w_i\ \forall i\}.
\label{eq:FDI}
\end{align}
Q uses only the observed retail price and demand. DI also uses the wholesale-price context of the retailer's decision.

The supplier chooses $w\in\calW\subset\mathbb R_+$ and stock $z\ge0$. If the retailer's response generates demand $q$, supplier profit is
\begin{equation}
\pi(w,z,q)=w\min\{z,q\}-cz,
\label{eq:profit}
\end{equation}
where $c>0$ is unit stocking cost. The remaining analysis first identifies the downstream demand that can follow each $w$ and then converts that information into supplier decisions.

\subsection{Demand bands implied by historical transactions}
\label{sec:response}

The ambiguity sets in \eqref{eq:FQ}--\eqref{eq:FDI} contain infinitely many demand curves. To use these sets for prediction, we first need sharp bounds on demand between the observed prices. Prior work on robust pricing with quantile information shows that generalized-Pareto tails generate such extremal bounds under $\alpha$-regularity: they bind the shape restriction and can be calibrated to pass through observed price--demand points \citep{AllouahBahamouBesbes2023,GeHeWangWang2025}. We use this family only as a boundary construction; the underlying demand model remains nonparametric.

The normalized generalized-Pareto family is
\begin{equation}
\bar\Gamma_\alpha(t)=
\begin{cases}
[1+(1-\alpha)t]^{-1/(1-\alpha)},&0\le\alpha<1,\\
e^{-t},&\alpha=1,
\end{cases}
\label{eq:gamma-main}
\end{equation}
on its natural domain. For observations $(a,q_a)$ and $(b,q_b)$ with $a<b$ and $q_a>q_b$, define
\begin{equation}
\begin{gathered}
\ell_\alpha(a,q_a;b,q_b)
=a+(b-a)\frac{\bar\Gamma_\alpha^{-1}(1/q_a)}{\bar\Gamma_\alpha^{-1}(q_b/q_a)},\\[-1mm]
\bar G_\alpha(v\mid a,q_a,b,q_b)=
\begin{cases}
1,&v<\ell_\alpha(a,q_a;b,q_b),\\
q_a\bar\Gamma_\alpha\!\left(
\bar\Gamma_\alpha^{-1}\!\left(\dfrac{q_b}{q_a}\right)\dfrac{v-a}{b-a}
\right),&v\ge\ell_\alpha(a,q_a;b,q_b).
\end{cases}
\end{gathered}
\label{eq:gpd-interpolation-main}
\end{equation}
The curve in \eqref{eq:gpd-interpolation-main} passes through both observations. Write $\bar G_{\alpha,i}$ for the curve generated by $(s_i,q_i)$ and $(s_{i+1},q_{i+1})$. Applying this construction to adjacent observations yields the lower bridge inside an interval and the one-sided extensions from neighboring intervals. Together, these curves summarize the most and least demand that Q can support without selecting a particular parametric model.

In an interior interval $[s_i,s_{i+1}]$, Q therefore permits demand between
\begin{equation}
\underline G_{\alpha,i}^Q(v)=\bar G_{\alpha,i}(v),\qquad
\overline G_{\alpha,i}^Q(v)=\min\{\bar G_{\alpha,i-1}(v),\bar G_{\alpha,i+1}(v)\},
\label{eq:q-lower-envelope}
\end{equation}
with the natural one-sided version at the first and last intervals.

Q uses only where the demand curve passes. DI also asks whether the same curve could have made each observed retail price optimal under its historical wholesale price. The first-order condition gives $\bar F'(s_i)=-q_i/(s_i-w_i)$, so the transaction restricts how sharply demand may change around the observed point. The same generalized-Pareto family translates this decision information into the boundary
\begin{equation}
\bar H_\alpha(v\mid r,q,w)
:=q\bar\Gamma_\alpha\!\left(\frac{v-r}{r-w}\right).
\label{eq:historical-gp-boundary}
\end{equation}
Write $\bar H_{\alpha,i}(v)=\bar H_\alpha(v\mid s_i,q_i,w_i)$. The DI upper boundary is
\begin{equation}
\overline G_{\alpha,i}^{DI}(v)
=\min\{\overline G_{\alpha,i}^Q(v),\bar H_{\alpha,i}(v),\bar H_{\alpha,i+1}(v)\}.
\label{eq:di-upper-envelope}
\end{equation}
Thus, Q identifies a demand band, while DI narrows that band by using the retailer's historical choices.

Before using these bands to predict the response to a new contract, the supplier must verify that the entire transaction history can be explained by one demand curve. Local fit in a single interval is not sufficient: the interval-by-interval bounds must be mutually compatible. The next result turns this global data-quality requirement into a direct comparison of the lower and upper bands.

\begin{proposition}[Historical consistency]
\label{prop:history-feasible}
For $K\in\{Q,DI\}$, let $\overline G_{\alpha,i}^K$ be the corresponding upper boundary. Then
\begin{equation}
\calF_\alpha^K(\calH)\neq\varnothing
\quad\Longleftrightarrow\quad
\underline G_{\alpha,i}^Q(v)\le\overline G_{\alpha,i}^K(v)
\quad\forall v\in[s_i,s_{i+1}],\ i=1,\ldots,T-1.
\label{eq:Q-band-feas-main}
\end{equation}
Moreover, $\calF_\alpha^{DI}(\calH)\subseteq\calF_\alpha^Q(\calH)$.
\end{proposition}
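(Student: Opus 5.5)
The inclusion $\calF_\alpha^{DI}(\calH)\subseteq\calF_\alpha^Q(\calH)$ is immediate from \eqref{eq:FDI}, which is defined as a subset of \eqref{eq:FQ}. I will therefore concentrate on the equivalence in \eqref{eq:Q-band-feas-main}, proving each direction separately for $K=Q$ and then adapting the argument to $K=DI$.

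\emph{Necessity.} Fix $F\in\calF_\alpha^K(\calH)$ and write $\psi=\psi_\alpha(\cdot\mid F)$. The key tool is a comparison lemma for $\alpha$-regular tails: $\alpha$-regularity says $\frac{d}{ds}\bigl[\barF(s)/f(s)\bigr]\le 1-\alpha$. This is exactly the statement that the generalized-Pareto quantity $\bar\Gamma_\alpha^{-1}(\barF(s)/\barF(a))$ is convex in $s$, since $\bar\Gamma_\alpha$ attains equality in the hazard-rate ODE. I will verify this transformation directly.

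Convexity between the knots $s_i$ and $s_{i+1}$, both of which $F$ passes through, gives $\bar\Gamma_\alpha^{-1}(\barF(v)/q_i)\le$ the chord. This yields $\barF(v)\ge\bar G_{\alpha,i}(v)=\underline G^Q_{\alpha,i}(v)$. Outside a pair of knots, convexity gives the reverse inequality. Applying it with the neighboring pairs $(s_{i-1},s_i)$ and $(s_{i+1},s_{i+2})$ and extrapolating into $[s_i,s_{i+1}]$ yields $\barF\le\bar G_{\alpha,i\pm1}$, hence $\barF\le\overline G^Q_{\alpha,i}$.

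For DI, the first-order condition fixes the slope of the convex function at $s_i$. Since $\barF'(s_i)=-q_i/(s_i-w_i)$, that slope is $1/(s_i-w_i)$ in the normalized scale. Convexity then places the function above its tangent line, which gives $\barF\le\bar H_{\alpha,i}$ to the right of $s_i$ and $\barF\le\bar H_{\alpha,i+1}$ to the left of $s_{i+1}$. Hence $\underline G^Q_{\alpha,i}\le\barF\le\overline G^K_{\alpha,i}$ on each interval.

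\emph{Sufficiency.} Assuming the band inequality holds, I will construct a witness. The candidate is the piecewise curve equal to $\bar G_{\alpha,i}$ on each $[s_i,s_{i+1}]$, extended by generalized-Pareto tails outside $[s_1,s_T]$. In the transformed coordinates this curve is piecewise linear through the knots. The band condition, specifically $\bar G_{\alpha,i}\le\bar G_{\alpha,i\pm1}$ near the shared knots, says precisely that consecutive chord slopes are nondecreasing. The curve is therefore convex in the appropriate normalized sense, so it is $\alpha$-regular with kinks allowed. I will then smooth it, or interpret $f$ through one-sided derivatives, to obtain a genuine element of $\calF^Q_\alpha$.

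The main obstacle is sufficiency for $K=DI$. At each knot $s_i$ the witness must additionally have slope exactly $-q_i/(s_i-w_i)$, so that $\phi_F(s_i)=w_i$. My plan is to replace the piecewise-chord witness near each knot by a convex function with a prescribed subgradient at $s_i$. This requires the prescribed slope to lie between the left and right chord slopes at $s_i$, and I will show that this requirement is equivalent to $\bar G_{\alpha,i}\le\bar H_{\alpha,i}$ on $[s_i,s_{i+1}]$ together with $\bar G_{\alpha,i-1}\le\bar H_{\alpha,i}$ on $[s_{i-1},s_i]$, both contained in the hypothesis.

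Care is needed because the band inequality must be read across both intervals adjacent to $s_i$, and because the transformed coordinate is normalized by $q_i$ in one case and by $q_{i-1}$ in the other. I will handle this by working throughout with the single function $\bar\Gamma_\alpha^{-1}(\barF(\cdot))$, using the semigroup-type identity of $\bar\Gamma_\alpha$ to change base points. If the kink at a knot cannot be smoothed while keeping the exact slope, I will take a convex piecewise-linear function with a genuine tangent segment at $s_i$, which keeps the derivative well defined there.
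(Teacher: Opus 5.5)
Your proposal is correct and follows essentially the paper's proof. The paper passes to the concave transformed inverse demand $v_\alpha$ of Proposition~\ref{prop:transform}, where every generalized-Pareto boundary is affine, the Q band becomes the nonincreasing-secant condition \eqref{eq:Q-feas}, the DI band becomes the tangent brackets \eqref{eq:DI-feas}, and sufficiency comes from a piecewise-linear or Hermite concave interpolant plus smoothing; your convex map $s\mapsto\bar\Gamma_\alpha^{-1}(\barF(s))$ is simply the inverse of $v_\alpha$, with chord and tangent slopes $1/D_i$ and $1/m_i$.

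One caveat you share with the paper: in equality cases such as $m_i>D_i=m_{i+1}$, no curve can have both prescribed derivatives exactly, so your tangent-segment fallback fails there. As the paper does by passing to the closure, you must read $\phi_F(s_i)=w_i$ as the retailer-optimality (subgradient) condition $s_i\in\mathcal S_F(w_i)$, which your unmodified piecewise witness already satisfies.
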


The proposition is a data-quality check: Q asks whether the observed outcomes can lie on one $\alpha$-regular demand curve; DI additionally asks whether that curve can explain the wholesale-price context of every observed retail price.

\subsection{Unique retailer responses when \texorpdfstring{$\alpha>0$}{alpha > 0}}

Having characterized the demand curves that remain plausible, we next ask how they respond to a new wholesale price $w_0$. When $\alpha>0$, each fixed demand curve produces one retailer-optimal price. The supplier nevertheless faces a range of possible prices and demands because the true curve is unknown. We therefore work with the set of demand levels generated by all admissible retailer responses:
\begin{equation}
\calQ_\alpha^K(w_0)=\{\bar F(r):F\in\calF_\alpha^K(\calH),\ r\in\mathcal S_F(w_0)\},\qquad
q_{\min,\alpha}^K(w_0)=\inf\calQ_\alpha^K(w_0),
\label{eq:response-set}
\end{equation}
for $K\in\{Q,DI\}$.

Not every point inside a historical demand band can be a retailer response. A candidate pair $(r,q)$ must also make $r$ optimal under $w_0$. The following lemma shows that this requirement generates a boundary of the same generalized-Pareto form used above.

\begin{lemma}[Candidate-response boundary]
\label{lem:response-boundary}
If $\bar F(r)=q$ and $r\in\mathcal S_F(w_0)$, then
\begin{equation}
\bar F(v)\le\bar H_\alpha(v\mid r,q,w_0).
\label{eq:current-response-gp}
\end{equation}
For $\alpha>0$, $r$ is the only price on this boundary that is optimal for $w_0$.
\end{lemma}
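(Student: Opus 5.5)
The plan is to turn the retailer's first-order condition at $r$ into a pointwise bound on the hazard rate $h(v)=f(v)/\barF(v)$, using $\alpha$-regularity, and then to integrate that bound. Throughout, $F$ is $\alpha$-regular, as in every ambiguity set considered. I take $r>w_0$ to be an interior optimum; boundary responses are treated as limits, following the convention stated after \eqref{eq:retailer}.

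\textbf{Step 1 (first-order condition at $r$).} Differentiating the retailer objective gives
\[
\frac{d}{ds}\big[(s-w_0)\barF(s)\big]=\barF(s)-(s-w_0)f(s)=f(s)\big(w_0-\phi_F(s)\big).
\]
At the interior optimum $r$ this vanishes, so $\phi_F(r)=w_0$, that is, $\barF(r)/f(r)=r-w_0$. Since $\phi_F=\psi_\alpha+\alpha s$, we also get $\psi_\alpha(r\mid F)=(1-\alpha)r-(r-w_0)$.

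\textbf{Step 2 (hazard bound from $\alpha$-regularity).} Because $\psi_\alpha(\cdot\mid F)$ is nondecreasing, for $v\ge r$ we have $(1-\alpha)v-1/h(v)\ge(1-\alpha)r-(r-w_0)$. Rearranging gives
\[
\frac{1}{h(v)}\le D(v):=(r-w_0)+(1-\alpha)(v-r).
\]
Here $D(v)>0$ on $[r,\infty)$, so $h(v)\ge 1/D(v)$. Integrating $-\frac{d}{dv}\log\barF=h$ from $r$ to $v$ yields
\[
\log\frac{\barF(v)}{q}\le-\int_r^v\frac{dt}{D(t)}=-\frac{1}{1-\alpha}\log\Big(1+(1-\alpha)\tfrac{v-r}{r-w_0}\Big).
\]
This is exactly $\barF(v)\le q\bar\Gamma_\alpha((v-r)/(r-w_0))=\bar H_\alpha(v\mid r,q,w_0)$. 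For $\alpha=1$ the integral is $(v-r)/(r-w_0)$, which gives the exponential case.

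For $v<r$ the monotonicity reverses: $1/h(t)\ge D(t)$ for $t\in[v,r]$. Wherever $D>0$ on $[v,r]$, this gives $h\le 1/D$, and integrating from $v$ to $r$ gives $\log(\barF(v)/q)\le\int_v^r dt/D(t)$, which is the same closed form. Where $D(t)\le0$, the point lies outside the natural domain of $\bar\Gamma_\alpha$; there the boundary is read as $+\infty$ (or at least $1$), and the inequality is trivial.

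I expect the main obstacle to be the technical care in this step. It requires $f>0$ and enough regularity for the hazard to be integrated, e.g.\ an absolutely continuous $\log\barF$. It also requires a consistent convention for $\bar\Gamma_\alpha$ outside its natural domain. I would handle the smoothness by applying the argument on the support where $\barF>0$ and passing to limits at endpoints.

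\textbf{Step 3 (uniqueness for $\alpha>0$).} I would show that under the boundary curve $\bar H:=\bar H_\alpha(\cdot\mid r,q,w_0)$ itself, $r$ is the unique optimal price. For $\bar H$, $1/h_H(v)=D(v)$ identically, so $\psi_\alpha(v\mid H)=(1-\alpha)r-(r-w_0)$ is constant. Consequently $\phi_H(v)=\psi_\alpha(v\mid H)+\alpha v$ is strictly increasing when $\alpha>0$, and $\phi_H(r)=w_0$. By the derivative formula in Step 1, the objective $(s-w_0)\bar H(s)$ strictly increases on $(w_0,r)$ and strictly decreases on $(r,\infty)$. Hence $r$ is the only maximizer.

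Equivalently, any other point $(v,\bar H(v))$ with $v\ne r$ satisfies $\phi_H(v)\ne w_0$, so it fails the first-order condition and cannot be optimal for $w_0$. When $\alpha=0$, $\phi_H$ is constant and equal to $w_0$, so the whole boundary ties. This is why the uniqueness claim is restricted to $\alpha>0$.
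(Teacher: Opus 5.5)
Your proof is correct and follows essentially the same route as the paper. Your bound $1/h(v)\le D(v)$ is exactly the paper's comparison $\phi_F(v)\ge\phi_H(v)=w_0+\alpha(v-r)$, integrated via $\frac{d}{dv}\log\bar F(v)=-1/(v-\phi_F(v))$ on each side of $r$, and your uniqueness step is the paper's observation that $\phi_H(v)=w_0$ only at $v=r$ when $\alpha>0$.
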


Fix $r\in[s_i,s_{i+1}]$. The response boundary in Lemma~\ref{lem:response-boundary} must remain high enough to accommodate the observations on both sides of $r$. Let $L_{\alpha,i}(r;w_0)$ and $R_{\alpha,i+1}(r;w_0)$ be the smallest candidate demands for which that boundary reaches the left and right observations, respectively. Combining these two requirements with ordinary demand feasibility gives
\begin{subequations}
\begin{align}
\bar H_\alpha(s_i\mid r,L_{\alpha,i}(r;w_0),w_0)&=q_i,
\label{eq:left-response-boundary-main}\\
\bar H_\alpha(s_{i+1}\mid r,R_{\alpha,i+1}(r;w_0),w_0)&=q_{i+1},
\label{eq:right-response-boundary-main}\\
\underline R_{\alpha,i}^Q(r;w_0)
&=\max\{\bar G_{\alpha,i}(r),L_{\alpha,i}(r;w_0),R_{\alpha,i+1}(r;w_0)\}.
\label{eq:q-response-lower-main}
\end{align}
\end{subequations}
The three terms respectively enforce ordinary demand feasibility and compatibility with the observations on the left and right. The corresponding upper boundary is $\overline G_{\alpha,i}^K(r)$.

For exact complete-history feasibility, define
\begin{equation}
\mathcal C_{\alpha,i}^K(w_0)
=\{(r,\bar F(r)):F\in\calF_\alpha^K(\calH),\ r\in\mathcal S_F(w_0)\cap[s_i,s_{i+1}]\}.
\label{eq:q-local-response-region-main}
\end{equation}
Every point in this set lies between the lower and upper boundaries above; the full historical record determines which boundary-feasible points can be completed into one demand curve. Define the one-sided sets $\mathcal C_{\alpha,0}^K(w_0)$ and $\mathcal C_{\alpha,T}^K(w_0)$ analogously for $w_0\le r\le s_1$ and $r\ge\max\{s_T,w_0\}$.

The resulting response region combines three filters: consistency with the observed demand points, optimality under the new wholesale price, and consistency with the complete transaction history. Figure~\ref{fig:alpha-positive-response} illustrates these filters in one historical interval. The light-blue region contains the responses retained by Q, whereas DI removes the portion that cannot explain the historical pricing decisions.

\begin{figure}[H]
\centering
\includegraphics[width=0.98\textwidth]{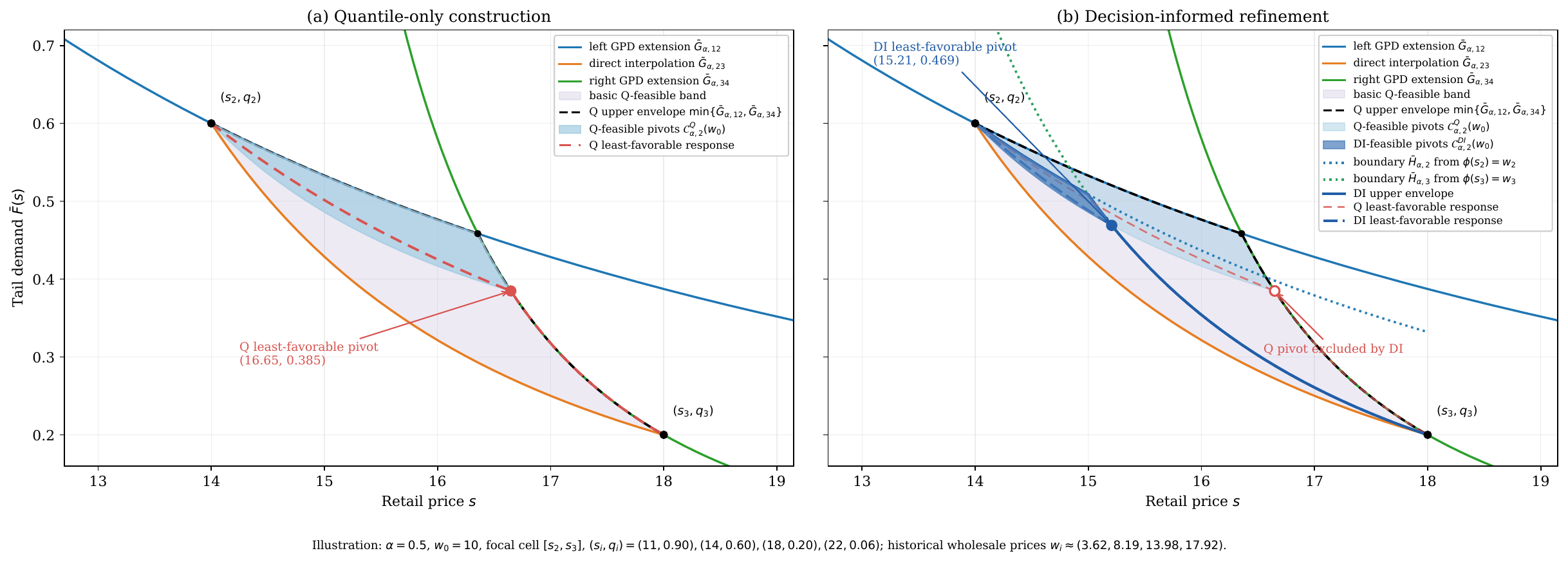}
\caption{Worst-case retailer responses when $\alpha>0$. The light-blue region contains response points that remain possible under Q; the dark-blue region contains those that also remain possible under DI. DI removes low-demand responses that fit the sales observations but cannot explain the retailer's earlier pricing choices.}
\label{fig:alpha-positive-response}
\end{figure}

Figure~\ref{fig:alpha-positive-response} focuses on one interval. Repeating the same construction over all historical intervals and the two outer regions produces a finite collection of response sets. Nature's infinite-dimensional distribution search can then be summarized by the lowest demand across these sets.

\begin{theorem}[Worst-case response when $\alpha>0$]
\label{thm:insertion}
If $\calF_\alpha^K(\calH)\neq\varnothing$, then
\begin{equation}
q_{\min,\alpha}^K(w_0)
=\min_{i=0,\ldots,T}\ \inf_{(r,q)\in\mathcal C_{\alpha,i}^K(w_0)}q,
\qquad K\in\{Q,DI\},
\label{eq:gp-response-reduction}
\end{equation}
with empty cells ignored. An attained worst case is piecewise $\bar G_\alpha$; otherwise such curves approach the same infimum.
\end{theorem}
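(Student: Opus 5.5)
The identity \eqref{eq:gp-response-reduction} is essentially a partition argument, so I would prove the two inequalities separately after noting that the sets $\mathcal C_{\alpha,i}^K(w_0)$, $i=0,\ldots,T$, cover all admissible responses. Since $\alpha>0$, every $F\in\calF_\alpha^K(\calH)$ has a unique optimizer $r_F(w_0)\in\mathcal S_F(w_0)$, and $r_F(w_0)\ge w_0$. The intervals $[w_0,s_1]$, $[s_i,s_{i+1}]$ for $i=1,\ldots,T-1$, and $[\max\{s_T,w_0\},\infty)$ cover $[w_0,\infty)$. Hence each pair $(r_F(w_0),\bar F(r_F(w_0)))$ lies in at least one cell $\mathcal C_{\alpha,i}^K(w_0)$; a shared endpoint $s_i$ may put it in two cells, which is harmless. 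It follows that
\[
\calQ_\alpha^K(w_0)=\bigcup_{i=0}^T\{q:(r,q)\in\mathcal C_{\alpha,i}^K(w_0)\}.
\]
Taking infima of a finite union gives the minimum over $i$ of the cellwise infima. Empty cells contribute $+\infty$ and can be ignored. Nonemptiness of $\calF_\alpha^K(\calH)$ guarantees that at least one cell is nonempty, so the minimum is finite. This direct set identity already proves \eqref{eq:gp-response-reduction}; the rest of the proof concerns the structural claim about worst cases.

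For the second claim I would take a cell index $i^\ast$ attaining the minimum and a point $(r,q)\in\mathcal C_{\alpha,i^\ast}^K(w_0)$ with $q$ equal to, or approaching, the infimum. The witnessing curve $F$ is then replaced by a piecewise generalized-Pareto curve $\tilde F$ built as follows.
\begin{itemize}
\item Near $r$, use the response boundary $\bar H_\alpha(\cdot\mid r,q,w_0)$ from Lemma~\ref{lem:response-boundary}.
\item Between consecutive observations, use the interpolants $\bar G_{\alpha,j}$ of \eqref{eq:gpd-interpolation-main}, switching to the boundaries $\bar H_{\alpha,j}$ of \eqref{eq:historical-gp-boundary} when $K=DI$.
\item Join the pieces at the crossing points dictated by the lower and upper envelopes.
\end{itemize}

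Four properties of $\tilde F$ then have to be checked.
\begin{enumerate}
\item \emph{Interpolation.} $\tilde F$ passes through every $(s_j,q_j)$. This holds by construction of $\bar G_{\alpha,j}$ and by the defining equations \eqref{eq:left-response-boundary-main}--\eqref{eq:right-response-boundary-main}.
\item \emph{$\alpha$-regularity.} Each generalized-Pareto piece has $\psi_\alpha$ affine with slope zero, so it binds the shape restriction. I would then argue that taking pointwise minima of such pieces only produces upward jumps in $\psi_\alpha$. This is the standard argument of \citet{GeHeWangWang2025}, and I would invoke it rather than redo it.
\item \emph{Optimality of $r$.} $\tilde F$ stays below $\bar H_\alpha(\cdot\mid r,q,w_0)$ and touches it at $r$. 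Lemma~\ref{lem:response-boundary} then makes $r$ the unique optimizer under $w_0$.
\item \emph{Historical first-order conditions (DI only).} Each $\phi_{\tilde F}(s_j)=w_j$ must hold. The local piece at $s_j$ is tangent to $\bar H_{\alpha,j}$, since both share the slope $-q_j/(s_j-w_j)$.
\end{enumerate}

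The main obstacle is the fourth check together with global compatibility. Replacing $F$ by $\tilde F$ must not break feasibility in cells other than $i^\ast$. I expect to handle this through Proposition~\ref{prop:history-feasible}. The lower boundary $\underline G^Q$ lies below the upper boundary $\overline G^K$ everywhere, and inserting the new response boundary only lowers the upper envelope on $[s_{i^\ast},s_{i^\ast+1}]$. Membership $(r,q)\in\mathcal C_{\alpha,i^\ast}^K$ ensures that the lowered envelope still dominates the lower band, so the envelope-feasibility criterion still holds.

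When the infimum is not attained, for example because it requires $r\to s_j$ or a boundary response with $r=w_0$, I would take a sequence $(r_n,q_n)$ in the cell and build the curves $\tilde F_n$ as above. This gives piecewise-$\bar G_\alpha$ feasible curves whose response demands $q_n$ converge to the infimum, which is the "otherwise" clause of the theorem.
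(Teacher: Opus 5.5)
Your first paragraph is correct, and it is a more direct route than the paper's. Each $\mathcal C_{\alpha,i}^K(w_0)$ is defined through complete-history feasibility, so the cells cover $\calQ_\alpha^K(w_0)$ and \eqref{eq:gp-response-reduction} is a set identity. The paper instead works in $x=x_\alpha(q)$. There it shows that cell membership is equivalent to the insertion inequalities \eqref{eq:Q-insertion}--\eqref{eq:DI-insertion}, and obtains $q_{\min,\alpha}^K(w_0)=Q_\alpha(\sup\mathcal X_\alpha^K(w_0))$. That buys a finite description of each cell, which the displayed identity does not need but the structural claim does.

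The gap is at the step you flag as the main obstacle, and Proposition~\ref{prop:history-feasible} cannot close it. That proposition is an existence criterion for $\calF_\alpha^K(\calH)$ with the historical envelopes. It says nothing about whether a \emph{particular} curve lying between the bands is $\alpha$-regular or makes $r$ optimal. A band test with a modified upper envelope is also not an instance of it; an extra upper bound cannot even force $\bar F(r)=q$.

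What makes your splice valid is slope ordering at its junctions. In the transformed coordinate:
\begin{itemize}
\item $\alpha$-regularity is concavity of $v_\alpha$;
\item every generalized-Pareto piece is affine;
\item $r\in\mathcal S_F(w_0)$ means $m\in\partial v_\alpha(x)$ with $r=w_0+[1+(1-\alpha)x]m$.
\end{itemize}
Given historical feasibility, the splice is a concave interpolant through all nodes with $m\in\partial v_\alpha(x)$ exactly when $U_i^Q\ge D_L\ge m\ge D_R\ge L_{i+1}^Q$. Under DI the chain is $m_i\ge D_L\ge m\ge D_R\ge m_{i+1}$. The inner inequalities say $q\ge\max\{L_{\alpha,i}(r;w_0),R_{\alpha,i+1}(r;w_0)\}$. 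The outer ones say $q\le\overline G_{\alpha,i}^K(r)$; this is where the neighbouring cells enter, so the insertion is not a purely local lowering of one envelope.

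Any witness $F$ satisfies this chain, because secant slopes of a concave function are nonincreasing and bracket its supergradients. With the chain, your checks go through. The spliced $v_\alpha$ is the minimum of its affine pieces, and $m\in\partial v_\alpha(x)$ gives $\tilde F\le\bar H_\alpha(\cdot\mid r,q,w_0)$ globally. Without the chain, checks 2--4 are only assertions. For DI, exact $\phi_{\tilde F}(s_j)=w_j$ may hold only in the paper's closure sense when some of these inequalities bind.

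Two smaller points.
\begin{itemize}
\item In check 1, \eqref{eq:left-response-boundary-main}--\eqref{eq:right-response-boundary-main} hold only at threshold values of $q$. For larger $q$ the response boundary lies strictly above $\bar G_{\alpha,i^\ast}$ on the whole cell, so there is no crossing with it. The connectors must therefore be the neighbouring upper-envelope pieces, or new chords through $(r,q)$ as in \eqref{eq:q-gp-composite}.
\item In check 3, Lemma~\ref{lem:response-boundary} is the converse of what you need. Argue instead that $(v-w_0)\bar H_\alpha(v\mid r,q,w_0)$ is uniquely maximized at $v=r$, since its pricing index is $w_0+\alpha(v-r)$. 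Then $\tilde F\le\bar H_\alpha(\cdot\mid r,q,w_0)$ with equality at $r$ makes $r$ the unique optimizer.
\end{itemize}
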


The theorem reduces Nature's search over all demand distributions to a finite collection of response regions. The lowest point in those regions is the supplier's guaranteed demand at $w_0$.

\subsection{Tied retailer responses when \texorpdfstring{$\alpha=0$}{alpha = 0}}
\label{subsec:alpha0-uniform}

The response object changes at $\alpha=0$. Because the retailer pricing index need only be nondecreasing, one demand curve may make a range of retail prices equally profitable. The supplier must therefore retain the demand variation created by the tie-breaking rule rather than summarize the response by one lowest demand. If the common retailer profit is $A>0$, the tied prices lie on $\bar G_0(s;A,w)=A/(s-w)$. For $K\in\{Q,DI\}$, define the feasible part of that curve by
\begin{equation}
I_A^K(w)=\left\{s>w:\underline G_0^Q(s)\le\frac{A}{s-w}\le\overline G_0^K(s)\right\}.
\label{eq:alpha0-contact-set-main}
\end{equation}
Each connected interval $[\ell,u]\subseteq I_A^K(w)$ is a candidate tied-price range. Under Assumption~\ref{ass:uniform-selection}, $S\sim\operatorname{Unif}[\ell,u]$ and $Y=A/(S-w)$. Writing $a=A/(u-w)$ and $b=A/(\ell-w)$,
\begin{equation}
f_Y(y\mid a,b)=\frac{ab}{(b-a)y^2},\qquad y\in[a,b],
\label{eq:alpha0-density-main}
\end{equation}
when $a<b$, with a point mass at $a$ when $a=b$.

As $A$ changes, the active historical boundaries switch only finitely many times. Let $\mathfrak B^K(w)$ index the resulting connected response families, and let $A\in[\underline A_\beta^K(w),\overline A_\beta^K(w)]$ within family $\beta$. For stock $z$, write $h_\beta(z;A,w)=\E[\min\{z,Y_{\beta,A}\}]$ and $\underline h_0^K(z,w)=\inf_{F\in\calF_0^K(\calH)}\E[\min\{z,Y_F(w)\}]$.

\begin{figure}[H]
\centering
\includegraphics[width=0.98\textwidth]{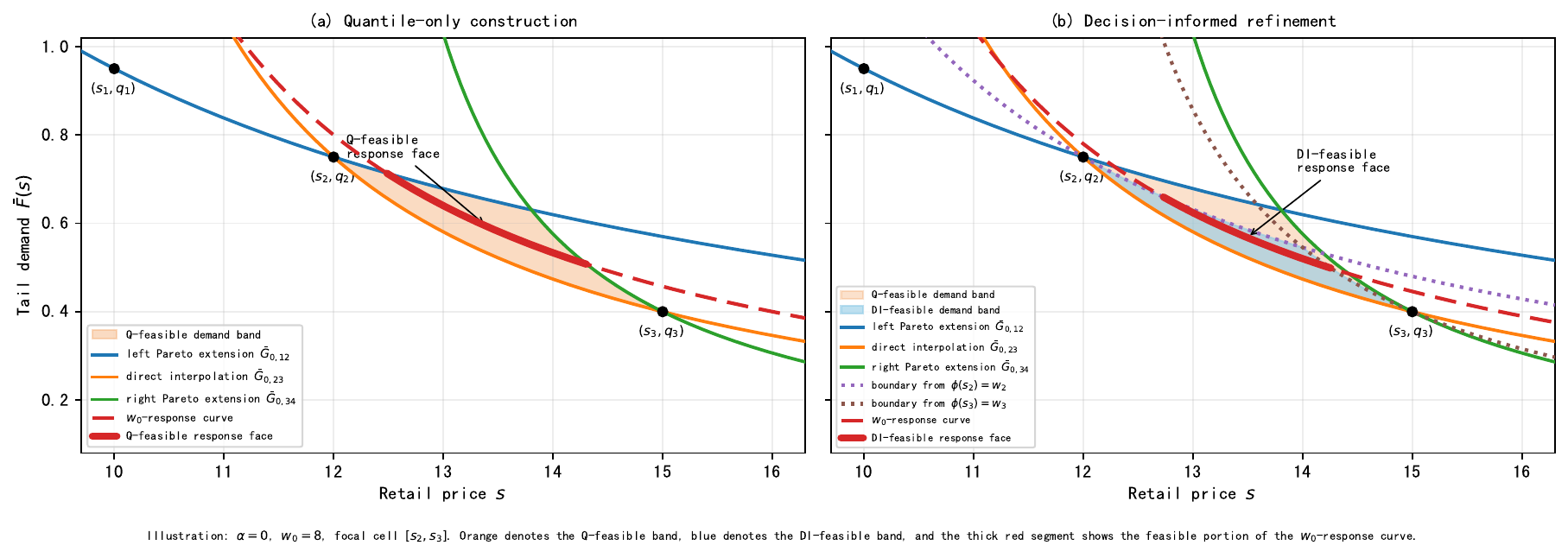}
\caption{Tied retailer responses when $\alpha=0$. For fixed $w_0$, the red curve collects prices that yield the same retailer profit. Its segment inside the Q or DI demand band is a feasible tied-price range. DI narrows that range by using the wholesale-price context of the historical decisions.}
\label{fig:alpha0-response}
\end{figure}

As $A$ varies, these response curves sweep through the demand band. The active historical bounds can change, but only finitely many times. Within each resulting response family, expected fulfilled demand is concave in $A$, so Nature's minimum occurs at an endpoint. The next theorem formalizes this reduction.

\begin{theorem}[Worst-case response when $\alpha=0$]
\label{thm:alpha0-endpoint}
For $K\in\{Q,DI\}$ and every fixed $z,w$,
\begin{align}
\underline h_0^K(z,w)
&=\min_{\beta\in\mathfrak B^K(w)}
\inf_{A\in[\underline A_\beta^K(w),\overline A_\beta^K(w)]}h_\beta(z;A,w),
\label{eq:alpha0-extremal-equivalence}\\
&=\min_{\beta\in\mathfrak B^K(w)}
\min\{h_\beta(z;\underline A_\beta^K(w),w),
       h_\beta(z;\overline A_\beta^K(w),w)\}.
\label{eq:alpha0-endpoint-reduction}
\end{align}
Thus, only finitely many endpoint tied-price cases need be checked.
\end{theorem}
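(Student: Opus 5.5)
The proof has two parts. The first identity \eqref{eq:alpha0-extremal-equivalence} reduces Nature's problem over all of $\calF_0^K(\calH)$ to the parametrized tied-price families. The second identity \eqref{eq:alpha0-endpoint-reduction} is then a concavity statement inside each family.

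\emph{First identity, direction $\ge$.} Take any $F\in\calF_0^K(\calH)$ and let $A_F=\max_{s\ge w}(s-w)\barF(s)>0$ be its retailer profit at $w$. The optimal set $\mathcal S_F(w)=[\ell_F(w),u_F(w)]$ consists of the prices where $\barF(s)=A_F/(s-w)$. Because $F$ lies in the band (Proposition~\ref{prop:history-feasible}), $\underline G_0^Q\le\barF\le\overline G_0^K$ pointwise. Hence $[\ell_F,u_F]$ is a connected subset of $I_{A_F}^K(w)$, and so it lies in some family $\beta$ with $A_F\in[\underline A_\beta^K,\overline A_\beta^K]$.

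The realized tie interval need not be a \emph{maximal} connected component. I expect this to be the main obstacle, and I would handle it in one of two ways.
\begin{itemize}
\item Show that when $\barF$ touches the hyperbola on $[\ell,u]$, the $\alpha=0$ regularity of $F$ together with the band geometry forces $[\ell,u]$ to be exactly a component.
\item Alternatively, define the families $\beta$ over all admissible subintervals, so that the endpoints $\ell,u$ become two more boundary-switch parameters. This still gives finitely many families, because the boundaries are piecewise generalized-Pareto.
\end{itemize}
Once the interval is matched to a family, Assumption~\ref{ass:uniform-selection} gives $Y_F(w)\sim Y_{\beta,A_F}$. So $\E[\min\{z,Y_F(w)\}]=h_\beta(z;A_F,w)$, which yields $\ge$.

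\emph{First identity, direction $\le$.} Fix $\beta$ and $A$ in its range, with tie interval $[\ell,u]\subseteq I_A^K(w)$. I would construct an $F\in\calF_0^K(\calH)$ with $\barF=A/(s-w)$ on $[\ell,u]$ and $\barF(s)<A/(s-w)$ elsewhere. Since the hyperbola has $\psi_0\equiv w$ constant, it is a valid $0$-regular piece. Outside $[\ell,u]$, the curve is completed by generalized-Pareto interpolations through the observations, exactly as in the completions behind Proposition~\ref{prop:history-feasible} and Theorem~\ref{thm:insertion}.

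Three properties of this completion must be checked:
\begin{enumerate}
\item $\psi_0$ is monotone across the junctions, because the hyperbola pieces bind at $\psi_0=w$.
\item The DI conditions $\phi_F(s_i)=w_i$ are preserved. This holds because the pieces respect $\bar H_{0,i}$ through \eqref{eq:di-upper-envelope}.
\item The fit is strict outside $[\ell,u]$. Where strictness fails only in a limit, I would pass to limits using continuity of $h_\beta$ in $(\ell,u)$. This is why the statement is written with $\inf$.
\end{enumerate}

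\emph{Second identity.} Parametrize $S=\ell+(u-\ell)U$ with $U\sim\operatorname{Unif}[0,1]$. Then $Y=A/(S-w)$ and
\[
h_\beta(z;A,w)=\E\bigl[\min\{z,\,A/(\ell(A)+(u(A)-\ell(A))U-w)\}\bigr].
\]
Within a family, $\ell(A)$ and $u(A)$ are the intersections of the hyperbola $A/(s-w)$ with the two fixed active boundary curves. I would show that $A\mapsto A/(\ell(A)+(u(A)-\ell(A))U-w)$ is concave for each fixed $U$.

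Equivalently, $g(A)=\ell(A)+(u(A)-\ell(A))U-w$ over $A$ is concave. Both $\ell(A)-w$ and $u(A)-w$ solve $A=(s-w)B(s)$ for a generalized-Pareto boundary $B$. With $\alpha=0$, $(s-w)B(s)$ is explicit, so the derivatives can be computed. Given concavity, $\min\{z,\cdot\}$ preserves it, and so does taking expectation. A concave function on a compact interval attains its infimum at an endpoint, which gives \eqref{eq:alpha0-endpoint-reduction}. Finiteness of $\mathfrak B^K(w)$ comes from the finitely many boundary switches noted before the theorem.

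If full concavity of the quotient fails, I would fall back to quasi-concavity. It is enough to show that $h_\beta$ has no interior strict local minimum, which can be done by checking the sign of its derivative.
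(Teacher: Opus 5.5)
Your second identity is essentially sound, but the ``$\ge$'' half of the first identity has a genuine gap.

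\textbf{The gap in ``$\ge$''.} You identify each feasible $F$ with the family member at the same level $A_F$, claiming $\E[\min\{z,Y_F(w)\}]=h_\beta(z;A_F,w)$. This is false, and your repair (a) is false too: a feasible curve can touch the tie curve on any sub-interval of the component, including a single point.

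Here is a concrete case. Take an interior DI cell whose active branches are $\bar H_{0,i}$ and $\bar H_{0,i+1}$, so $w_i<w<w_{i+1}$. In the transformed coordinates the tie curve is the line $L_A(x)=w+A(1+x)$, and these branches are the historical tangents $L_i$ and $L_{i+1}$. Fix $A>(s_i-w)q_i$ in the component and let $x_u$ be the abscissa of $L_A\cap L_{i+1}$, which corresponds to price $u_\beta(A)$. Define $v_0$ as follows:
\begin{itemize}
\item the chord from $(x_i,s_i)$ to the point $L_A\cap L_{i+1}$;
\item then $L_{i+1}$ up to $x_{i+1}$;
\item any DI-feasible curve outside the cell.
\end{itemize}
The chord slope is at most $m_i$, because the endpoint lies on or below $L_i$. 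It exceeds $A$, because $(x_i,s_i)$ lies strictly below $L_A$. Also $A>m_{i+1}$, since $a_\beta(A)>0$. So the curve is increasing and concave and satisfies every DI bracket, in the paper's closure. Outside the cell it lies below $\min\{L_i,L_{i+1}\}$, so it touches $L_A$ only at $x_u$.

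This curve's response is the point mass at $a_\beta(A)$. Its expected sales are therefore $\min\{z,a_\beta(A)\}$, which is strictly below $h_\beta(z;A,w)$ whenever $a_\beta(A)<\min\{z,b_\beta(A)\}$. So even the weaker inequality $\E[\min\{z,Y_F\}]\ge h_\beta(z;A_F,w)$ fails.

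Repair (b) does not help either. Arbitrary sub-interval endpoints are continuous parameters, not boundary switches, so you lose finiteness. Your endpoint argument in $A$ alone then says nothing about minimizing over them.

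\textbf{The missing idea.} You need domination rather than identification; this is how the paper proves the finite-family step. Couple the tie-breaking through $S=\ell+(u-\ell)U$. Deleting prices at the low end of a tie interval, or adding prices at its high end, makes $S$ stochastically larger. Since $A/(S-w)$ is decreasing in $S$, this can only lower $\E[\min\{z,Y\}]$.

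Nature may therefore move tie endpoints until they are genuinely pinned, leaving finitely many pinning configurations. Your stitching construction then supplies the converse. Carry this out carefully, though. Lying on an upper historical boundary does not by itself pin a left endpoint, because the curve can leave that boundary earlier, as in the example above. The indexed families must therefore contain the resulting sub-interval and point laws, such as the point mass at $a_\beta(A)$, and not only the full contact intervals $[\ell_\beta(A),u_\beta(A)]$.

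\textbf{The second identity.} Given such one-parameter families, your route is correct and shorter than the paper's. The paper computes $g_z''(a)$ in closed form and proves $N(r,x,\kappa)\ge0$. Instead, note that for fixed $U$,
$Y=[(1-U)/b_\beta(A)+U/a_\beta(A)]^{-1}$
is a weighted harmonic mean of two affine functions of $A$, hence concave in $A$. Both $\min\{z,\cdot\}$ and $\E_U$ preserve concavity. Point-law families give $\min\{z,a_\beta(A)\}$, which is trivially concave.

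Your stated ``equivalent'' reformulation is not equivalent, however. Both $g(A)$ and $g(A)/A$ are convex here. What you need is concavity of $A/g(A)$ itself, which is exactly the harmonic-mean fact.
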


Theorems~\ref{thm:insertion} and~\ref{thm:alpha0-endpoint} provide parallel demand summaries: a lowest feasible demand when the retailer has a unique best price, and a finite list of expected-sales cases when several prices tie. Since $\calF_\alpha^{DI}(\calH)\subseteq\calF_\alpha^Q(\calH)$, DI also yields $\mathfrak Y_\alpha^{DI}(w)\subseteq\mathfrak Y_\alpha^Q(w)$ and, for $\alpha>0$, $q_{\min,\alpha}^{DI}(w)\ge q_{\min,\alpha}^{Q}(w)$. If $0<\alpha_1\le\alpha_2\le1$, then $\calF_{\alpha_2}^K(\calH)\subseteq\calF_{\alpha_1}^K(\calH)$ and $q_{\min,\alpha_2}^{K}(w)\ge q_{\min,\alpha_1}^{K}(w)$.

\section{Imperfect Historical and Current Retailer Decisions}
\label{sec:noise}

Exact DI assumes that every observed retail price was optimal for its historical wholesale price and that the retailer will respond optimally to a new contract. We distinguish two reasons why this may fail.

\begin{table}[H]
\centering
\caption{Two interpretations of imperfect retailer prices}
\label{tab:error-models}
\small
\begin{tabular}{L{0.16\textwidth}L{0.25\textwidth}L{0.24\textwidth}L{0.21\textwidth}}
\toprule
Model & What may be wrong? & What remains fixed? & Typical interpretation\\
\midrule
Condition error (CE) & The link between the observed retail price and wholesale price may be imperfect & The observed demand point $(s_i,q_i)$ & Approximate optimization, margin misperception, or model mismatch\\
\addlinespace
Price error (PE) & The observed retail price may differ from a nearby exact optimum & The observed demand point $(s_i,q_i)$; the exact optimum is unobserved & Measurement error, adjustment frictions, or boundedly rational pricing\\
\bottomrule
\end{tabular}
\end{table}

CE changes the pricing condition at the observed price. PE moves the unobserved exact optimum away from the observed price. The two tolerances therefore have different units and economic meanings.

\subsection{Condition error (CE)}
\label{subsec:foc-error}

CE is appropriate when the observed retail price and its sales outcome are trusted, but the link between that price and the historical wholesale incentive may be imperfect. We therefore keep every demand point $(s_i,q_i)$ fixed and relax only the pricing condition that DI imposes at that point. For historical tolerances $\boldsymbol\delta=(\delta_1,\ldots,\delta_T)$, define
\begin{equation}
\calF_\alpha^{\CE,\boldsymbol\delta}(\calH)
=\{F\in\calF_\alpha^Q(\calH):|\phi_F(s_i)-w_i|\le\delta_i\ \forall i\}.
\label{eq:foc-error-set}
\end{equation}
If $\boldsymbol\delta\le\boldsymbol\delta'$ componentwise, then
\begin{equation}
\calF_\alpha^{DI}(\calH)
\subseteq\calF_\alpha^{\CE,\boldsymbol\delta}(\calH)
\subseteq\calF_\alpha^{\CE,\boldsymbol\delta'}(\calH)
\subseteq\calF_\alpha^Q(\calH).
\label{eq:foc-error-nesting}
\end{equation}

Using the general boundary in \eqref{eq:historical-gp-boundary}, the least restrictive CE boundary at transaction $i$ is
\begin{equation}
\bar H_{\alpha,i}^{\CE}(v;\delta_i)
=\max_{\eta\in[w_i-\delta_i,w_i+\delta_i]}
\bar H_\alpha(v\mid s_i,q_i,\eta).
\label{eq:ce-boundary-main}
\end{equation}
The CE upper demand boundary is obtained by taking the minimum of the Q upper boundary and the two neighboring CE boundaries. As $\delta_i$ increases, this restriction weakens and eventually coincides with Q. When $\alpha=0$, these boundaries remain piecewise Pareto, so Theorem~\ref{thm:alpha0-endpoint} applies with CE in place of Q or DI.

For a new wholesale price $w_0$, let the retailer behave as if its pricing target were $\theta_0\in[w_0-\epsilon_0,w_0+\epsilon_0]$, while the supplier still receives $w_0$. Conditional on $F$ and $\theta_0$, the retailer chooses $\arg\max_{s\ge w_0}(s-\theta_0)\bar F(s)$. A larger target makes high-demand responses less attractive, so it is natural to expect the adverse realization at the upper end of the interval. The following lemma makes this monotonicity precise and removes $\theta_0$ from the subsequent optimization.

\begin{lemma}[Worst current CE realization]
\label{lem:current-target-monotonicity}
For every admissible $F$ and stock level $z$, expected fulfilled demand is nonincreasing in $\theta_0$. Hence
\begin{equation}
\theta_0^{\mathrm{wc}}=w_0+\epsilon_0.
\label{eq:current-vv-error-main}
\end{equation}
\end{lemma}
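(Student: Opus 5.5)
Fix an admissible $F$ and a stock level $z$, and treat $w_0$ as fixed. Write $R(s;\theta)=(s-\theta)\bar F(s)$ for the retailer's objective under target $\theta$, on the feasible set $s\ge w_0$. The plan is a standard monotone comparative statics argument, and then to pass from prices to fulfilled demand through the monotonicity of $\bar F$.

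\textbf{Step 1: single crossing in $\theta$.} For $s'>s$ and $\theta'>\theta$ I will verify
\[
R(s';\theta')-R(s;\theta')=R(s';\theta)-R(s;\theta)+(\theta'-\theta)\bigl(\bar F(s)-\bar F(s')\bigr).
\]
The correction term is nonnegative because $\bar F$ is nonincreasing. Hence $R$ has increasing differences in $(s,\theta)$, and Topkis' theorem applies. It yields that $\mathcal S_F(\cdot;\theta)=\arg\max_{s\ge w_0}R(s;\theta)$ is nondecreasing in $\theta$ in the strong set order. Nonemptiness and compactness, assumed after \eqref{eq:retailer}, give well-defined smallest and largest selections, and both are nondecreasing in $\theta$.

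\textbf{Step 2: demand and fulfilled demand.} Let $Y_\theta$ be the realized demand $\bar F(S_\theta)$, and $g(y)=\min\{z,y\}$, which is nondecreasing.
\begin{itemize}
\item When $\alpha>0$, I expect the argmax to be a singleton. For $\theta\le w_0+\epsilon_0$ one has $\phi_F(s)-\theta=\psi_\alpha+\alpha s-\theta$, which is strictly increasing, so the argument of Lemma~\ref{lem:response-boundary} goes through with $\theta$ in place of $w_0$. Step 1 then gives $S_\theta$ nondecreasing in $\theta$. Therefore $\bar F(S_\theta)$ and $g(\bar F(S_\theta))$ are nonincreasing in $\theta$.
\item When $\alpha=0$, under Assumption~\ref{ass:uniform-selection} the response is uniform on $[\ell_F(\theta),u_F(\theta)]$. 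The two endpoints are the smallest and largest selections, so both are nondecreasing in $\theta$ by Step 1.
\end{itemize}

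\textbf{Step 3: the uniform tie-breaking case.} I need $\E[g(\bar F(U))]$ with $U\sim\operatorname{Unif}[\ell,u]$ to be nonincreasing when both endpoints move right. The function $s\mapsto g(\bar F(s))$ is nonincreasing. For such functions, the average over an interval is nonincreasing in each endpoint, and shifting both endpoints right produces a first-order stochastically larger price. Degenerate intervals are point masses and are covered by the same argument.

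\textbf{Main obstacle.} The hard point is interaction between regimes. A tied interval may collapse or appear as $\theta$ changes, and the lower boundary $s\ge w_0$ may bind. Strong set order handles both, because it compares the entire argmax sets rather than individual first-order conditions. I will also check the boundary responses at $s=w_0$, which the model interprets as limits; these only move right, so monotonicity is preserved.

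\textbf{Conclusion.} The supplier's revenue is $w_0\,\E[\min\{z,Y\}]$, and $w_0$ does not depend on $\theta_0$. So for every $F$, minimizing over $\theta_0\in[w_0-\epsilon_0,w_0+\epsilon_0]$ is attained at $\theta_0=w_0+\epsilon_0$. The same choice is therefore worst case uniformly over the ambiguity set, which justifies \eqref{eq:current-vv-error-main}.
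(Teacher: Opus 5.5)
Your proposal is correct and follows essentially the same route as the paper. The paper shows that $(s-\theta)\bar F(s)$ has increasing differences, deduces that both endpoints of the argmax interval are nondecreasing in $\theta$, and then couples the uniform selections as $\ell_F(\theta)+U\{u_F(\theta)-\ell_F(\theta)\}$ with a common $U$; that coupling is exactly your first-order stochastic dominance step, and it also covers your separate $\alpha>0$ and point-mass cases as the instance $\ell_F=u_F$.
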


For an interior observation, Q alone permits the pricing-index interval
\begin{equation}
[\underline\phi_i^Q,\overline\phi_i^Q]
=\left[s_i+\frac{q_i}{\bar G_{\alpha,i-1}'(s_i)},\ 
       s_i+\frac{q_i}{\bar G_{\alpha,i}'(s_i)}\right].
\label{eq:q-virtual-main}
\end{equation}
The historical wholesale price is informative only while $[w_i-\delta_i,w_i+\delta_i]$ excludes part of this Q-consistent interval.

\begin{figure}[H]
\centering
\includegraphics[width=0.98\textwidth]{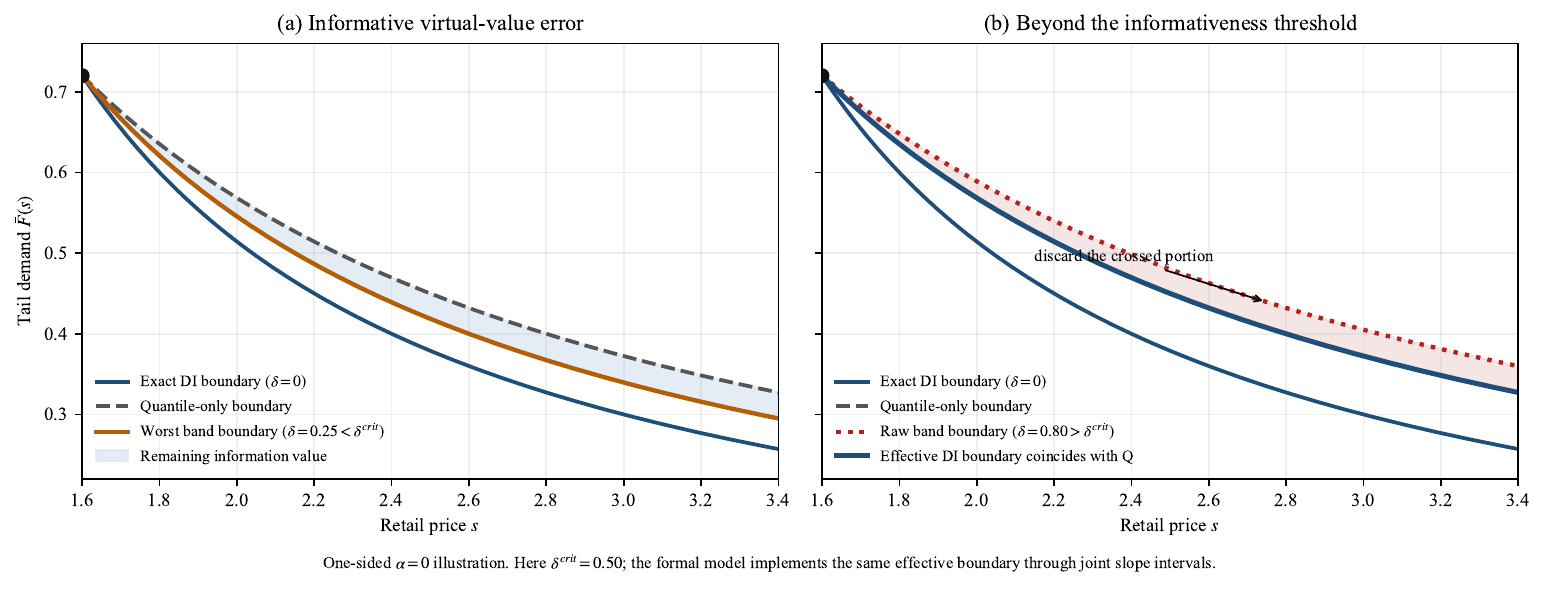}
\caption{How CE weakens historical decision information. A small tolerance still removes demand behavior that Q would allow. Once the tolerance covers the full Q-consistent range, the historical wholesale price adds no restriction.}
\label{fig:virtual-value-error}
\end{figure}

Figure~\ref{fig:virtual-value-error} suggests a simple threshold: CE adds information only while its admissible pricing interval is narrower than the range already permitted by Q. The next proposition states the exact cutoff.

\begin{proposition}[CE informativeness threshold]
\label{prop:critical-error}
For $i=2,\ldots,T-1$, let
\begin{equation}
\delta_i^{\mathrm{crit}}
=\max\{w_i-\underline\phi_i^Q,\ \overline\phi_i^Q-w_i\}.
\label{eq:critical-error-main}
\end{equation}
Then $\delta_i<\delta_i^{\mathrm{crit}}$ excludes at least one Q-feasible demand behavior, whereas $\delta_i\ge\delta_i^{\mathrm{crit}}$ makes transaction $i$ redundant.
\end{proposition}

\subsection{Price error (PE)}
\label{subsec:decision-space-error}

PE represents a different operational mechanism. The observed sales still occur at $s_i$, but the observed price itself may differ from the retailer's exact optimum because of adjustment frictions, measurement error, or boundedly rational implementation. Accordingly, PE measures error directly in price units rather than in the pricing condition. Let $d_i(F)=\dist(s_i,\mathcal S_F(w_i))$. For radii $\boldsymbol\eta=(\eta_1,\ldots,\eta_T)$,
\begin{equation}
\calF_\alpha^{\PE,\boldsymbol\eta}(\calH)
=\{F\in\calF_\alpha^Q(\calH):d_i(F)\le\eta_i\ \forall i\}.
\label{eq:decision-space-set}
\end{equation}
Thus, each observed price lies near at least one unobserved exact optimum. A quadratic budget, $\sum_i\omega_i d_i(F)^2\le\tau$, can be used when a few transactions may have larger errors. The observed demand remains $q_i=\bar F(s_i)$ because sales occurred at the observed price.

For the new contract, define $\mathcal A_F(w_0;\eta_0)=\{y\ge w_0:\dist(y,\mathcal S_F(w_0))\le\eta_0\}$ and
\begin{equation}
q_{\min,\alpha}^{\PE,\boldsymbol\eta,\eta_0}(w_0)
=\inf\{\bar F(y):F\in\calF_\alpha^{\PE,\boldsymbol\eta}(\calH),\ y\in\mathcal A_F(w_0;\eta_0)\}.
\label{eq:decision-space-qmin}
\end{equation}

Introducing an unobserved optimum for every transaction appears to restore the original infinite-dimensional difficulty. Sparse histories make the problem manageable, however. There are only finitely many relative orderings of the observed prices, their latent exact optima, and the current exact and realized prices. Let $\Omega(\boldsymbol\eta,\eta_0)$ collect these orderings. Conditional on an ordering $\omega$, the error limits and adjacent boundary conditions form a finite program $\mathcal P_\omega$.

\begin{theorem}[Finite solution under PE]
\label{thm:decision-space-reduction}
If $\calF_\alpha^Q(\calH)\neq\varnothing$, then
\begin{equation}
q_{\min,\alpha}^{\PE,\boldsymbol\eta,\eta_0}(w_0)
=\min_{\omega\in\Omega(\boldsymbol\eta,\eta_0)}
\inf_{p\in\mathcal P_\omega}q_0^e,
\label{eq:decision-space-response-set}
\end{equation}
where $q_0^e$ is demand at the realized current price. An attained worst case is piecewise $\bar G_\alpha$; otherwise such curves approach the same infimum.
\end{theorem}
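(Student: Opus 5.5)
The plan is to prove equality in \eqref{eq:decision-space-response-set} by two inequalities. This is the same reduction used for Theorem~\ref{thm:insertion}, with the latent exact optima added as finite-dimensional decision variables.

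First, I would set up the program $\mathcal P_\omega$ explicitly. For each transaction $i$, introduce a latent optimum $r_i$ with $|r_i-s_i|\le\eta_i$, $r_i\ge w_i$, and latent demand $p_i=\bar F(r_i)$. For the current contract, introduce an exact optimum $r_0\in\mathcal S_F(w_0)$ with demand $p_0$, and a realized price $y$ with $y\ge w_0$, $|y-r_0|\le\eta_0$ and demand $q_0^e=\bar F(y)$. An ordering $\omega$ fixes the relative order of all the points $s_i$, $r_i$, $r_0$, $y$. Once the order is fixed, the finitely many points with their demand levels act as a "history" of the same type as $\calH$. The observed points $(s_i,q_i)$ are pure demand observations, and the points $(r_i,p_i)$ and $(r_0,p_0)$ additionally carry the decision tags $w_i$ and $w_0$. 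For this extended history, $\mathcal P_\omega$ imposes the following adjacent-pair conditions:
\begin{itemize}
\item monotonicity of the demand levels along the order;
\item the lower bridge $\bar G_\alpha$ of \eqref{eq:gpd-interpolation-main} lying below the upper envelope of neighbouring bridges;
\item the decision boundaries $\bar H_\alpha(\cdot\mid r_i,p_i,w_i)$ and $\bar H_\alpha(\cdot\mid r_0,p_0,w_0)$ from Lemma~\ref{lem:response-boundary} lying above every adjacent point.
\end{itemize}
These are exactly the band conditions of Proposition~\ref{prop:history-feasible}, written as finitely many inequalities in the scalars $(r,p,y,q_0^e)$.

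For the direction ``$\ge$'', take any feasible $F$ and realized price $y\in\mathcal A_F(w_0;\eta_0)$. By compactness of $\mathcal S_F(w_i)$, I choose $r_i\in\mathcal S_F(w_i)$ attaining $d_i(F)\le\eta_i$, and similarly choose $r_0$. These points induce some ordering $\omega$, and I set all demands equal to the values of $\bar F$ at these points. Every constraint of $\mathcal P_\omega$ then holds. The bridge conditions follow because $F$ is $\alpha$-regular and passes through all the points, using the extremality of generalized-Pareto curves already invoked for \eqref{eq:q-lower-envelope}. The decision boundaries follow from Lemma~\ref{lem:response-boundary} applied at each $r_i$ with $w_i$ and at $r_0$ with $w_0$. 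Hence $\bar F(y)$ is at least the right-hand side.

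For the direction ``$\le$'', given a feasible point of $\mathcal P_\omega$, I apply Proposition~\ref{prop:history-feasible} with $K=DI$ to the extended history. In that history the points $r_i$ and $r_0$ are decision-tagged, and the points $s_i$ and $y$ are untagged. This yields an $\alpha$-regular $F$ passing through all the points, with $\phi_F(r_i)=w_i$ and $\phi_F(r_0)=w_0$. By the global-optimality remark following Definition~\ref{def:alpha-regular}, $r_i\in\mathcal S_F(w_i)$ and $r_0\in\mathcal S_F(w_0)$. Therefore $d_i(F)\le\eta_i$ and $y\in\mathcal A_F(w_0;\eta_0)$, so $q_0^e$ is achievable.

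The main obstacle is the ``$\le$'' direction, because Proposition~\ref{prop:history-feasible} was stated for exact DI histories in which every point is tagged. Here some points are tagged and others are not. I would handle this by noting that the construction in its proof is local: it pieces together $\bar G_\alpha$ bridges and $\bar H_\alpha$ caps on each adjacent interval, so untagged points simply omit their $\bar H$ constraint. The construction produces piecewise-$\bar G_\alpha$ curves, which gives the claimed form of an attained worst case. A second difficulty is that $\mathcal P_\omega$ may be feasible only with strict inequalities, for example when $r_i$ coincides with $s_i$ or when the retailer margin is zero. In that case the infimum is approached by perturbing the points inside the ordering, which gives the "approach the same infimum" clause. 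Finally, $|\Omega|$ is finite because there are only $2T+2$ points.
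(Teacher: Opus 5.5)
Your proposal is correct and is essentially the paper's proof: both directions pass between feasible pairs $(F,y)$ and configurations of the augmented node system (observed points, latent optima, current exact optimum, realized price). The mixed-tag extension of Proposition~\ref{prop:history-feasible} that you flag as the main obstacle is exactly Lemma~\ref{lem:finite-concavity-test}, stated in the coordinates $x_\alpha(q)$, where your $\bar G_\alpha$ bridges and $\bar H_\alpha$ caps become secants and supporting lines of an increasing concave function. The paper differs in two details. First, it certifies $r_i\in\mathcal S_F(w_i)$ by a one-sided (supergradient) condition rather than by $\phi_F(r_i)=w_i$, because the attained piecewise-$\bar G_\alpha$ curve generally has kinks at tagged nodes; in your coordinates, $\bar F\le\bar H_\alpha(\cdot\mid r_i,p_i,w_i)$ with equality at $r_i$ already implies optimality. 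Second, it merges coincident nodes such as $r_i=s_i$ rather than perturbing them, which matters when $\eta_i=0$ and no perturbation is available.
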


Larger PE radii weaken the guarantee: if $\boldsymbol\eta\le\boldsymbol\eta'$ and $\eta_0\le\eta_0'$, then
\begin{equation}
\calF_\alpha^{\PE,\boldsymbol\eta}(\calH)
\subseteq\calF_\alpha^{\PE,\boldsymbol\eta'}(\calH),\qquad
q_{\min,\alpha}^{\PE,\boldsymbol\eta,\eta_0}(w)
\ge q_{\min,\alpha}^{\PE,\boldsymbol\eta',\eta_0'}(w).
\label{eq:decision-space-nesting}
\end{equation}
At zero historical error, PE reduces to exact DI. When $\alpha=0$, allowing any realized price near the tied-optimal interval is more conservative than first selecting uniformly from that interval and then adding error; we use the former interpretation because it follows directly from the distance-to-optimal-set definition.

\subsection{Comparison and informativeness}
\label{subsec:error-comparison}

CE and PE should not be calibrated as if their tolerances were interchangeable: one is measured in pricing-condition units and the other in price units. When $\alpha>0$, however, the pricing index must increase at a minimum rate, which provides a one-sided conversion between the two models.

\begin{proposition}[CE--PE comparison]
\label{prop:error-model-comparison}
For $\alpha>0$,
\begin{equation}
\calF_\alpha^{\CE,\boldsymbol\delta}(\calH)
\subseteq\calF_\alpha^{\PE,\boldsymbol\delta/\alpha}(\calH).
\label{eq:foc-to-ds-set}
\end{equation}
If $\phi_F$ is $L$-Lipschitz on the relevant price range, then
\begin{equation}
\calF_\alpha^{\PE,\boldsymbol\eta}(\calH)
\subseteq\calF_\alpha^{\CE,L\boldsymbol\eta}(\calH).
\label{eq:ds-to-foc-set}
\end{equation}
No distribution-free conversion is available at $\alpha=0$.
\end{proposition}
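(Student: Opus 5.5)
The plan is to prove the first inclusion by showing that a pricing-index error of size $\delta_i$ forces an actual optimum within distance $\delta_i/\alpha$ of $s_i$, and the second by running the same argument backwards with the Lipschitz constant. Both parts rest on the decomposition $\phi_F(s)=\psi_\alpha(s\mid F)+\alpha s$ from Definition~\ref{def:alpha-regular}. For $\alpha$-regular $F$, $\psi_\alpha$ is nondecreasing, so for $s<t$
\begin{equation*}
\phi_F(t)-\phi_F(s)\ge\alpha(t-s).
\end{equation*}
Thus $\phi_F$ is strictly increasing with slope at least $\alpha$. By the discussion after Definition~\ref{def:alpha-regular}, the retailer's optimum under $w$ is the unique point $s^\star(w)$ with $\phi_F(s^\star)=w$, so $\mathcal S_F(w_i)=\{s^\star(w_i)\}$.

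\textbf{First inclusion.} Take $F\in\calF_\alpha^{\CE,\boldsymbol\delta}(\calH)$ and fix $i$.
\begin{itemize}
\item Write $s^\star=s^\star(w_i)$. The growth bound gives $\alpha|s_i-s^\star|\le|\phi_F(s_i)-\phi_F(s^\star)|=|\phi_F(s_i)-w_i|\le\delta_i$.
\item Hence $d_i(F)\le\delta_i/\alpha$.
\item The quantile constraints $\barF(s_i)=q_i$ are common to both sets, so $F\in\calF_\alpha^{\PE,\boldsymbol\delta/\alpha}(\calH)$.
\end{itemize}

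\textbf{Main obstacle: existence of $s^\star$.} The delicate point is showing that $s^\star$ really exists and satisfies $\phi_F(s^\star)=w_i$, rather than being a boundary optimum or failing to exist because $\phi_F$ jumps over $w_i$.
\begin{itemize}
\item I would use the standing assumption that $\mathcal S_F(w)$ is nonempty and compact, together with the convention that boundary responses are limits.
\item If $\phi_F$ is not continuous, I would work with the generalized inverse $s^\star=\inf\{s:\phi_F(s)\ge w_i\}$. The optimality characterization in the paper implies that $s^\star$ is the maximizer of $(s-w_i)\barF(s)$.
\item The bound then still follows: on each side of $s^\star$, $\phi_F-w_i$ has the appropriate sign and grows at rate at least $\alpha$.
\end{itemize}

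\textbf{Second inclusion.} Take $F\in\calF_\alpha^{\PE,\boldsymbol\eta}(\calH)$.
\begin{itemize}
\item There is $s^\star\in\mathcal S_F(w_i)$ with $|s_i-s^\star|\le\eta_i$, and first-order optimality gives $\phi_F(s^\star)=w_i$.
\item The Lipschitz assumption on the relevant range, which contains $s_i$ and $s^\star$, gives $|\phi_F(s_i)-w_i|\le L|s_i-s^\star|\le L\eta_i$.
\item So $F\in\calF_\alpha^{\CE,L\boldsymbol\eta}(\calH)$.
\end{itemize}

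\textbf{Why no conversion exists at $\alpha=0$.} I would give an explicit counterexample family in each direction.
\begin{itemize}
\item \emph{CE does not imply PE.} Take $\phi_F$ constant equal to $w_i+\delta$ on a long interval containing $s_i$ (a Pareto piece $\barF(s)=A/(s-w_i-\delta)$). The prices where $\phi_F=w_i$ can then be pushed arbitrarily far from $s_i$, so $d_i(F)$ is unbounded for fixed $\delta$. I would check that quantile fit is preserved by gluing pieces with nondecreasing $\phi_F$, which is the only regularity required at $\alpha=0$.
\item \emph{PE does not imply CE.} Place a steep jump of $\phi_F$ between $s^\star$ and $s_i$, which keeps $d_i$ small while making $|\phi_F(s_i)-w_i|$ arbitrarily large.
\end{itemize}
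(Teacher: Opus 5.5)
Your proof is correct and follows the paper's route. Both inclusions come from $|\phi_F(s_i)-w_i|=|\phi_F(s_i)-\phi_F(r_i)|$ at an optimizer $r_i$ with $\phi_F(r_i)=w_i$, bounded below by $\alpha|s_i-r_i|$ via $\alpha$-regularity and above by $L|s_i-r_i|$ via the Lipschitz hypothesis, and your flat-$\phi_F$ example at $\alpha=0$ is an explicit version of the paper's remark that $\phi_F$ may change arbitrarily slowly.

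One wording fix: with the history fixed, a crossing to the left of $s_i$ always gives $d_i(F)<s_i-w_i$ (the optimum exceeds $w_i$), so the prices where $\phi_F=w_i$ cannot be pushed arbitrarily far. State that example instead as a flat stretch of fixed positive length while $\delta\downarrow0$. This makes $d_i(F)/\delta$ unbounded, which is all the claim requires.
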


Thus, small CE guarantees a nearby exact price when $\alpha>0$; a nearby exact price need not imply small CE unless demand sensitivity is also bounded above.

The supplier also needs to know when a noisy historical price ceases to add information beyond Q. Unlike the CE cutoff, this threshold depends on how far the retailer's exact response can move while remaining consistent with the complete transaction history.

\begin{proposition}[PE informativeness threshold]
\label{prop:decision-distance-threshold}
Let
\begin{equation}
\eta_i^{\mathrm{crit}}
=\sup_{F\in\calF_\alpha^Q(\calH)}\dist(s_i,\mathcal S_F(w_i)).
\label{eq:decision-distance-threshold}
\end{equation}
Then $\eta_i<\eta_i^{\mathrm{crit}}$ excludes at least one Q-feasible demand curve, whereas $\eta_i\ge\eta_i^{\mathrm{crit}}$ makes the restriction redundant. If $\alpha>0$ and $[\underline r_i^Q,\overline r_i^Q]$ is the Q-feasible response-price range to $w_i$, then
\begin{equation}
\eta_i^{\mathrm{crit}}=\max\{s_i-\underline r_i^Q,\ \overline r_i^Q-s_i\}.
\label{eq:decision-distance-threshold-positive}
\end{equation}
\end{proposition}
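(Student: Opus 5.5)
The plan splits Proposition~\ref{prop:decision-distance-threshold} into its three claims. Two of them follow directly from the definition \eqref{eq:decision-space-set}. The third, the explicit formula for $\alpha>0$, needs a reduction to the response regions of Theorem~\ref{thm:insertion}.

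First, the redundancy claim. Suppose $\eta_i\ge\eta_i^{\mathrm{crit}}$. Every $F\in\calF_\alpha^Q(\calH)$ then satisfies $d_i(F)=\dist(s_i,\mathcal S_F(w_i))\le\eta_i^{\mathrm{crit}}\le\eta_i$ by the definition of the supremum. So the $i$-th constraint in \eqref{eq:decision-space-set} removes nothing, and transaction $i$ is redundant relative to Q. Second, the informativeness claim. Suppose $\eta_i<\eta_i^{\mathrm{crit}}$. By the definition of the supremum there is some $F\in\calF_\alpha^Q(\calH)$ with $d_i(F)>\eta_i$. That $F$ is Q-feasible but violates the PE restriction at $i$, so it is excluded. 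Note that this excludes a curve even if the other radii are large: with all other $\eta_j=\infty$ (or large enough), $F$ is removed solely because of transaction $i$. This is the sense of ``informative'' I would state, mirroring Proposition~\ref{prop:critical-error}.

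Third, the explicit formula for $\alpha>0$. By the uniqueness part of Lemma~\ref{lem:response-boundary}, each $F$ in the class has a single optimum $r_F(w_i)$, so $d_i(F)=|s_i-r_F(w_i)|$. Define the Q-feasible response-price range as the projection onto the price coordinate of $\bigcup_j\mathcal C_{\alpha,j}^Q(w_i)$, where $w_i$ plays the role of the new wholesale price. This range is $\{r_F(w_i):F\in\calF_\alpha^Q(\calH)\}$.

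I would show that this set is an interval $[\underline r_i^Q,\overline r_i^Q]$, or at least that its infimum and supremum are the stated endpoints. The supremum of $|s_i-r|$ over any set of reals is $\max\{s_i-\inf r,\ \sup r-s_i\}$. This gives \eqref{eq:decision-distance-threshold-positive} whenever $\underline r_i^Q\le s_i\le\overline r_i^Q$. It also holds if $s_i$ lies outside the range, since then one term dominates and is still the supremum. Interval-ness is not actually needed, only the endpoints as inf and sup. This makes the step nearly immediate once the range is defined via Theorem~\ref{thm:insertion}'s regions.

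The main obstacle is whether ``range'' means attained values or closure. If the extreme response prices are attained only in the limit, which Theorem~\ref{thm:insertion} allows, then $\eta_i^{\mathrm{crit}}$ is a supremum that is not attained. In that case the boundary case $\eta_i=\eta_i^{\mathrm{crit}}$ must still be redundant; it is, because no $F$ exceeds the supremum. The strict case still excludes a curve, by an approximating piecewise-$\bar G_\alpha$ sequence. So I would define $\underline r_i^Q,\overline r_i^Q$ as the inf and sup, and check consistency with the paper's use. A secondary check is that $\calF_\alpha^Q(\calH)\neq\varnothing$, via Proposition~\ref{prop:history-feasible}; otherwise the supremum is $-\infty$ and the statement is vacuous.
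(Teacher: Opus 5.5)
Your proposal is correct and follows the paper's proof essentially step for step. The redundancy and exclusion claims come directly from the definition of the supremum over $\calF_\alpha^Q(\calH)$. For $\alpha>0$, the singleton optimal set reduces $\eta_i^{\mathrm{crit}}$ to $\sup_F|r_F(w_i)-s_i|=\max\{s_i-\inf_F r_F(w_i),\ \sup_F r_F(w_i)-s_i\}$, with the extrema identified as the leftmost and rightmost feasible pivots of the Q insertion system; your reading of $\underline r_i^Q,\overline r_i^Q$ as infimum and supremum matches the paper's proof.
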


The CE threshold is local because the observed price remains the point at which the pricing condition is checked. The PE threshold depends on the complete record because moving the latent optimum also changes its unobserved demand.

\section{Supplier Pricing and Stocking}
\label{sec:robust}

Sections~\ref{sec:model}--\ref{sec:noise} determine which downstream demand outcomes can follow a candidate wholesale price. This section converts that information into stock and contract decisions. Q, DI, CE, and PE differ only in the downstream-demand set supplied to the same operating problem.

\subsection{Robust operating problem}

The preceding analysis ends with a description of possible downstream demand, but the supplier's profit depends on how much of that demand can be served from stock. To keep this operating step common across Q, DI, CE, and PE, let $M$ denote one of these models and let $\mathfrak Y_\alpha^M(w)$ be the set of downstream-demand laws that can follow wholesale price $w$. The supplier evaluates each model through
\begin{align}
\underline h_\alpha^M(z,w)
&=\inf_{\nu\in\mathfrak Y_\alpha^M(w)}\E_\nu[\min\{z,Y\}],
\label{eq:robust-expected-sales}\\
\Pi_\alpha^M(w)
&=\max_{z\ge0}\{w\underline h_\alpha^M(z,w)-cz\},
\label{eq:upstream-fixed-w}\\
V_\alpha^M
&=\max_{w\in\calW}\Pi_\alpha^M(w).
\label{eq:upstream-robust}
\end{align}

These equations separate the operating problem into three steps: evaluate worst-case fulfilled demand for a fixed $(z,w)$, choose stock for each $w$, and then choose the wholesale price. The response summaries derived earlier enter the first step in one of two forms.

When every scenario produces one demand level, Theorem~\ref{thm:insertion} or Theorem~\ref{thm:decision-space-reduction} supplies $q_{\min,\alpha}^M(w)$ and
\begin{equation}
\underline h_\alpha^M(z,w)=\min\{z,q_{\min,\alpha}^M(w)\}.
\label{eq:point-response-sales}
\end{equation}

Under tied prices, a response case has demand support $[a,b]$ as in Section~\ref{subsec:alpha0-uniform}. Its expected fulfilled demand is
\begin{equation}
h(z;a,b)=
\begin{cases}
z,&z\le a,\\[1mm]
\dfrac{ab}{b-a}\left[\log\!\left(\dfrac za\right)+1-\dfrac zb\right],&a<z<b,\\[3mm]
\dfrac{ab}{b-a}\log\!\left(\dfrac ba\right),&z\ge b,
\end{cases}
\label{eq:alpha0-h-main}
\end{equation}
with $h(z;a,a)=\min\{z,a\}$. Theorem~\ref{thm:alpha0-endpoint} shows that only the endpoint cases of each tied-price family need be retained.

\subsection{Optimal pricing and stocking}

Once $\underline h_\alpha^M(z,w)$ is known, the stock decision follows from a simple margin--overage tradeoff. If $w\le c$, even a unit that is certainly sold cannot cover its stocking cost. If the downstream response is point-valued and $w>c$, profit rises until stock reaches the lowest feasible demand and falls thereafter. Under tied prices, no comparable closed form is available, but the expected-sales envelope remains concave and the supplier solves a one-dimensional problem. The next theorem collects these cases.

\begin{theorem}[Robust supplier policy]
\label{thm:capacity}
For fixed $w$,
\begin{equation}
\begin{cases}
z_\alpha^{M,*}(w)=0, & w\le c,\\[1mm]
z_\alpha^{M,*}(w)=q_{\min,\alpha}^M(w),\quad
\Pi_\alpha^M(w)=(w-c)q_{\min,\alpha}^M(w),
& w>c\ \text{and responses are point-valued},\\[1mm]
z_0^{M,*}(w)\in\arg\max_{0\le z\le\bar b_0^M(w)}
\{w\underline h_0^M(z,w)-cz\},
& w>c\ \text{under tied-price selection}.
\end{cases}
\label{eq:value-fixed-w}
\end{equation}
The last objective is concave, and $\bar b_0^M(w)$ is the largest demand endpoint retained by Theorem~\ref{thm:alpha0-endpoint}. The optimal menu policy is
\begin{equation}
w_\alpha^{M,*}\in\arg\max_{w\in\calW}\Pi_\alpha^M(w),\qquad
z_\alpha^{M,*}=z_\alpha^{M,*}(w_\alpha^{M,*}).
\label{eq:upstream-policy}
\end{equation}
\end{theorem}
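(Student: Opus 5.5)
The plan is to treat $w$ as fixed and use only two properties of $\underline h_\alpha^M(\cdot,w)$: it is nondecreasing and concave in $z$, and it is bounded by $\min\{z,\bar b\}$, where $\bar b$ is the largest demand level the retained responses can generate. Both follow from \eqref{eq:robust-expected-sales}. For each fixed law $\nu$, the map $z\mapsto\E_\nu[\min\{z,Y\}]$ is concave and nondecreasing, has slope at most one, and equals $z$ for $z\le\operatorname{ess\,inf}Y$. An infimum of such functions keeps all of these properties.

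\textbf{Case $w\le c$.} Here $\underline h_\alpha^M(z,w)\le z$, so $w\underline h_\alpha^M(z,w)-cz\le (w-c)z\le 0$. The value $0$ is attained at $z=0$, so $z^{M,*}_\alpha(w)=0$ is optimal.

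\textbf{Case $w>c$, point-valued responses.} Here \eqref{eq:point-response-sales} gives the objective $w\min\{z,q_{\min}\}-cz$. On $[0,q_{\min}]$ it equals $(w-c)z$, which is increasing. Beyond $q_{\min}$ it equals $wq_{\min}-cz$, which is decreasing. So the maximizer is $z=q_{\min,\alpha}^M(w)$ and the value is $(w-c)q_{\min,\alpha}^M(w)$.

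For $q_{\min}$ to be the right object, \eqref{eq:point-response-sales} itself must hold. The infimum over $\nu$ of $\min\{z,y\}$ equals $\min\{z,\inf y\}$, because $\min\{z,\cdot\}$ is continuous and nondecreasing. This remains true when the infimum is only approached, which is the case covered by Theorem~\ref{thm:insertion} and Theorem~\ref{thm:decision-space-reduction}.

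\textbf{Case $w>c$, tied prices.} Theorem~\ref{thm:alpha0-endpoint} represents $\underline h_0^M(z,w)$ as a minimum over finitely many endpoint cases. Each case is $h(z;a,b)$ from \eqref{eq:alpha0-h-main}.

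Concavity of each case can be checked directly. On $(a,b)$ the derivative is $\frac{ab}{b-a}(1/z-1/b)$, which is decreasing and runs from $1$ at $z=a$ to $0$ at $z=b$. The slope is $1$ on $[0,a]$ and $0$ beyond $b$. So each $h(\cdot;a,b)$ is concave, and a finite minimum of concave functions is concave. Subtracting the linear term $cz$ keeps the objective concave.

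To restrict the search to $[0,\bar b_0^M(w)]$, note that every retained case is constant in $z$ for $z\ge b\le\bar b_0^M(w)$. Hence for $z\ge\bar b_0^M(w)$ the objective has slope $-c<0$, and the maximum is attained on the compact interval $[0,\bar b_0^M(w)]$. The objective is continuous, so a maximizer exists.

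\textbf{Menu policy.} The statement \eqref{eq:upstream-policy} is the definition \eqref{eq:upstream-robust} read off once $\Pi_\alpha^M(w)$ has been computed for each $w$. The argmax needs to be nonempty. I would take $\calW$ to be finite, which is the menu interpretation. If $\calW$ is compact instead, I would argue upper semicontinuity of $\Pi_\alpha^M$.

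\textbf{Main obstacle.} The delicate step is justifying that replacing the infinite family $\mathfrak Y_0^M(w)$ by the finitely many endpoint cases preserves the function of $z$, not merely its value at a single $z$. Theorem~\ref{thm:alpha0-endpoint} is stated for every fixed $z$, so pointwise equality for all $z$ gives equality of the functions, and the concavity argument above then applies.
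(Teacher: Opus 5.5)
Your proposal is correct and follows the same three-case argument as the paper's proof: the bound $\underline h_\alpha^M(z,w)\le z$ when $w\le c$, the change of slope at $q_{\min,\alpha}^M(w)$ in the point-valued case, and, in the tied case, concavity of a finite minimum of endpoint functions together with their constancy beyond $\bar b_0^M(w)$. Your additions fill in details the paper asserts without computation: the explicit derivative check that each $h(\cdot;a,b)$ is concave, the justification of \eqref{eq:point-response-sales}, and the remark that concavity already follows from $\underline h_\alpha^M$ being an infimum of concave functions of $z$.
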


The policy has a simple managerial interpretation. With one demand level in each scenario, the supplier stocks the lowest feasible demand. With tied prices, the supplier solves one concave stock problem against a finite set of expected-sales cases.

The policy is designed against the least favorable response, but the guarantee should also hold under the realized demand curve. If the true response law belongs to the maintained family, the definition of $\underline h_\alpha^M$ immediately provides that link.

\begin{corollary}[Performance guarantee]
\label{cor:exact-realization}
If the true downstream-demand law belongs to $\mathfrak Y_\alpha^M(w)$ for every candidate $w$, then
\begin{equation}
\E[\pi(w,z_\alpha^{M,*}(w),Y)]\ge\Pi_\alpha^M(w),
\label{eq:exact-realization}
\end{equation}
and the selected menu policy earns at least $V_\alpha^M$.
\end{corollary}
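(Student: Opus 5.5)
The corollary follows directly from the definitions in \eqref{eq:robust-expected-sales}--\eqref{eq:upstream-robust}. The plan is to compute the expected profit under the true law, replace expected sales by their robust lower bound, and use the optimality of $z_\alpha^{M,*}(w)$.

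\textbf{Step 1: expected profit is linear in expected sales.} Fix a candidate $w\in\calW$ and write $z=z_\alpha^{M,*}(w)$. Let $\nu^\dagger$ be the true downstream-demand law, so that $Y\sim\nu^\dagger$. From \eqref{eq:profit},
\[
\E[\pi(w,z,Y)]=w\,\E_{\nu^\dagger}[\min\{z,Y\}]-cz.
\]
This uses only linearity of expectation. The stock $z$ is deterministic given $w$, and $\min\{z,Y\}$ is bounded in $[0,z]$, so the expectation is finite.

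\textbf{Step 2: invoke the infimum.} By hypothesis, $\nu^\dagger\in\mathfrak Y_\alpha^M(w)$. The definition \eqref{eq:robust-expected-sales} therefore gives
\[
\E_{\nu^\dagger}[\min\{z,Y\}]\ge\underline h_\alpha^M(z,w).
\]
Since $w\ge0$ on $\calW\subset\mathbb R_+$, multiplying by $w$ preserves the inequality. Hence
\[
\E[\pi(w,z,Y)]\ge w\,\underline h_\alpha^M(z,w)-cz.
\]

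\textbf{Step 3: identify the right-hand side with $\Pi_\alpha^M(w)$.} By construction in Theorem~\ref{thm:capacity}, $z_\alpha^{M,*}(w)$ attains the maximum in \eqref{eq:upstream-fixed-w}. So $w\,\underline h_\alpha^M(z,w)-cz=\Pi_\alpha^M(w)$, which proves \eqref{eq:exact-realization}.

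\textbf{Step 4: the selected menu policy.} Set $w=w_\alpha^{M,*}$. Then $\Pi_\alpha^M(w_\alpha^{M,*})=V_\alpha^M$ by \eqref{eq:upstream-policy}, so the selected policy earns at least $V_\alpha^M$.

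\textbf{Attainment of the maximizer.} The only point needing care is that the maximizer in \eqref{eq:upstream-fixed-w} is attained, so that equality rather than a supremum holds in Step 3.

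In the point-valued case this is explicit from Theorem~\ref{thm:capacity}. The value is $(w-c)q_{\min,\alpha}^M(w)$ when $w>c$, and $0$ at $z=0$ when $w\le c$, because $\underline h\ge0$.

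In the tied case, the maximization is over the compact interval $[0,\bar b_0^M(w)]$. The objective is concave and continuous there, because it is a finite minimum of the continuous functions $h(\cdot;a,b)$ given by Theorem~\ref{thm:alpha0-endpoint}, so a maximizer exists.

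If one preferred not to rely on attainment, the same argument with an $\varepsilon$-optimal $z$ and then $\varepsilon\to0$ would suffice. No part of the argument is a real obstacle.
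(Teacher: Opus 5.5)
Your proof is correct and follows the paper's own argument. Like the paper, you bound the true-law expected sales below by the infimum $\underline h_\alpha^M(z,w)$, multiply by $w\ge 0$, subtract $cz$, use the optimality of $z_\alpha^{M,*}(w)$ to identify the right-hand side with $\Pi_\alpha^M(w)$, and then evaluate at $w_\alpha^{M,*}$. Your check that the maximizer is attained is extra care the paper leaves implicit; the paper instead adds a remark, not needed for the statement, that equality holds in the point-valued case because every stocked unit is sold.
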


\subsection{Value of information and error}
\label{subsec:operational-comparisons}

Having characterized the supplier's policy, we next evaluate the value of the information used to construct it. DI removes demand curves from the Q set but does not add new ones. With the same operating choices, this set reduction cannot lower worst-case fulfilled demand. The next corollary carries this ordering through to fixed-price and ex ante profit.

\begin{corollary}[Value of historical decisions]
\label{cor:info-value}
For all $\alpha,w,z$,
\begin{equation}
\underline h_\alpha^{DI}(z,w)\ge\underline h_\alpha^Q(z,w),\qquad
\Pi_\alpha^{DI}(w)\ge\Pi_\alpha^Q(w),\qquad
V_\alpha^{DI}\ge V_\alpha^Q.
\label{eq:qmin-info}
\end{equation}
\end{corollary}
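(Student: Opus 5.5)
The plan is a monotonicity argument through the three nested optimization layers in \eqref{eq:robust-expected-sales}--\eqref{eq:upstream-robust}. Since $\underline h$ is an infimum, everything follows once the DI downstream-demand set is shown to be contained in the Q set.

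\textbf{Step 1: set inclusion at the level of demand laws.} By Proposition~\ref{prop:history-feasible}, $\calF_\alpha^{DI}(\calH)\subseteq\calF_\alpha^Q(\calH)$. The law $\nu$ of $Y$ generated by a curve $F$ and wholesale price $w$ is determined by $F$ and $w$ alone, through the retailer's response $\mathcal S_F(w)$ and, when $\alpha=0$, the tie-selection rule of Assumption~\ref{ass:uniform-selection}. It does not depend on which ambiguity set $F$ was drawn from. Hence $\mathfrak Y_\alpha^{DI}(w)\subseteq\mathfrak Y_\alpha^Q(w)$ for every $w$, as already noted after Theorem~\ref{thm:alpha0-endpoint}.

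The one point to check is that the reduced representations used in Theorems~\ref{thm:insertion} and~\ref{thm:alpha0-endpoint} do not break this comparison. They must still compute the infimum over the full law sets. For this I would invoke the equalities \eqref{eq:gp-response-reduction} and \eqref{eq:alpha0-extremal-equivalence}, which state exactly this for each $K$. I would therefore compare the original infima, not the finite reductions.

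\textbf{Step 2: expected sales.} For fixed $(z,w)$, an infimum over a smaller set is no smaller, so $\underline h_\alpha^{DI}(z,w)\ge\underline h_\alpha^Q(z,w)$. If $\calF_\alpha^{DI}(\calH)$ is empty, the infimum is $+\infty$ by convention, and the inequality holds trivially. I would flag this case, or simply assume historical consistency.

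\textbf{Step 3: fixed-price profit.} Since $w\ge0$, $w\,\underline h^{DI}_\alpha(z,w)-cz\ge w\,\underline h^Q_\alpha(z,w)-cz$ for every $z\ge0$. Taking the maximum over $z$ gives $\Pi_\alpha^{DI}(w)\ge\Pi_\alpha^Q(w)$. This can be done directly by plugging the Q maximizer into the DI objective, without relying on attainment. In the point-valued case, Theorem~\ref{thm:capacity} gives the same conclusion through $q_{\min}^{DI}\ge q_{\min}^Q$ when $w>c$, and both sides equal zero when $w\le c$.

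\textbf{Step 4: ex ante profit.} Maximizing $\Pi_\alpha^{DI}(w)\ge\Pi_\alpha^Q(w)$ over $w\in\calW$ gives $V_\alpha^{DI}\ge V_\alpha^Q$.

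\textbf{Main obstacle.} The only delicate part is Step 1 at $\alpha=0$. There the law depends on the tied interval $[\ell_F(w),u_F(w)]$ as a whole, not only on a single demand level. I must make sure this interval is the true argmax set of $F$. It must not be the feasible contact set $I_A^K(w)$, which depends on $K$. Defining $\mathfrak Y$ directly through $F\mapsto\mathcal S_F(w)$ makes the inclusion immediate.
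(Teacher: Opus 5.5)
Your proposal is correct and follows the paper's own proof. The paper likewise obtains $\calF_\alpha^{DI}(\calH)\subseteq\calF_\alpha^Q(\calH)$ from the definitions, notes that the retailer-selection rule is the same under both models so the DI law set is a subset of the Q law set, and compares the infima. Your Steps 3--4 (using $w\ge0$ and passing the pointwise inequality through the maxima over $z$ and $w$) make explicit what the paper compresses into ``taking infima.''
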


The ordering is weak because discarded demand curves create value only when they would otherwise determine the worst case near a price and stock level that the supplier actually considers. This is why the location of historical transactions matters as much as their number.

The same set-inclusion logic explains the effect of imperfect decisions. Larger tolerances admit more demand curves and more adverse retailer responses, so the supplier's guarantee can only decrease. If $\boldsymbol\delta\le\boldsymbol\delta'$ and $\epsilon_0\le\epsilon_0'$, then
\begin{equation}
\underline h_\alpha^{\CE,\bm0,\epsilon_0}
\ge\underline h_\alpha^{\CE,\boldsymbol\delta,\epsilon_0}
\ge\underline h_\alpha^{\CE,\boldsymbol\delta',\epsilon_0'}
\ge\underline h_\alpha^{Q,\epsilon_0'},
\label{eq:foc-sales-nesting}
\end{equation}
pointwise in $(z,w)$. PE has the analogous ordering, and for $\alpha>0$, Proposition~\ref{prop:error-model-comparison} implies $\underline h_\alpha^{\CE,\boldsymbol\delta,\epsilon_0}\ge\underline h_\alpha^{\PE,\boldsymbol\delta/\alpha,\epsilon_0/\alpha}$.

For a finite wholesale-price menu, implementation follows the managerial sequence of the model: compute the downstream summary for each $w$, choose stock using Theorem~\ref{thm:capacity}, and then compare $\Pi_\alpha^M(w)$ across the menu.

\section{Numerical Study}
\label{sec:numerical}

In this section, we evaluate the value of historical retailer pricing decisions. The experiments vary five features: the amount of historical information, where that information lies on the demand curve, the supplier's capacity cost, the size of imperfect decisions, and whether the imperfection is measured as condition error or price error. Q and DI always face the same demand environment, wholesale-price menu, capacity cost, and current retailer behavior, so their difference is attributable to the use of historical wholesale prices. The final two experiments implement CE and PE separately, using the units and computational restrictions appropriate to each model rather than treating their tolerances as interchangeable.

\subsection{Experimental design}

We consider six valuation distributions that span bounded and unbounded supports and a range of tail shapes: a uniform distribution on $[10,20]$; a shifted exponential distribution with location $5$ and rate $0.35$; a shifted Gamma distribution with location $8$, shape $4$, and scale $1.3$; a shifted chi-squared distribution with location $7$ and five degrees of freedom; a normal distribution with mean $15$ and standard deviation $2.5$ truncated to $[10,20]$; and the scaled distribution $10+10\,\operatorname{Beta}(2,5)$. Each distribution is $\alpha$-regular for every $\alpha\in[0,1]$. Holding the DGP fixed while varying $\alpha$ therefore changes only the strength of the maintained shape restriction, allowing us to assess the effect of structural robustness without changing the underlying market.

To study sparse histories systematically, we use $\calQ_3=\{3/8,4/8,5/8\}$, $\calQ_5=\calQ_3\cup\{2/8,6/8\}$, and $\calQ_7=\calQ_5\cup\{1/8,7/8\}$. For each $q_i\in\calQ_T$, the population transaction is
\begin{equation}
 s_i=\barF^{-1}(q_i),\qquad w_i=\phi_F(s_i).
 \label{eq:population-history}
\end{equation}
Thus, $\calH^{(3)}\subset\calH^{(5)}\subset\calH^{(7)}$: increasing $T$ expands the observed range rather than adding points within an already covered interval. The $w_i$ rationalize the retailer's observed prices under the DGP; they are not assumed to be optimal supplier prices.

The supplier uses the common 50-price menu $\calW=\operatorname{linspace}(2.5,17,50)$, including both endpoints. This menu is fixed before the histories are generated and is held constant across all DGPs, history sizes, values of $\alpha$, information models, capacity costs, and the full-information benchmark. Q observes the historical $(s_i,q_i)$ pairs, whereas DI observes the same outcomes together with the associated $w_i$. Both methods use the same retailer-response model for the new wholesale price and optimize over the same menu. This matched design isolates the incremental value of historical decision information.

Let $q_F(w)$ denote demand at the true retailer response to $w$. For $K\in\{Q,DI\}$, performance is measured by the profit-to-oracle ratio
\begin{equation}
 R_K(F,T,\alpha,c)
 =
 \frac{\displaystyle
   \max_{w\in\calW}\Pi_\alpha^K(w;\calH^{(T)},c)}
 {\displaystyle
   \max_{w\in\calW}(w-c)_+q_F(w)}.
 \label{eq:oracle-ratio}
\end{equation}
The denominator is the profit of a full-information supplier that knows $F$ but faces the same wholesale-price menu and capacity cost. The numerator is the supplier's robust guarantee. In the exact-decision and CE experiments, the $\alpha=0$ numerator jointly optimizes wholesale price and stock against the endpoint expected-sales family. The direct set-valued PE experiment instead uses the point-demand bound in \eqref{eq:decision-space-qmin}, so $z=q_{\min}$; the same point-demand reduction applies for $\alpha>0$ through \eqref{eq:value-fixed-w}. Corollary~\ref{cor:exact-realization} guarantees that realized true-DGP profit is no smaller than this numerator; equality is automatic in the positive-$\alpha$ benchmark but need not hold under uniform selection at $\alpha=0$. Unless otherwise stated, we set $c=0.2$. We vary $\alpha$ to change the maintained shape restriction, $T$ to change historical coverage, and $c$ to examine the supplier's operating environment.

We report every DGP separately rather than averaging across distributions. This is important because the value of historical information depends on which part of the demand curve is relevant to the supplier's selected wholesale price; averaging would obscure these distribution-specific mechanisms.

\subsection{Operational value of historical decision information}

We begin with the paper's main operational comparison: how much does the historical wholesale-price context add when the supplier observes only a few transactions? Table~\ref{tab:distribution-main} shows that historical pricing decisions generate substantial value even with very short histories. At $T=3$ and $\alpha=0$, DI improves the profit-to-oracle ratio for all six DGPs. In table order, the gains are 11.8, 9.3, 9.1, 10.7, 10.5, and 9.7 percentage points. The improvement is therefore systematic across the demand environments rather than being driven by a particular parametric family. Strengthening the shape restriction improves the guarantees of both methods, while expanding the historical range benefits those cases in which the additional observations constrain the response region relevant to the supplier's decision.

\begin{table}[H]
\centering
\caption{Profit-to-oracle ratios under Q and DI}
\label{tab:distribution-main}
\scriptsize
\resizebox{\textwidth}{!}{%
\begin{tabular}{clccccc}
\toprule
$T$ & DGP & $\alpha=0$ & $0.25$ & $0.50$ & $0.75$ & $1$ \\
\midrule
3 & Uniform & 0.872/\textbf{0.990} & 0.885/\textbf{0.990} & 0.905/\textbf{0.990} & 0.919/\textbf{0.990} & 0.938/\textbf{0.990} \\
 & Exponential & 0.867/0.960 & 0.886/0.960 & 0.906/0.960 & 0.931/0.960 & \textbf{0.979}/\textbf{0.979} \\
 & Gamma & 0.872/0.963 & 0.884/0.975 & 0.895/0.987 & 0.908/0.991 & 0.922/\textbf{0.993} \\
 & Chi-squared & 0.888/\textbf{0.994} & 0.903/\textbf{0.994} & 0.918/\textbf{0.994} & 0.934/\textbf{0.994} & 0.952/\textbf{0.994} \\
 & Trunc. normal & 0.881/\textbf{0.987} & 0.891/\textbf{0.987} & 0.900/\textbf{0.987} & 0.909/\textbf{0.987} & 0.920/\textbf{0.987} \\
 & Scaled Beta & 0.819/0.916 & 0.828/0.916 & 0.837/0.926 & 0.847/0.933 & 0.859/\textbf{0.936} \\
\addlinespace
5 & Uniform & 0.872/\textbf{0.990} & 0.885/\textbf{0.990} & 0.905/\textbf{0.990} & 0.919/\textbf{0.990} & 0.938/\textbf{0.990} \\
 & Exponential & 0.902/0.960 & 0.918/0.960 & 0.939/0.960 & 0.971/0.986 & \textbf{1.000}/\textbf{1.000} \\
 & Gamma & 0.903/0.963 & 0.913/0.975 & 0.925/0.987 & 0.938/0.991 & 0.949/\textbf{0.993} \\
 & Chi-squared & 0.888/\textbf{0.994} & 0.903/\textbf{0.994} & 0.918/\textbf{0.994} & 0.934/\textbf{0.994} & 0.952/\textbf{0.994} \\
 & Trunc. normal & 0.886/\textbf{0.987} & 0.900/\textbf{0.987} & 0.910/\textbf{0.987} & 0.918/\textbf{0.987} & 0.932/\textbf{0.987} \\
 & Scaled Beta & 0.900/\textbf{0.992} & 0.908/\textbf{0.992} & 0.915/\textbf{0.992} & 0.924/\textbf{0.992} & 0.933/\textbf{0.992} \\
\addlinespace
7 & Uniform & 0.872/\textbf{0.990} & 0.885/\textbf{0.990} & 0.905/\textbf{0.990} & 0.919/\textbf{0.990} & 0.938/\textbf{0.990} \\
 & Exponential & 0.902/0.970 & 0.939/0.970 & 0.956/0.970 & 0.974/0.986 & \textbf{1.000}/\textbf{1.000} \\
 & Gamma & 0.903/0.963 & 0.913/0.975 & 0.925/0.987 & 0.938/0.991 & 0.949/\textbf{0.993} \\
 & Chi-squared & 0.888/\textbf{0.994} & 0.903/\textbf{0.994} & 0.918/\textbf{0.994} & 0.934/\textbf{0.994} & 0.952/\textbf{0.994} \\
 & Trunc. normal & 0.886/\textbf{0.987} & 0.900/\textbf{0.987} & 0.910/\textbf{0.987} & 0.918/\textbf{0.987} & 0.932/\textbf{0.987} \\
 & Scaled Beta & 0.902/\textbf{0.992} & 0.908/\textbf{0.992} & 0.924/\textbf{0.992} & 0.931/\textbf{0.992} & 0.938/\textbf{0.992} \\
\bottomrule
\multicolumn{7}{l}{\footnotesize Each cell reports Q / DI.}\\
\multicolumn{7}{l}{\footnotesize Bold: largest displayed individual value among the ten entries in each row; three-decimal ties are all bold.}\\
\end{tabular}
}
\vspace{0.4em}

\parbox{0.94\textwidth}{\footnotesize \emph{Notes.} No entry averages across data-generating distributions. Each cell reports the Q / DI robust guarantee divided by the full-information profit optimized over the identical 50-price menu. Realized true-DGP profit is weakly above the reported guarantee. Equality is automatic for $\alpha>0$ and also occurs in the reported $\alpha=0$ instances because a point endpoint of the response family binds at the selected policy.}
\end{table}

For $\alpha=0$, the computation nevertheless evaluates the entire endpoint expected-sales family in Theorem~\ref{thm:alpha0-endpoint}; it does not replace uniform retailer selection by the smallest point demand. In every baseline row, at least one point endpoint and one interval-based tied-price case bind at the selected stock, and the interval-based cases do not reduce expected sales further. The limiting endpoint belongs to the same indexed response family and is not appended as an independent singleton scenario. The numerical equality with a point-demand calculation is therefore a feature of these instances, not a general implication of the uniform-selection model.

Figure~\ref{fig:pointwise-profit} shows that the value of DI is not confined to the final optimum. For $T=3$, $\alpha=0$, and $c=0.2$, the DI profit profile weakly dominates the Q profile across the common price menu and is strictly higher over the decision-relevant region of every DGP. Historical decision information also changes the selected wholesale price in all six baseline cases; for the Gamma and truncated-normal DGPs, DI selects the same menu price as the full-information oracle. Because the current retailer-response model, action set, and cost are held fixed, these policy changes are attributable to the additional information contained in the historical pricing decisions.

The result also has a direct data implication. A transaction record that retains only the retail price and sales outcome discards the information contained in the wholesale term under which that retail price was chosen. Preserving these elements together allows the supplier to exploit the decision information identified by the model.

\begin{figure}[H]
\centering
\includegraphics[width=\textwidth]{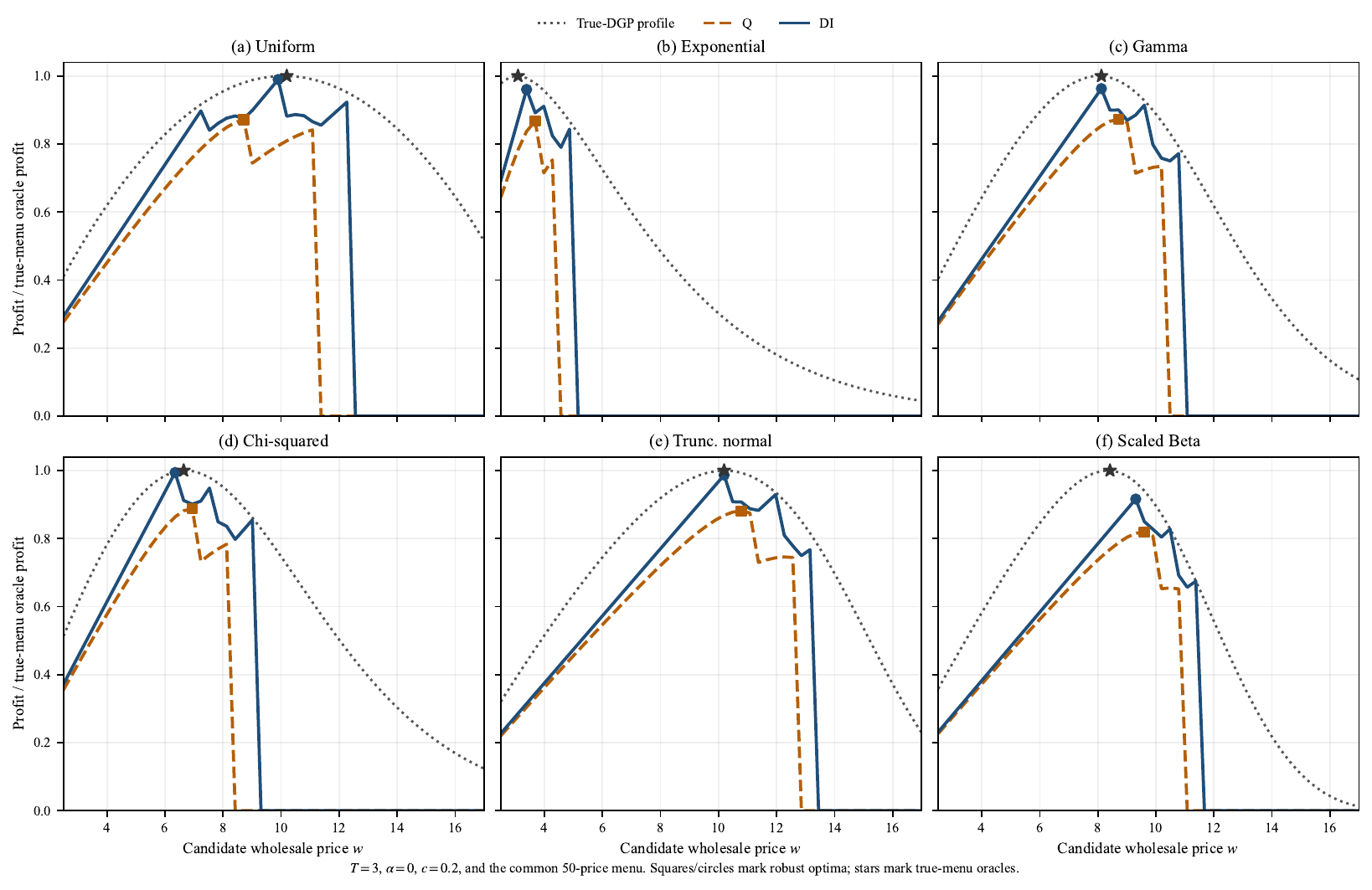}
\caption{Profit profiles across the wholesale-price menu. Each panel reports the Q and DI robust profit at every candidate wholesale price, normalized by the corresponding full-information oracle profit. Squares and circles mark the reoptimized robust policies, and stars mark the oracle policy. The comparison fixes $T=3$, $\alpha=0$, and $c=0.2$.}
\label{fig:pointwise-profit}
\end{figure}

\subsection{Where historical information matters}

Table~\ref{tab:distribution-main} also shows that more observations do not mechanically translate into larger gains. Additional data matter only if they tighten the demand region that influences the worst-case retailer response and, ultimately, the supplier's chosen policy. To separate the number of records from their location, we hold $T=3$ fixed and compare four prespecified, nonnested designs:
\begin{equation}
\begin{aligned}
 \calQ_{\mathrm{low}}&=\{1/8,2/8,3/8\},&
 \calQ_{\mathrm{center}}&=\{3/8,4/8,5/8\},\\
 \calQ_{\mathrm{high}}&=\{5/8,6/8,7/8\},&
 \calQ_{\mathrm{spread}}&=\{1/8,4/8,7/8\}.
\end{aligned}
\label{eq:history-placement-design}
\end{equation}
The first three designs move an equally spaced observation window across the demand curve, whereas the spread design sacrifices local density to cover a wider region. All comparisons use the common price menu, $\alpha=0$, and $c=0.2$.

\begin{figure}[H]
\centering
\includegraphics[width=\textwidth]{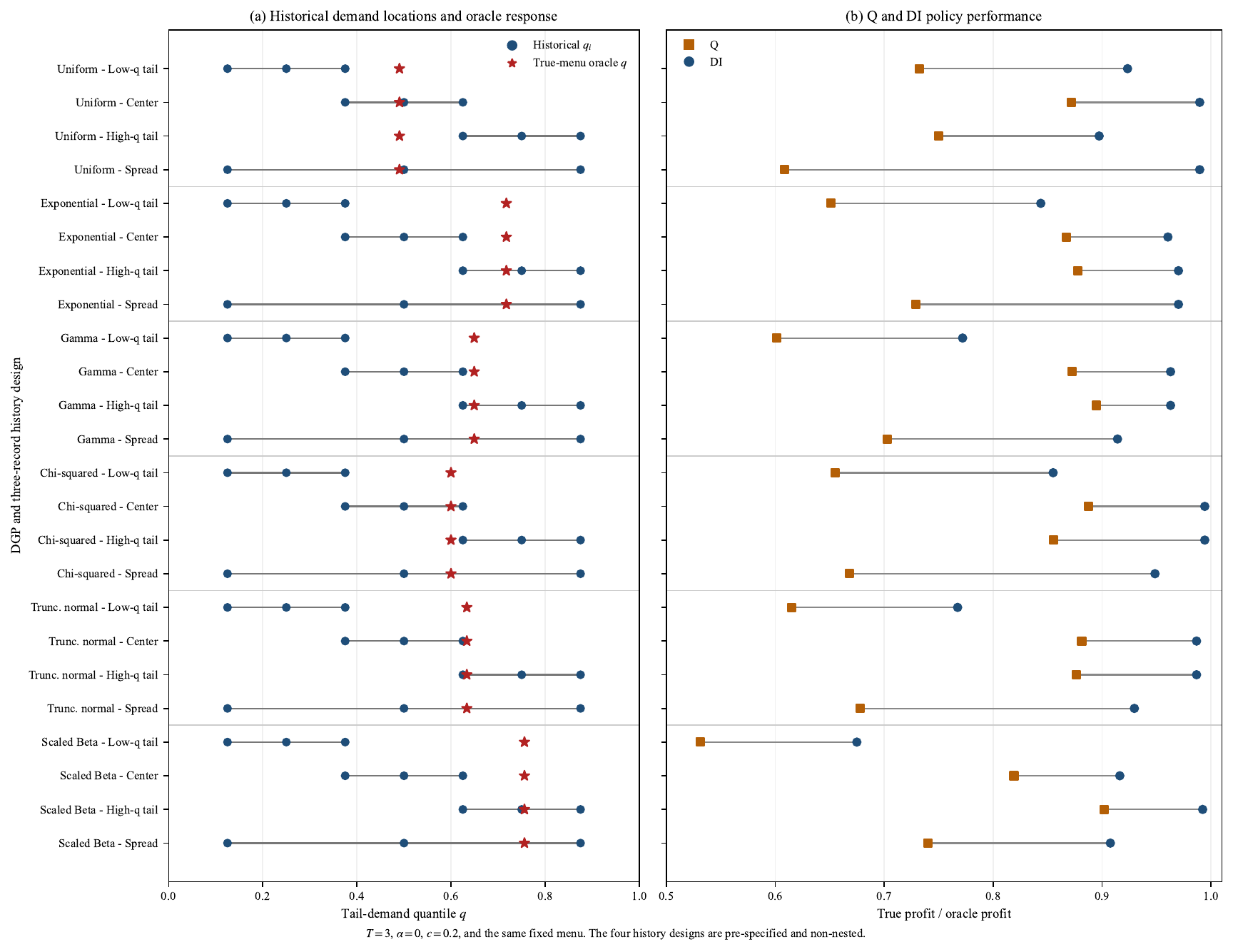}
\caption{Effect of historical information placement. Panel (a) shows the three prespecified historical demand locations and, for reference, the full-information oracle demand. Panel (b) reports the optimized Q and DI profit-to-oracle ratios for each DGP and history design. The horizontal distance between markers measures the incremental value of historical decision information.}
\label{fig:history-placement}
\end{figure}

Figure~\ref{fig:history-placement} confirms that information quality depends on where the historical transactions are observed. DI improves all 24 DGP--placement comparisons, with gains ranging from 6.8 to 38.1 percentage points, but both methods perform best when the historical observations cover the decision-relevant response region. For the scaled Beta DGP, moving the observation window from low to high demand raises the Q/DI ratio from $0.531/0.675$ to $0.902/0.992$. Under uniform demand, the center and spread designs yield $0.872/0.990$ and $0.609/0.990$, respectively. These results show that historical decision information can compensate for sparse interpolation, but it cannot make observations far from the relevant operating region equally useful.

\subsection{Capacity cost and the value of decision information}

We next examine whether the value of historical decision information changes with the supplier's stocking economics. Holding $T=3$ and $\alpha=0$ fixed, we reoptimize Q, DI, and the full-information oracle for $c\in\{0.2,1,2,4\}$.

\begin{table}[H]
\centering
\caption{Effect of capacity cost on Q and DI performance at $\alpha=0$ and $T=3$}
\label{tab:cost-distribution}
\small
\begin{tabular}{lcccc}
\toprule
DGP & $c=0.2$ & $c=1$ & $c=2$ & $c=4$ \\
\midrule
Uniform & 0.872/0.990 & 0.858/0.986 & 0.850/0.975 & 0.839/0.969 \\
Exponential & 0.867/0.960 & 0.885/0.953 & 0.790/0.973 & 0.193/0.593 \\
Gamma & 0.872/0.963 & 0.879/0.963 & 0.885/0.957 & 0.885/0.982 \\
Chi-squared & 0.888/0.994 & 0.889/0.983 & 0.880/0.968 & 0.825/0.984 \\
Trunc. normal & 0.881/0.987 & 0.885/0.986 & 0.887/0.981 & 0.888/0.981 \\
Scaled Beta & 0.819/0.916 & 0.830/0.926 & 0.844/0.937 & 0.873/0.949 \\
\bottomrule
\multicolumn{5}{l}{\footnotesize Each cell reports Q / DI.}\\
\end{tabular}\space\space
\vspace{0.35em}

\parbox{0.94\textwidth}{\footnotesize \emph{Notes.} Each cell reports Q / DI after reoptimizing wholesale price and capacity at the indicated unit capacity cost. Results are not averaged across distributions.}
\end{table}

Table~\ref{tab:cost-distribution} shows that DI improves performance at every reported capacity cost, but the magnitude of the gain varies substantially across demand environments. Capacity cost changes the margin--demand tradeoff and can shift the wholesale price and response region that are relevant to the supplier. As a result, the same historical information can have very different operational value across cost regimes. At $c=4$, for example, the DI improvement is 7.6 percentage points for the scaled Beta DGP and 40.0 points for the shifted exponential DGP. The value of decision information is therefore shaped not only by the data available to the supplier but also by the operating environment in which those data are used.

\subsection{Robustness to condition error}

We next examine whether the value of historical decision information survives departures from exact retailer optimization. Following the optimality-condition model in Section~\ref{subsec:foc-error}, we introduce uncertainty in both the historical pricing decisions and the retailer's response to a new wholesale price. With $T=5$, $\alpha=0$, and $c=0.2$, each replication draws a normalized historical error $u_i$ and sets
\begin{align}
 w_i^{\mathrm{obs}}(\delta)
 &=\phi_F(s_i)-\delta u_i,
 &u_i&\sim\operatorname{Unif}[-1,1],
 \label{eq:error-experiment}\\
 |\phi_F(s)-w|&\leq\delta
 &&\text{for the response to every candidate }w.
 \label{eq:current-error-experiment}
\end{align}
The common radius $\delta$ is set to $0\%$, $5\%$, $10\%$, or $20\%$ of the DGP-specific range of historical pricing conditions. Reusing the same draws $u_i$ as $\delta$ increases makes the historical error bands nested. Q ignores the historical wholesale-price centers but faces the same current-response tolerance as DI. Error-aware DI additionally uses the historical decision bands. As a benchmark, we also report a naive exact-center DI policy that treats the perturbed historical wholesale prices as exact and reverts to Q whenever the resulting history fails the joint-feasibility test. Each selected policy is evaluated against the full joint error set, using 50 replications for each DGP.

\begin{figure}[H]
\centering
\includegraphics[width=\textwidth]{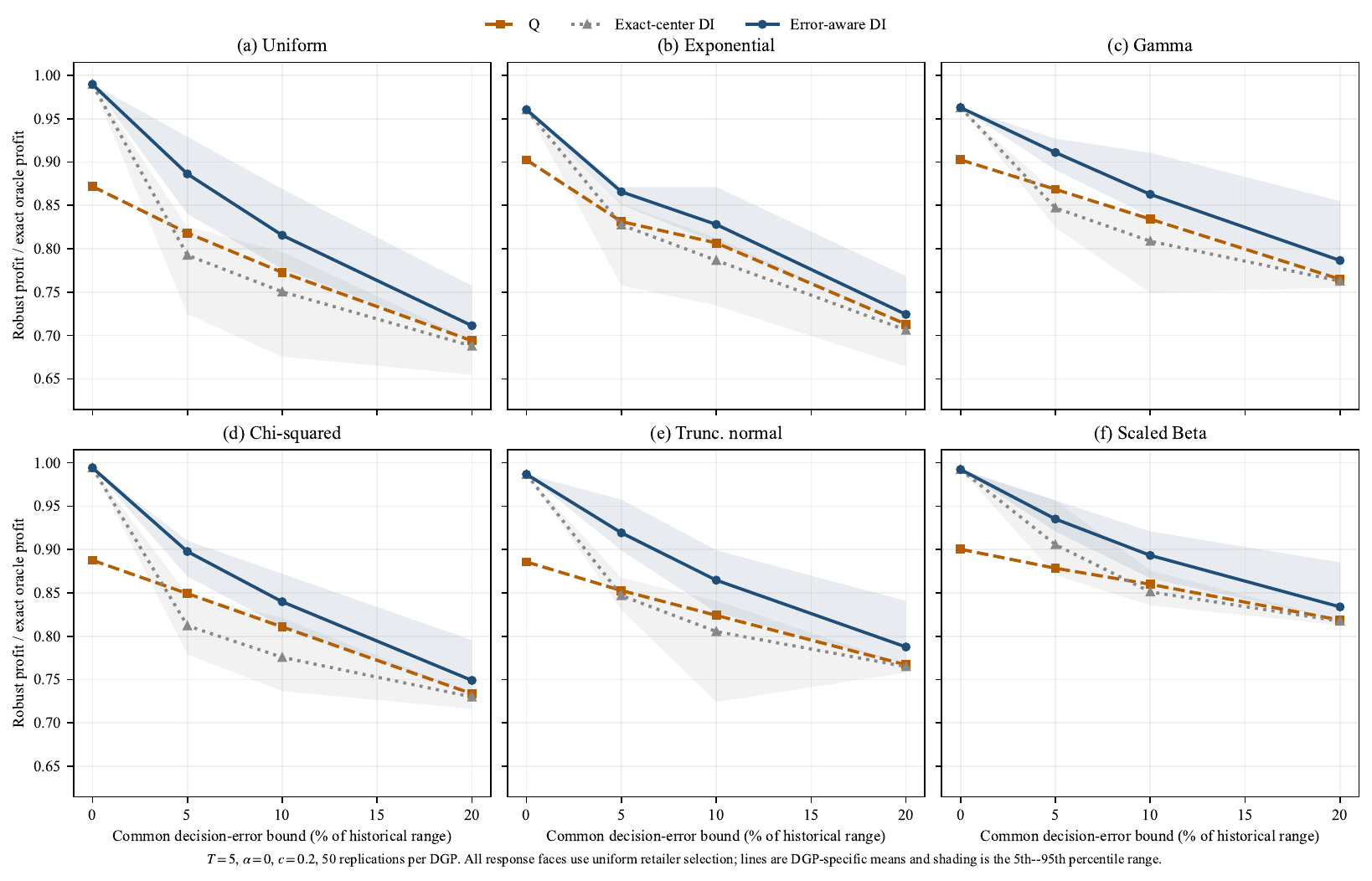}
\caption{Robustness to historical and current condition error. Lines report DGP-specific mean worst-case profit-to-oracle ratios, and shaded regions show the within-DGP 5th--95th percentile ranges. Q and error-aware DI face the same current-response tolerance, while DI additionally uses the historical decision bands.}
\label{fig:error-performance}
\end{figure}

Figure~\ref{fig:error-performance} separates the effects of historical and current condition error. Q deteriorates as $\delta$ increases because a wider current-response tolerance allows more adverse retailer responses even when historical wholesale prices are ignored. Error-aware DI also deteriorates because future behavior becomes less predictable and the historical decision information becomes less precise. The gap between the two curves therefore measures the remaining value of historical pricing decisions under a common level of current-response uncertainty.

The decline is economically meaningful, but the informational advantage remains visible. Increasing the common radius from zero to $20\%$ lowers Q's mean worst-case ratio by 8.2--18.9 percentage points across the six DGPs. Error-aware DI remains 3.4--6.8 points above Q at the $5\%$ radius and 1.1--2.2 points above Q at the $20\%$ radius. At the largest radius, 82--90\% of the naive exact-center histories fail the joint-feasibility test, whereas error-aware DI continues to exploit the information contained in the historical decision bands and exceeds the screened benchmark by 1.7--2.4 points. These results illustrate the importance of allowing for bounded condition error rather than either discarding historical pricing decisions or treating them as exact.

\subsection{Robustness to price error}
\label{subsec:price-error-experiment}

Finally, we implement the second error mechanism in Section~\ref{subsec:decision-space-error}. Price error keeps the observed transaction outcomes $(s_i,q_i)$ fixed but allows the observed retail price to lie near an unobserved exact optimum. To match the regular-demand specification used in the CE experiment, we set $T=5$, $\alpha=0$, and $c=0.2$. Replication $j$ draws $u_{ij}\sim\operatorname{Unif}[-1,1]$ and generates
\begin{align}
 \widetilde s_{ij}(\eta)
 &=s_i-\eta u_{ij},
 &
 w_{ij}^{\mathrm{obs}}(\eta)
 &=\phi_F\!\left(\widetilde s_{ij}(\eta)\right).
 \label{eq:price-error-experiment}
\end{align}
The true DGP therefore belongs to the historical PE ambiguity set because $|s_i-\widetilde s_{ij}(\eta)|\leq\eta$, while $q_i=\bar F(s_i)$ remains the demand actually observed at $s_i$. We use the common historical and current radius $\eta_0=\eta$, set to $0\%$, $5\%$, $10\%$, or $20\%$ of the DGP-specific historical retail-price range, and reuse the same normalized draws across radii. Thus, for a new wholesale price $w_0$, the realized current price may lie within $\eta$ of an exact optimum. As in the CE experiment, we use 50 replications for each DGP and the common 50-price wholesale menu. The matching percentage labels do not make CE and PE magnitudes directly comparable: CE is normalized in virtual-value units, whereas PE is normalized in retail-price units.

We use the direct set-valued PE convention in \eqref{eq:decision-space-qmin}. In particular, Nature may choose any realized price in the $\eta_0$-neighborhood of the exact-optimal set. This convention remains well defined when $\alpha=0$ permits several tied optimal prices and is more conservative than first selecting uniformly from the tied interval and then perturbing that draw. The latter is the alternative in \eqref{eq:uniform-then-perturb}, not the specification used here.

The $\alpha=0$ PE calculation has a convenient reciprocal-tail specialization. Write
\[
 X(s)=\frac{1}{\bar F(s)}-1,
\]
which is nonnegative, nondecreasing, and convex under regularity. For $a_i=s_i-\eta$ and $b_i=s_i+\eta$, an exact optimum for $w_i^{\mathrm{obs}}$ lies in $[a_i,b_i]$ precisely when the one-sided derivatives satisfy
\begin{equation}
 X'_{-}(a_i)\leq \frac{1+X(a_i)}{a_i-w_i^{\mathrm{obs}}}
 \quad\text{if }a_i>w_i^{\mathrm{obs}},
 \qquad
 X'_{+}(b_i)\geq \frac{1+X(b_i)}{b_i-w_i^{\mathrm{obs}}},
 \label{eq:price-error-reciprocal-crossing}
\end{equation}
with the first inequality automatic when $a_i\leq w_i^{\mathrm{obs}}$. Conditional on a trial current exact optimum $r>w_0$, the same two inequalities are imposed at $r$. Since demand is nonincreasing, Nature uses the adverse realized price $y=r+\eta_0$. On the combined grid of observed prices, PE-band endpoints, $r$, and $y$, these derivative brackets and convexity are linear restrictions on the knot values of $X$. Maximizing $X(y)$ is therefore a linear program, with implied worst-case demand $1/\{1+X(y)\}$. A one-dimensional outer search over $r$ implements the finite reduction in Theorem~\ref{thm:decision-space-reduction}; an unbounded objective is recorded as the unattained zero-demand closure. Q, zero-radius PE, and Exact-center DI use the strict cellwise boundary search at the menu-design stage. For positive-radius PE-aware DI, a faster structural search screens the menu, after which every selected policy is re-evaluated and certified by the strict search.

We compare Q, PE-aware DI, and an Exact-center DI benchmark that incorrectly treats $s_i$ as the exact optimum for $w_{ij}^{\mathrm{obs}}$. All three methods face the same current radius $\eta_0=\eta$. Q discards the historical wholesale prices but is reoptimized at every radius because its current response remains subject to PE. PE-aware DI additionally uses the historical price neighborhoods. The Exact-center benchmark uses zero historical radius, retains the common current radius, and reverts to the corresponding Q policy if its exact historical equalities are jointly infeasible. Every feasible non-Q policy is stress-tested against the complete historical-and-current PE set. The gap between PE-aware DI and Q therefore measures the value of bounded historical decision information under a common level of current response uncertainty.

\begin{figure}[H]
\centering
\includegraphics[width=\textwidth]{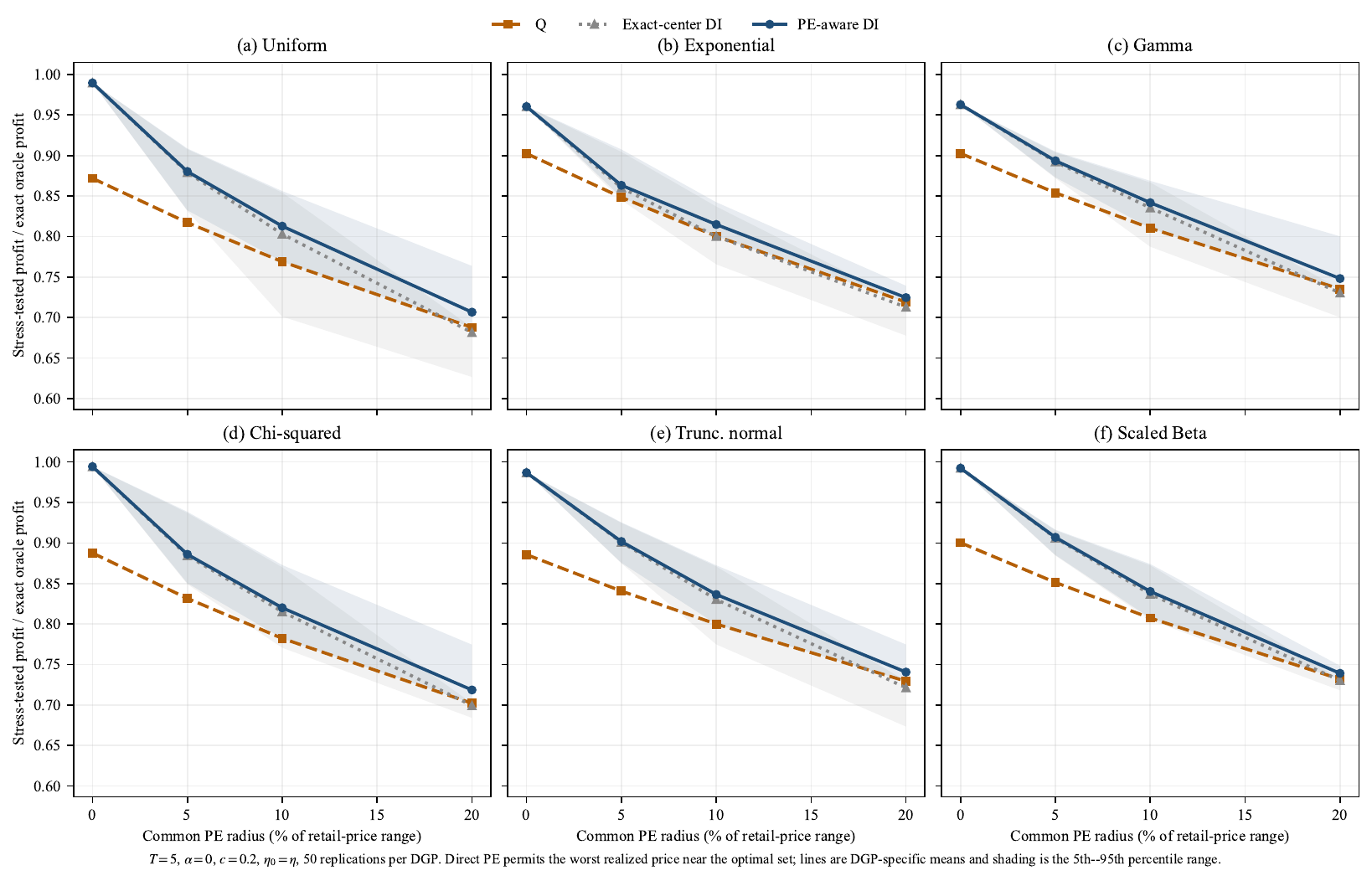}
\caption{Robustness to historical and current price error at $\alpha=0$. Lines report DGP-specific mean stress-tested profit-to-oracle ratios, and shaded regions show the within-DGP 5th--95th percentile ranges. All methods face the common current radius $\eta_0=\eta$; PE-aware DI additionally uses the historical price neighborhoods, Exact-center DI ignores their historical error and is screened for joint feasibility, and Q discards the historical wholesale prices.}
\label{fig:price-error-performance}
\end{figure}

Figure~\ref{fig:price-error-performance} shows that current PE affects Q even though Q discards the historical wholesale prices. Because all three methods face $\eta_0=\eta$, Q is reoptimized at every radius and declines strictly at every reported step in all six panels. Increasing the common radius from zero to $20\%$ lowers Q's profit-to-oracle ratio by 15.6--18.5 percentage points across the six DGPs. In particular, the shifted-exponential Q ratio falls from 0.902 to 0.719 rather than remaining flat. At $\eta=\eta_0=0$, PE-aware DI and Exact-center DI coincide under the direct PE convention and improve over Q by 5.8--11.8 points. The PE-aware advantage over Q narrows to 1.5--6.3 points at $5\%$, 1.4--4.4 points at $10\%$, and 0.6--1.9 points at $20\%$, but remains positive in every panel.

The Exact-center comparison shows why PE should be modeled rather than silently treating each observed price as optimal. At the $5\%$ radius, every Exact-center history passes the joint-feasibility screen, yet PE-aware DI still improves its stress-tested ratio by 0.1--0.3 points. At $10\%$, the fallback rate is 0--8\% across DGPs and the PE-aware gain over the screened benchmark is 0.3--1.5 points. At $20\%$, 84--88\% of Exact-center histories revert to the corresponding same-radius Q policy, while PE-aware DI remains 0.9--2.5 points higher. Thus the advantage is not created by giving Q a fixed response or by screening alone: every comparator faces the same current price uncertainty, whereas PE-aware DI additionally uses the valid historical price neighborhoods.

Taken together, the experiments show that historical decision information is most valuable when transaction histories are sparse but cover the response region relevant to the supplier's decision. Its operational value also depends on the supplier's cost environment, and it remains meaningful under moderate historical and current decision error.

\section{Conclusion}

This paper studies how a supplier can learn from a small number of historical transactions when a downstream retailer has better information about end-market demand. The central distinction is between observing a market outcome and observing the decision context that generated it. A retail price and purchase probability reveal one point on demand. The wholesale price under which that retail price was chosen further restricts which surrounding demand behavior is consistent with the retailer's decision.

The quantile-only method Q uses only the observed demand points. The decision-informed method DI also uses the historical wholesale prices. With exact decisions, DI narrows the set of future retailer responses and weakly improves the supplier's worst-case guarantee. When the retailer has a unique best price, the supplier needs only the lowest feasible demand for each candidate wholesale price. When several prices are tied, the supplier evaluates a finite collection of induced-demand cases and solves a one-dimensional stock problem.

The analysis of imperfect decisions shows why the source of error matters. CE is appropriate when the observed price is correct but its link to the wholesale price is only approximate. PE is appropriate when the observed price may differ from a nearby exact optimum. These models use different units, retain different information, and generally lead to different sets of plausible demand curves. Larger tolerances can erode the value of historical decision information; once an error set is large enough to reproduce all behavior already allowed by Q, that incremental value disappears.

The practical message is straightforward. Firms should preserve wholesale prices, retail prices, and purchase rates as linked transaction records. They should also diagnose why a retailer decision may be imperfect before choosing an error tolerance. Once the appropriate set of future downstream demand is constructed, robust wholesale pricing and stocking follow from a common operating problem.

Several extensions are natural. Sampling error in purchase probabilities can be represented by intervals; nonstationarity can be handled with regime-specific or weighted histories; and richer supply economics can include salvage, backlogging, or capacity adjustment. These extensions change the statistical or operating layer but preserve the paper's main information principle: decisions made by a better-informed downstream partner can reveal useful demand restrictions that are absent from the sales outcome alone.

\section*{Supplemental Material}
The replication package contains the joint-feasibility, condition-error, and price-error solvers; unit tests; the 4,500-row main response cache; raw policy outputs; the history-placement, CE, and PE experiment files; generated tables; and all figure sources. Machine-readable manifests record the common menu, validation checks, solver tolerances, and output hashes. The PE implementation uses the $\alpha=0$ reciprocal-tail specialization of the finite reduction in Theorem~\ref{thm:decision-space-reduction}, imposes the common historical and current radius, and preserves all replication-level latent-optimum draws.

\bibliographystyle{plainnat}
\bibliography{references}

@article{AkanAtaLariviere2011,
  author={Akan, Mustafa and Ata, Baris and Lariviere, Martin A.},
  title={Asymmetric Information and Economies of Scale in Service Contracting},
  journal={Manufacturing \& Service Operations Management},
  year={2011}, volume={13}, number={1}, pages={58--72}
}

@article{AllouahBahamouBesbes2022,
  author={Allouah, Amine and Bahamou, Abdellah and Besbes, Omar},
  title={Pricing with Samples},
  journal={Operations Research}, year={2022}, volume={70}, number={2}, pages={1088--1104}
}

@article{AllouahBahamouBesbes2023,
  author={Allouah, Amine and Bahamou, Abdellah and Besbes, Omar},
  title={Optimal Pricing with a Single Point},
  journal={Management Science}, year={2023}, volume={69}, number={10}, pages={5866--5882}
}

@article{AswaniShenSiddiq2018,
  author={Aswani, Anil and Shen, Zuo-Jun Max and Siddiq, Auyon},
  title={Inverse Optimization with Noisy Data},
  journal={Operations Research}, year={2018}, volume={66}, number={3}, pages={870--892}
}

@article{BergemannSchlag2011,
  author={Bergemann, Dirk and Schlag, Karl H.},
  title={Robust Monopoly Pricing},
  journal={Journal of Economic Theory}, year={2011}, volume={146}, number={6}, pages={2527--2543}
}

@article{BesbesZeevi2009,
  author={Besbes, Omar and Zeevi, Assaf},
  title={Dynamic Pricing without Knowing the Demand Function: Risk Bounds and Near-Optimal Algorithms},
  journal={Operations Research}, year={2009}, volume={57}, number={6}, pages={1407--1420}
}

@techreport{CMA2024LoyaltyPricing,
  author={{Competition and Markets Authority}},
  title={Review of Loyalty Pricing in the Groceries Sector: Findings Report},
  institution={Competition and Markets Authority}, address={London}, year={2024}
}

@article{ChanMahmoodZhu2025,
  author={Chan, Timothy C. Y. and Mahmood, Rafid and Zhu, Ian Yihang},
  title={Inverse Optimization: Theory and Applications},
  journal={Operations Research}, year={2025}, volume={73}, number={2}, pages={1046--1074}
}

@article{CheungSimchiLeviWang2017,
  author={Cheung, Wang Chi and Simchi-Levi, David and Wang, He},
  title={Dynamic Pricing and Demand Learning with Limited Price Experimentation},
  journal={Operations Research}, year={2017}, volume={65}, number={6}, pages={1722--1731}
}

@article{CorbettZhouTang2004,
  author={Corbett, Charles J. and Zhou, Deming and Tang, Christopher S.},
  title={Designing Supply Contracts: Contract Type and Information Asymmetry},
  journal={Management Science}, year={2004}, volume={50}, number={4}, pages={550--559}
}

@article{DelageYe2010,
  author={Delage, Erick and Ye, Yinyu},
  title={Distributionally Robust Optimization under Moment Uncertainty with Application to Data-Driven Problems},
  journal={Operations Research}, year={2010}, volume={58}, number={3}, pages={595--612}
}

@article{GaoKleywegt2023,
  author={Gao, Rui and Kleywegt, Anton J.},
  title={Distributionally Robust Stochastic Optimization with Wasserstein Distance},
  journal={Mathematics of Operations Research}, year={2023}, volume={48}, number={2}, pages={603--655}
}

@techreport{GeHeWangWang2025,
  author={Ge, Yan and He, Simai and Wang, Zhen and Wang, Zizhuo},
  title={Optimal Robust Pricing with Minimal Information},
  institution={SSRN}, year={2025}
}

@article{HanasusantoKuhn2018,
  author={Hanasusanto, Grani A. and Kuhn, Daniel},
  title={Conic Programming Reformulations of Two-Stage Distributionally Robust Linear Programs over Wasserstein Balls},
  journal={Operations Research}, year={2018}, volume={66}, number={3}, pages={849--869}
}

@article{HuangYuZhao2026,
  author={Huang, Guohua and Yu, Guodong and Zhao, Xuejun},
  title={Refining Data-Driven Upfront Reservation Discount Pricing via Inverse Inferring Newsvendor Transactions},
  journal={Production and Operations Management}, year={2026}, volume={35}, number={5}, pages={1766--1784}
}

@article{KalkanciChenErhun2011,
  author={Kalkanci, Basak and Chen, Kay-Yut and Erhun, Feryal},
  title={Contract Complexity and Performance under Asymmetric Demand Information: An Experimental Evaluation},
  journal={Management Science}, year={2011}, volume={57}, number={4}, pages={689--704}
}

@incollection{KuhnEtAl2019,
  author={Kuhn, Daniel and Mohajerin Esfahani, Peyman and Nguyen, Viet Anh and Shafieezadeh-Abadeh, Soroosh},
  title={Wasserstein Distributionally Robust Optimization: Theory and Applications in Machine Learning},
  booktitle={INFORMS Tutorials in Operations Research}, publisher={INFORMS}, year={2019}, pages={130--166}
}

@article{LarivierePorteus2001,
  author={Lariviere, Martin A. and Porteus, Evan L.},
  title={Selling to the Newsvendor: An Analysis of Price-Only Contracts},
  journal={Manufacturing \& Service Operations Management}, year={2001}, volume={3}, number={4}, pages={293--305}
}

@article{LevyEtAl1997,
  author={Levy, Daniel and Bergen, Mark and Dutta, Shantanu and Venable, Robert},
  title={The Magnitude of Menu Costs: Direct Evidence from Large U.S. Supermarket Chains},
  journal={Quarterly Journal of Economics}, year={1997}, volume={112}, number={3}, pages={791--825}
}

@article{MohajerinEsfahaniKuhn2018,
  author={Mohajerin Esfahani, Peyman and Kuhn, Daniel},
  title={Data-Driven Distributionally Robust Optimization Using the Wasserstein Metric: Performance Guarantees and Tractable Reformulations},
  journal={Mathematical Programming}, year={2018}, volume={171}, number={1--2}, pages={115--166}
}

@article{NambiarSimchiLeviWang2019,
  author={Nambiar, Mila and Simchi-Levi, David and Wang, He},
  title={Dynamic Learning and Pricing with Model Misspecification},
  journal={Management Science}, year={2019}, volume={65}, number={11}, pages={4980--5000}
}

@article{NasserTurcic2019,
  author={Nasser, Sherif and Turcic, Danko},
  title={Temporary Contract Adjustment to a Retailer with a Private Demand Forecast},
  journal={Management Science}, year={2019}, volume={65}, number={1}, pages={209--229}
}

@article{PerakisRoels2007,
  author={Perakis, Georgia and Roels, Guillaume},
  title={The Price of Anarchy in Supply Chains: Quantifying the Efficiency of Price-Only Contracts},
  journal={Management Science}, year={2007}, volume={53}, number={8}, pages={1249--1268}
}

@article{Popescu2005,
  author={Popescu, Ioana},
  title={A Semidefinite Programming Approach to Optimal-Moment Bounds for Convex Classes of Distributions},
  journal={Mathematics of Operations Research}, year={2005}, volume={30}, number={3}, pages={632--657}
}

@article{YuDongSun2025,
  author={Yu, Guodong and Dong, Pengcheng and Sun, Huiping},
  title={Refined Wasserstein Distributionally Robust Optimization for Contract Pricing: The Value of Optimality Conditions in Transactions},
  journal={INFORMS Journal on Computing}, year={2025}, volume={38}, number={2}, pages={397--413}
}

@article{ZbarackiEtAl2004,
  author={Zbaracki, Mark J. and Ritson, Mark and Levy, Daniel and Dutta, Shantanu and Bergen, Mark},
  title={Managerial and Customer Costs of Price Adjustment: Direct Evidence from Industrial Markets},
  journal={Review of Economics and Statistics}, year={2004}, volume={86}, number={2}, pages={514--533}
}

@article{ZhaoHaskellYu2024,
  author={Zhao, Xuejun and Haskell, William B. and Yu, Guodong},
  title={Supply Chain Contracts in the Small Data Regime},
  journal={Manufacturing \& Service Operations Management}, year={2024}, volume={26}, number={4}, pages={1387--1401}
}

@article{denBoer2015,
  author  = {Arnoud V. den Boer},
  title   = {Dynamic Pricing and Learning: Historical Origins, Current Research, and New Directions},
  journal = {Surveys in Operations Research and Management Science},
  volume  = {20},
  number  = {1},
  pages   = {1--18},
  year    = {2015},
  doi     = {10.1016/j.sorms.2015.03.001}
}

\clearpage

\appendix
\singlespacing

\section{Alternative Coordinate Representation and Computation}
\label{app:technical}

This appendix provides an alternative coordinate-based representation, computational formulas, and proofs. The main text is complete in the original price--demand coordinates; the material below offers a second way to derive and implement the same results.

\subsection{Transformed representation and historical feasibility}
\label{app:transformed-representation}
\label{app:transform}

Let $p(q)=\barF^{-1}(q)$ denote inverse tail demand and write $\rho=1-\alpha$. Define
\begin{align}
 x_\alpha(q)&=
 \begin{cases}
  (q^{-\rho}-1)/\rho,&0\leq\alpha<1,\\
  -\log q,&\alpha=1,
 \end{cases}
 \label{eq:x-transform}\\
 v_\alpha(x)&=p(Q_\alpha(x)),
 \label{eq:v-transform}
\end{align}
where $Q_\alpha$ is the inverse of $x_\alpha$.

\begin{proposition}[Transaction data in transformed demand]
\label{prop:transform}
Suppose $F$ has positive density and a locally absolutely continuous virtual value on the relevant price region. Then
\begin{align}
 F\text{ is $\alpha$-regular}
 &\Longleftrightarrow
 v_\alpha\text{ is increasing and concave},
 \label{eq:transform-equivalence}\\
 x_i&=x_\alpha(q_i)=\bar\Gamma_\alpha^{-1}(q_i),\qquad v_\alpha(x_i)=s_i,
 \label{eq:historical-point}\\
 v_\alpha'(x_i)&=m_i=q_i^{1-\alpha}(s_i-w_i)
 \quad\text{under DI}.
 \label{eq:historical-hermite}
\end{align}
\end{proposition}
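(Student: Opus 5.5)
The plan is a direct change-of-variables computation. Throughout, write $q=\barF(s)$ and $s=p(q)$, so that $p'(q)=-1/f(p(q))$.

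\emph{Step 1: derivatives of the transform.} For $\alpha<1$ we have $x=x_\alpha(q)=(q^{-\rho}-1)/\rho$. Hence $dx/dq=-q^{-\rho-1}$ and $q=Q_\alpha(x)=(1+\rho x)^{-1/\rho}$. The last identity is exactly $\bar\Gamma_\alpha(x)$, which gives $x_\alpha=\bar\Gamma_\alpha^{-1}$. For $\alpha=1$ the same holds with $x=-\log q$ and $Q_1(x)=e^{-x}$. This settles \eqref{eq:historical-point}: $x_i=\bar\Gamma_\alpha^{-1}(q_i)$, and $v_\alpha(x_i)=p(q_i)=s_i$ because $\barF(s_i)=q_i$.

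\emph{Step 2: chain rule for $v_\alpha$.} By the chain rule,
\[
v_\alpha'(x)=p'(q)\,\frac{dq}{dx}=\frac{-1}{f(s)}\cdot\bigl(-q^{1+\rho}\bigr)=\frac{q^{2-\alpha}}{f(s)}.
\]
This is positive because the density is positive, so $v_\alpha$ is increasing. Rewriting in terms of $\psi_\alpha$,
\[
v_\alpha'(x)=q^{1-\alpha}\cdot\frac{\barF(s)}{f(s)}=q^{1-\alpha}\bigl[(1-\alpha)s-\psi_\alpha(s\mid F)\bigr].
\]
Under DI, \eqref{eq:historical-foc} gives $\barF(s_i)/f(s_i)=s_i-w_i$. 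Substituting yields $v_\alpha'(x_i)=q_i^{1-\alpha}(s_i-w_i)$, which is \eqref{eq:historical-hermite}. The case $\alpha=1$ is identical with $q^{1-\alpha}=1$.

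\emph{Step 3: the equivalence \eqref{eq:transform-equivalence}.} This is the main step. Since the virtual value is locally absolutely continuous, so is $v_\alpha'$, and I will differentiate it once more in $x$. Write $g(q)=q^{1-\alpha}\barF/f$, viewed as a function of $q$ along $s=p(q)$. Then
\[
\frac{d}{dx}v_\alpha'=g'(q)\,\frac{dq}{dx}=-q^{2-\alpha}g'(q).
\]
Compute $g'(q)$ by the product rule, using $ds/dq=-1/f$:
\[
g'(q)=(1-\alpha)q^{-\alpha}\frac{\barF}{f}+q^{1-\alpha}\frac{d}{ds}\Bigl(\frac{\barF}{f}\Bigr)\Bigl(-\frac{1}{f}\Bigr).
\]
Now use $\psi_\alpha'=(1-\alpha)-\frac{d}{ds}(\barF/f)$, so $\frac{d}{ds}(\barF/f)=(1-\alpha)-\psi_\alpha'$. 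Since $\barF=q$, the first term becomes $(1-\alpha)q^{1-\alpha}/f$. The two $(1-\alpha)$ terms then cancel, leaving
\[
g'(q)=\frac{q^{1-\alpha}}{f}\,\psi_\alpha'(s).
\]
Therefore $v_\alpha''(x)=-q^{3-2\alpha}\psi_\alpha'(s)/f(s)$, which holds almost everywhere.

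The prefactor $q^{3-2\alpha}/f(s)$ is strictly positive, and $x\mapsto s$ is an increasing bijection. Hence $\psi_\alpha'\ge0$ a.e. if and only if $v_\alpha''\le0$ a.e. By absolute continuity, the a.e. sign conditions are equivalent to $\psi_\alpha$ being nondecreasing and to $v_\alpha'$ being nonincreasing, i.e.\ $v_\alpha$ concave. This gives the claim.

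The main obstacle I expect is this second-derivative bookkeeping, specifically confirming that the $(1-\alpha)$ terms cancel. I would double-check the computation on the boundary case $\bar\Gamma_\alpha$ itself. There $\psi_\alpha$ should be constant, and correspondingly $v_\alpha$ should be affine in $x$. This matches the construction of $\bar H_\alpha$ in \eqref{eq:historical-gp-boundary}, which is linear in $x$ with slope $q^{1-\alpha}(r-w)$.
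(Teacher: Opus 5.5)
Your proposal is correct and follows essentially the paper's route. Both arguments use the chain-rule identity $v_\alpha'(x)=q^{1-\alpha}\bar F(s)/f(s)>0$, then differentiate $v_\alpha'$ a second time along the curve so that the sign of $v_\alpha''$ matches the sign of $\psi_\alpha'$ (the paper differentiates $q^{\rho}r(s)$ in $s$ rather than in $q$, which is the same computation), and finally substitute $\phi_F(s_i)=w_i$ at the historical points; your explicit check that $x_\alpha=\bar\Gamma_\alpha^{-1}$ and your remarks on absolute continuity and the a.e.\ chain rule are somewhat more careful than the paper's.
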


For the historical record, define
\begin{equation}
x_i=\bar\Gamma_\alpha^{-1}(q_i),
\qquad
D_i=\frac{s_{i+1}-s_i}{x_{i+1}-x_i},
\qquad i=1,\ldots,T-1,
\label{eq:historical-secants-main}
\end{equation}
and, under DI,
\begin{equation}
m_i=q_i^{1-\alpha}(s_i-w_i),
\qquad i=1,\ldots,T.
\label{eq:historical-slopes-main}
\end{equation}
The finite consistency tests used in Proposition~\ref{prop:history-feasible} are
\begin{equation}
D_1\ge D_2\ge\cdots\ge D_{T-1}>0
\label{eq:Q-feas}
\end{equation}
for Q, and
\begin{equation}
m_i\ge D_i\ge m_{i+1}>0,
\qquad i=1,\ldots,T-1,
\label{eq:DI-feas}
\end{equation}
for DI.

\subsection{Transformation identities and boundary tails}
\label{app:transform-details}

The normalized generalized-Pareto tail used throughout the paper is
\begin{equation}
\bar\Gamma_\alpha(t)=
\begin{cases}
[1+(1-\alpha)t]^{-1/(1-\alpha)},&0\le\alpha<1,\\
e^{-t},&\alpha=1,
\end{cases}
\label{eq:gp-primitive}
\end{equation}
on its natural domain. Given two observations $(a,q_a)$ and $(b,q_b)$ with $a<b$ and $q_a>q_b$, define
\begin{equation}
\ell_\alpha(a,q_a;b,q_b)
=a+(b-a)\frac{\bar\Gamma_\alpha^{-1}(1/q_a)}{\bar\Gamma_\alpha^{-1}(q_b/q_a)}
\label{eq:gp-left-endpoint}
\end{equation}
and
\begin{equation}
\bar G_\alpha(v\mid a,q_a,b,q_b)=
\begin{cases}
1,&v<\ell_\alpha(a,q_a;b,q_b),\\[1mm]
q_a\bar\Gamma_\alpha\!\left(\bar\Gamma_\alpha^{-1}\!\left(\dfrac{q_b}{q_a}\right)\dfrac{v-a}{b-a}\right),&v\ge\ell_\alpha(a,q_a;b,q_b).
\end{cases}
\label{eq:gp-interpolation}
\end{equation}

Definition~\ref{def:alpha-regular} uses the $\alpha$-virtual value
\begin{equation}
\psi_\alpha(s)=(1-\alpha)s-\frac{\barF(s)}{f(s)}
 =\phi_F(s)-\alpha s.
 \label{eq:alpha-virtual}
\end{equation}
Its monotonicity is equivalent to
\begin{equation}
 \phi_F(s')-\phi_F(s)\geq\alpha(s'-s),
 \qquad s'>s.
 \label{eq:asr}
\end{equation}
For inverse tail demand $p(q)=\barF^{-1}(q)$,
\begin{equation}
 p'(q)=-\frac{1}{f(p(q))},
 \qquad
 \phi_F(p(q))=p(q)+qp'(q).
 \label{eq:inverse-identities}
\end{equation}

The inverse of the coordinate $x_\alpha$ in \eqref{eq:x-transform} is
\begin{equation}
 Q_\alpha(x)=
 \begin{cases}
  [1+(1-\alpha)x]^{-1/(1-\alpha)},&0\leq\alpha<1,\\
  e^{-x},&\alpha=1.
 \end{cases}
 \label{eq:Q-transform}
\end{equation}
Writing $q=Q_\alpha(x)$ and $s=v_\alpha(x)$ gives
\begin{align}
 v_\alpha'(x)&=q^{1-\alpha}\{s-\phi_F(s)\},
 \label{eq:v-slope}\\
 \phi_F(v_\alpha(x))
 &=v_\alpha(x)-[1+(1-\alpha)x]v_\alpha'(x).
 \label{eq:phi-transform}
\end{align}

At transaction $i$, the decision-implied tangent and its concavity bound are
\begin{align}
 L_i(x)&=s_i+m_i(x-x_i),
 \label{eq:historical-tangent}\\
 v_\alpha(x)&\leq L_i(x).
 \label{eq:tangent-majorization}
\end{align}
Let $r_i=s_i-w_i$. Equality in this tangent has the original-scale form
\begin{equation}
 s-s_i=\frac{r_i}{1-\alpha}
 \left[\left(\frac{q_i}{q}\right)^{1-\alpha}-1\right],
 \qquad 0\leq\alpha<1,
 \label{eq:affine-boundary-inverse}
\end{equation}
or, equivalently,
\begin{align}
 q&=q_i\left[1+\frac{(1-\alpha)(s-s_i)}{r_i}\right]^{-1/(1-\alpha)},
 &&0\leq\alpha<1,
 \label{eq:gp-boundary}\\
 q&=q_i\exp\left\{-\frac{s-s_i}{r_i}\right\},
 &&\alpha=1.
 \label{eq:exp-boundary}
\end{align}
For $\alpha<1$, the generalized-Pareto expression is used only where its bracketed term is strictly positive. These are boundary completions from one point and tangent; admissibility still requires consistency with the complete history.

\subsection{Candidate insertion and boundary cells}
\label{app:response-details}

A candidate response $(r,q)$ to $w_0$ in the cell $[s_i,s_{i+1}]$ generates the boundary $\bar H_{\alpha,0}(\cdot\mid r,q,w_0):=\bar H_\alpha(\cdot\mid r,q,w_0)$ in \eqref{eq:current-response-gp}. The two neighboring observations define the implicit lower limits in \eqref{eq:left-response-boundary-main}--\eqref{eq:right-response-boundary-main}. Their closed forms are, for $0<\alpha<1$,
\begin{align}
L_{\alpha,i}(r;w_0)
&=q_i\left[1-(1-\alpha)\frac{r-s_i}{r-w_0}\right]^{1/(1-\alpha)},
\label{eq:left-response-boundary-explicit}\\
R_{\alpha,i+1}(r;w_0)
&=q_{i+1}\left[1+(1-\alpha)\frac{s_{i+1}-r}{r-w_0}\right]^{1/(1-\alpha)}.
\label{eq:right-response-boundary-explicit}
\end{align}
At $\alpha=1$, the corresponding limits are
\[
L_{1,i}(r;w_0)=q_i\exp\!\left(-\frac{r-s_i}{r-w_0}\right),
\qquad
R_{1,i+1}(r;w_0)=q_{i+1}\exp\!\left(\frac{s_{i+1}-r}{r-w_0}\right).
\]
These formulas give the original-coordinate lower boundary in \eqref{eq:q-response-lower-main}. They are necessary local conditions; the complete-history test below is also required.

If a Q-feasible pivot $(r,q_r)$ lies in $[s_i,s_{i+1}]$, its original-scale boundary construction replaces the direct interpolation in that cell by
\begin{equation}
 \bar F_{\alpha}^{Q}(v\mid r,q_r)=
 \begin{cases}
  \bar G_\alpha(v\mid s_i,q_i,r,q_r),&s_i\leq v<r,\\
  \bar G_\alpha(v\mid r,q_r,s_{i+1},q_{i+1}),&r\leq v<s_{i+1},\\
  \bar G_{\alpha,j}(v),&v\in[s_j,s_{j+1}],\ j\neq i,
 \end{cases}
 \label{eq:q-gp-composite}
\end{equation}
with the appropriate one-sided completions outside the observed range. DI uses the same pieces and imposes the historical tangent boundaries whenever they bind.

Represent a candidate response by $x=x_\alpha(q)$, retail price $s$, and transformed tangent $m>0$. The current retailer condition $\phi_F(s)=w$ is equivalent to
\begin{equation}
 s=w+[1+(1-\alpha)x]m.
 \label{eq:candidate-price}
\end{equation}
Consider a candidate between records $i$ and $i+1$:
\begin{equation}
 q_i>q>q_{i+1},\qquad s_i<s<s_{i+1},
 \qquad x_i<x<x_{i+1}.
 \label{eq:local-cell}
\end{equation}
Insertion creates the two chord slopes
\begin{equation}
 D_L=\frac{s-s_i}{x-x_i},\qquad
 D_R=\frac{s_{i+1}-s}{x_{i+1}-x}.
 \label{eq:main-new-secants}
\end{equation}
The available outer chord bounds under Q are
\begin{equation}
 U_i^Q=\begin{cases}D_{i-1},&i>1,\\+\infty,&i=1,\end{cases}
 \qquad
 L_{i+1}^Q=\begin{cases}D_{i+1},&i+1<T,\\0,&i+1=T.\end{cases}
 \label{eq:Q-cap-floor}
\end{equation}
The exact interior insertion tests are
\begin{align}
 U_i^Q&\geq D_L\geq m\geq D_R\geq L_{i+1}^Q,
 &&\text{under Q},
 \label{eq:Q-insertion}\\
 m_i&\geq D_L\geq m\geq D_R\geq m_{i+1},
 &&\text{under DI}.
 \label{eq:DI-insertion}
\end{align}
They are the transformed-coordinate test for whether the current-response boundary in \eqref{eq:current-response-gp} can be jointly stitched into the generalized Pareto construction of Theorem~\ref{thm:insertion}.

On the left boundary cell $0\leq x<x_1$, write $D_R^{\rm left}=(s_1-s)/(x_1-x)$. The corresponding tests are
\begin{align}
 m&\geq D_R^{\rm left}\geq D_1,
 \label{eq:left-Q}\\
 m&\geq D_R^{\rm left}\geq m_1.
 \label{eq:left-DI}
\end{align}
On the right boundary cell $x>x_T$, write $D_L^{\rm right}=(s-s_T)/(x-x_T)$. The tests are
\begin{align}
 D_{T-1}&\geq D_L^{\rm right}\geq m\geq0
 &&\text{under Q},
 \label{eq:right-Q}\\
 m_T&\geq D_L^{\rm right}\geq m\geq0
 &&\text{under decision information}.
 \label{eq:right-DI}
\end{align}

For $\alpha>0$, let $\mathcal X_\alpha^K(w)$ be the projection onto the $x$ coordinate of all feasible triples $(x,s,m)$ satisfying \eqref{eq:candidate-price} and the relevant insertion inequalities over all interior and boundary cells. Since $Q_\alpha$ is decreasing, the pivot reduction in \eqref{eq:gp-response-reduction} has the equivalent computational form
\begin{equation}
 q_{\min,\alpha}^K(w)
 =Q_\alpha\!\left(\sup\mathcal X_\alpha^K(w)\right),
 \qquad Q_\alpha(+\infty)=0.
 \label{eq:least-favorable-x}
\end{equation}

For $0<\alpha\leq1$, DI also yields the necessary price-level screen
\begin{equation}
 \max\left\{s_i,\ s_{i+1}-\frac{w_{i+1}-w}{\alpha}\right\}
 \leq s\leq
 \min\left\{s_{i+1},\ s_i+\frac{w-w_i}{\alpha}\right\}.
 \label{eq:necessary-price-interval}
\end{equation}
This interval is only a diagnostic; it does not replace the probability and complete-history insertion tests.

For reference, the transformed quantities at the two endpoint shape classes are
\begin{align}
 x_0(q)&=q^{-1}-1,\qquad m_i=q_i(s_i-w_i),
 \label{eq:alpha0-transform}\\
 m&=q(s-w),\qquad s=w+(1+x)m,
 \label{eq:alpha0-current}\\
 q&=\frac{A}{s-w}\quad\text{on an affine regular boundary},
 \label{eq:alpha0-tail}\\
 x_1(q)&=-\log q,\qquad m_i=s_i-w_i,\qquad s=w+m.
 \label{eq:alpha1-transform}
\end{align}

\subsection{Uniform selection on regular response faces}
\label{app:alpha0-uniform}

At $\alpha=0$, a tied-price face is affine in the transformed representation,
\begin{equation}
 v_0(x)=w+A(1+x),
 \label{eq:alpha0-face}
\end{equation}
and has the original-coordinate form
\[
 \bar G_0(s;A,w)=\frac{A}{s-w},
 \qquad s>w.
\]
where $A>0$ is the retailer's common profit along the optimal-price face. Let $U_0^K(s)$ and $L_0^K(s)$ denote the upper and lower Pareto envelopes implied by information model $K\in\{Q,DI\}$. For a fixed wholesale price $w$ and face level $A$, define the feasible contact set
\begin{equation}
 I_A^K(w)=\left\{s>w: L_0^K(s)\le \frac{A}{s-w}\le U_0^K(s)\right\}.
 \label{eq:alpha0-global-contact-set}
\end{equation}
A response face is globally feasible only if $I_A^K(w)$ is nonempty and it can be completed outside its contact set so as to satisfy the entire transaction history. Applying the generalized-Pareto boundary completion to Nature's problem produces the compactified response family
\begin{equation}
 \mathfrak R_0^K(w)
 =\bigsqcup_{\beta\in\mathfrak B^K(w)}
   \{\beta\}\times\mathcal A_\beta^K(w),
 \qquad
 \mathcal A_\beta^K(w)
 =[\underline A_\beta^K(w),\overline A_\beta^K(w)].
 \label{eq:alpha0-response-graph}
\end{equation}
The index $\beta$ identifies one maximal response component and records its active branch domains, completion, and attainment type. This indexed graph, rather than the ordinary projection onto the scalar $A$, preserves different response laws that can share the same face level. A component may collapse to one face level, and an endpoint may be a limit of complete feasible response curves. Such a limit is retained only when the associated induced response laws converge.

Within a nondegenerate interior component $\mathcal A_\beta^K(w)$, the active branches have the form
\begin{equation}
 \bar G^{-}(s)=\frac{B^-}{s-\omega^-},
 \qquad
 \bar G^{+}(s)=\frac{B^+}{s-\omega^+},
 \qquad
 \omega^-<w<\omega^+.
 \label{eq:alpha0-bracketing-branches}
\end{equation}
The candidate face intersects these branches at demand levels
\begin{equation}
 a_\beta(A;w)=\frac{A-B^+}{\omega^+-w},
 \qquad
 b_\beta(A;w)=\frac{B^- - A}{w-\omega^-}.
 \label{eq:alpha0-demand-endpoints}
\end{equation}
These formulas imply the corresponding optimal-price interval
\begin{equation}
 \ell_\beta(A;w)=w+\frac{A}{b_\beta(A;w)},
 \qquad
 u_\beta(A;w)=w+\frac{A}{a_\beta(A;w)}.
 \label{eq:alpha0-price-interval}
\end{equation}
Under Assumption~\ref{ass:uniform-selection}, the retailer selects uniformly on $[\ell_\beta(A;w),u_\beta(A;w)]$. Writing $a=a_\beta(A;w)$ and $b=b_\beta(A;w)$, the induced demand has support $[a,b]$ and density
\begin{equation}
 f_Y(y\mid A,w,\beta)=\frac{ab}{(b-a)y^2},
 \qquad y\in[a,b],
 \label{eq:alpha0-demand-density}
\end{equation}
when $0<a<b$, and is deterministic at $a$ when $a=b>0$. The corresponding expected fulfilled demand is
\begin{equation}
h_\beta(z;A,w)=
\begin{cases}
z,&z\le a,\\[1mm]
\dfrac{ab}{b-a}\left[\log\!\left(\dfrac{z}{a}\right)+1-\dfrac{z}{b}\right],&a<z<b,\\[3mm]
\dfrac{ab}{b-a}\log\!\left(\dfrac{b}{a}\right),&z\ge b,
\end{cases}
\label{eq:alpha0-expected-sales}
\end{equation}
with $h_\beta(z;A,w)=\min\{z,a\}$ when $a=b>0$. A one-sided endpoint with $a\downarrow0$ is treated as a response-law limit and has zero expected-sales infimum for every finite stock level. A finite limiting endpoint $a,b\to q>0$ instead converges to the deterministic induced response $q$; if the limiting demand curve itself has a forced affine face, that attained face is stored as a distinct $\beta$-component.

For the envelope geometry, the largest face level that can intersect the pointwise feasible band is
\begin{equation}
 \overline A_{0,\mathrm{env}}^K(w)=\max_{s>w}(s-w)U_0^K(s).
 \label{eq:alpha0-terminal-A-global}
\end{equation}
When this terminal contact also satisfies the complete-history embedding conditions, it is the global upper endpoint of the feasible face family. If the maximizing contact occurs in an interior region bracketed by \eqref{eq:alpha0-bracketing-branches}, the terminal contact is the intersection of the two active branches and equals
\begin{equation}
 A_\beta^{\dagger}(w)=\frac{(\omega^+-w)B^-+(w-\omega^-)B^+}{\omega^+-\omega^-},
 \qquad
 q_\beta^{\dagger}=\frac{B^- - B^+}{\omega^+-\omega^-},
 \label{eq:alpha0-terminal-contact}
\end{equation}
with $a_\beta(A_\beta^{\dagger}(w);w)=b_\beta(A_\beta^{\dagger}(w);w)=q_\beta^{\dagger}$. This is the ``terminal contact'' interpretation used in the main text.

Geometrically, retailer profit at transformed coordinate $x$ is
\begin{equation}
 \frac{v_0(x)-w}{1+x}.
 \label{eq:alpha0-profit-chord-app}
\end{equation}
The optimal-price set is therefore the contact set of a supporting line through $(-1,w)$. Concavity implies that this contact set is an interval, possibly reduced to a single point, and that $v_0$ is affine on it with the form \eqref{eq:alpha0-face}. The extremal completion successively replaces slack portions outside the contact set by active historical boundary pieces. For Nature's expected-sales objective, this operation moves the response to the boundary of the feasible response graph without increasing the infimum. Conversely, every attained boundary completion is history-consistent, and every limiting endpoint is generated by a history-consistent sequence. This establishes the equivalence in \eqref{eq:alpha0-extremal-equivalence}; it does not assert that every interior regular demand curve is itself one of the finitely many extremal completions.

Computationally, the regular case requires four steps for each candidate wholesale price: construct the relevant exact or optimality-condition envelopes, enumerate and validate the maximal components of $\mathfrak R_0^K(w)$ against the complete history, evaluate only the two endpoint responses of each component using \eqref{eq:alpha0-expected-sales}, and then solve the resulting one-dimensional stock problem in Theorem~\ref{thm:capacity}. A degenerate response is recorded only as an attained or limiting endpoint with $a_\beta=b_\beta$; no separate point-response term is added.

\subsection{Information and shape nesting}
\label{app:shape-details}

The two historical information sets satisfy
\begin{equation}
 \calF^{DI}_\alpha(\calH)\subseteq\calF^Q_\alpha(\calH).
 \label{eq:info-nesting}
\end{equation}
Applying the same retailer-selection rule to these nested distribution sets gives, for every $z$ and $w$,
\begin{equation}
 \underline h_\alpha^{DI}(z,w)
 \geq
 \underline h_\alpha^{Q}(z,w),
 \label{eq:demand-nesting}
\end{equation}
which is Corollary~\ref{cor:info-value}. For nested exact histories $\calH^{(1)}\subseteq\calH^{(2)}$,
\begin{align}
 \calF_\alpha^K(\calH^{(2)})&\subseteq\calF_\alpha^K(\calH^{(1)}),\\
 \underline h_\alpha^K(z,w;\calH^{(2)})&\geq
 \underline h_\alpha^K(z,w;\calH^{(1)}),\\
 \max_{w\in\calW}\Pi_\alpha^K(w;\calH^{(2)})&\geq
 \max_{w\in\calW}\Pi_\alpha^K(w;\calH^{(1)}).
 \label{eq:history-nesting}
\end{align}

For $\alpha>0$, the retailer response is unique and \eqref{eq:demand-nesting} reduces to the point-demand ordering
\begin{equation}
 q_{\min,\alpha}^{DI}(w)\geq q_{\min,\alpha}^{Q}(w).
 \label{eq:qmin-alpha-info-app}
\end{equation}
For a fixed distribution, the largest supported shape value is
\begin{equation}
 \alpha_F^\star=
 \min\left\{1,\ \inf_{s'>s}
 \frac{\phi_F(s')-\phi_F(s)}{s'-s}\right\}.
 \label{eq:alpha-star}
\end{equation}
If $0<\alpha_1\leq\alpha_2\leq1$, then
\begin{align}
 \calF^K_{\alpha_2}(\calH)&\subseteq\calF^K_{\alpha_1}(\calH),
 \label{eq:F-alpha-nesting}\\
 q_{\min,\alpha_2}^{K}(w)&\geq q_{\min,\alpha_1}^{K}(w).
 \label{eq:qmin-alpha-nesting}
\end{align}
These relations are summarized at the end of Section~\ref{sec:model}. We exclude $\alpha=0$ from this response comparison because Assumption~\ref{ass:uniform-selection} changes the within-distribution retailer-selection rule at that boundary case. Under the ordering of historical retail prices introduced in Section~\ref{sec:model}, exact decision information also imposes the necessary compatibility bound
\begin{equation}
 \alpha\leq
 \min_{i=1,\ldots,T-1}\frac{w_{i+1}-w_i}{s_{i+1}-s_i}.
 \label{eq:alpha-upper-data}
\end{equation}
This compatibility bound limits the largest $\alpha$ that can be imposed without rendering the exact DI history infeasible.

\subsection{Optimality-condition error bands and the informativeness threshold}
\label{app:error-details}

Write $\rho=1-\alpha$ and let
\begin{equation}
 m_i^0=q_i^\rho(s_i-w_i)
 \label{eq:exact-error-slope}
\end{equation}
be the tangent implied by the exact historical decision condition. The error condition in \eqref{eq:foc-error-set} is equivalent to the tangent band
\begin{equation}
 m_i\in\mathcal M_i(\delta_i)
 :=[m_i^0-q_i^\rho\delta_i,\ m_i^0+q_i^\rho\delta_i]\cap(0,\infty).
 \label{eq:error-slope-band}
\end{equation}
The corresponding optimality-condition ambiguity set is
\begin{equation}
 \calF^{\CE,\boldsymbol\delta}_\alpha(\calH)
 =\{F\in\calF^Q_\alpha(\calH):
 |\phi_F(s_i)-w_i|\leq\delta_i\ \forall i\}.
 \label{eq:error-di-set}
\end{equation}
Historical consistency and candidate insertion under optimality-condition error require a joint selection of tangents from these bands:
\begin{align}
 &\exists(m_1,\ldots,m_T)\in
 \prod_{j=1}^T\mathcal M_j(\delta_j)
 \text{ such that }
 m_i\geq D_i\geq m_{i+1}>0
 \quad\forall i=1,\ldots,T-1,
 \label{eq:error-history-feas}\\
 &m_i\geq D_L\geq m\geq D_R\geq m_{i+1}
 \quad\text{in an interior candidate cell, using the same selected }
 (m_1,\ldots,m_T).
 \label{eq:error-insertion}
\end{align}

If $\boldsymbol\delta\leq\boldsymbol\delta'$ componentwise, the complete nesting relation is
\begin{equation}
 \calF^{DI}_\alpha(\calH)
 \subseteq\calF^{\CE,\boldsymbol\delta}_\alpha(\calH)
 \subseteq\calF^{\CE,\boldsymbol\delta'}_\alpha(\calH)
 \subseteq\calF^Q_\alpha(\calH),
 \label{eq:error-nesting}
\end{equation}
and therefore, for any fixed current-response tolerance $\epsilon_0$ and stock level $z$,
\begin{equation}
 \underline h_{\alpha}^{\CE,\bm0,\epsilon_0}(z,w)\geq
 \underline h_{\alpha}^{\CE,\boldsymbol\delta,\epsilon_0}(z,w)\geq
 \underline h_{\alpha}^{\CE,\boldsymbol\delta',\epsilon_0}(z,w)\geq
 \underline h_{\alpha}^{Q,\epsilon_0}(z,w).
 \label{eq:error-qmin-nesting}
\end{equation}

Holding anchor $(s_i,q_i)$ fixed, the affine boundary associated with virtual value $\eta<s_i$ is
\begin{equation}
 G_{\alpha,i}(s;\eta)=
 \begin{cases}
 q_i\left[1+\dfrac{(1-\alpha)(s-s_i)}{s_i-\eta}\right]^{-1/(1-\alpha)},
 &0\leq\alpha<1,\\[0.10in]
 q_i\exp\left\{-\dfrac{s-s_i}{s_i-\eta}\right\},&\alpha=1.
 \end{cases}
 \label{eq:error-boundary}
\end{equation}
For $\alpha<1$, this boundary is defined only where the bracketed term is strictly positive. For a symmetric band with $\delta_i<s_i-w_i$, its least informative side is
\begin{equation}
 \overline G_{\alpha,i}^{\,\mathrm{raw},\delta_i}(s)=
 \begin{cases}
 G_{\alpha,i}(s;w_i+\delta_i),&s<s_i,\\
 G_{\alpha,i}(s;w_i-\delta_i),&s>s_i,
 \end{cases}
 \label{eq:error-worst-side}
\end{equation}
Let $\overline G_{\alpha,i}^{\,Q}(s)$ denote the tightest local upper boundary implied jointly by the neighboring price--purchase observations in the relevant cell. The effective optimality-condition restriction is the intersection of its raw boundary with this price--purchase boundary,
\begin{equation}
 \overline G_{\alpha,i}^{\,\mathrm{eff},\delta_i}(s)
 =\min\{\overline G_{\alpha,i}^{\,\mathrm{raw},\delta_i}(s),
          \overline G_{\alpha,i}^{\,Q}(s)\}.
 \label{eq:error-effective-envelope}
\end{equation}

Suppose the price--purchase history is jointly feasible. At an interior history index $i=2,\ldots,T-1$, Q permits the virtual-value range
\begin{equation}
 [\underline\phi_i^Q,\overline\phi_i^Q]
 =\left[s_i-q_i^{-\rho}D_{i-1},\ s_i-q_i^{-\rho}D_i\right].
 \label{eq:q-virtual-transformed}
\end{equation}
Equivalently, the transformed slope range is
\begin{equation}
 \mathcal M_i^Q=[D_i,D_{i-1}].
 \label{eq:q-slope-range}
\end{equation}
The smallest symmetric band around the exact tangent that covers this entire range is
\begin{equation}
 \delta_i^{\mathrm{crit}}
 =q_i^{-\rho}\max\{m_i^0-D_i,\ D_{i-1}-m_i^0\}.
 \label{eq:critical-error}
\end{equation}
Since $m_i^0=q_i^\rho(s_i-w_i)$, this tangent-space expression is equivalent to the original-coordinate range in \eqref{eq:q-virtual-main} and the distance formula in \eqref{eq:critical-error-main}. For a fixed menu and current-response tolerance $\epsilon_0$, the common historical-error tolerance at which all historical decision information is operationally redundant is
\begin{equation}
 \delta_{\calW}^{\mathrm{crit}}(\epsilon_0)
 =\inf\left\{d\geq0:
 \underline h_{\alpha}^{\CE,d\bm1,\epsilon_0}(z,w)
 =\underline h_{\alpha}^{Q,\epsilon_0}(z,w)
 \ \text{for every }w\in\calW\text{ and }z\geq0\right\}.
 \label{eq:operational-error}
\end{equation}

\subsection{Joint historical and current optimality-condition errors}
\label{app:current-error}

Let the retailer's effective virtual-value target lie within $\epsilon_0$ of the contractual wholesale price:
\begin{equation}
 \theta_0\in\Theta_{\epsilon_0}(w_0)
 :=[w_0-\epsilon_0,w_0+\epsilon_0].
 \label{eq:current-vv-error}
\end{equation}
Conditional on $\theta_0$, the retailer optimizes $(s-\theta_0)\bar F(s)$ while the supplier continues to receive $w_0$. For $\alpha>0$, write $x=x_\alpha(q_r)$, $\rho=1-\alpha$, and $m_0=q_r^\rho(r-\theta_0)$. The transformed response relation is
\begin{equation}
 r=\theta_0+(1+\rho x)m_0,
 \qquad
 |r-w_0-(1+\rho x)m_0|\leq\epsilon_0.
 \label{eq:noisy-current-price}
\end{equation}
Lemma~\ref{lem:current-target-monotonicity} implies that the lowest-demand response uses
$\theta_0=w_0+\epsilon_0$. Hence
\begin{equation}
 q_{\min,\alpha}^{\CE,\boldsymbol\delta,\epsilon_0}(w_0)
 =
 Q_\alpha\!\left(
 \sup\left\{
 x:
 \begin{array}{l}
 \theta_0=w_0+\epsilon_0,\\
 (x,m_0,\theta_0,\bm m)\text{ satisfies
 \eqref{eq:noisy-current-price},}\\
 \text{\eqref{eq:error-history-feas}, and the complete-history insertion system}
 \end{array}
 \right\}
 \right).
 \label{eq:noisy-current-qmin}
\end{equation}
The Q counterpart is obtained by dropping the historical tangent-selection variables $\bm m$ and using the Q insertion inequalities.

For $\alpha=0$, set $\theta_0=w_0+\epsilon_0$. For each feasible component $\beta$, replace the response target $w$ by $\theta_0$ in
\eqref{eq:alpha0-demand-endpoints}--\eqref{eq:alpha0-price-interval}, construct the error-adjusted interval
$\mathcal A_\beta^{\CE,\boldsymbol\delta}(\theta_0)$, and evaluate its two endpoint scenarios:
\begin{equation}
 \underline h_0^{\CE,\boldsymbol\delta,\epsilon_0}(z,w_0)
 =
 \min_{\beta\in\mathfrak B^{\CE,\boldsymbol\delta}(\theta_0)}
 \min_{A\in\{
 \underline A_\beta^{\CE,\boldsymbol\delta}(\theta_0),
 \overline A_\beta^{\CE,\boldsymbol\delta}(\theta_0)\}}
 h_\beta(z;A,\theta_0),
 \qquad
 \theta_0=w_0+\epsilon_0.
 \label{eq:alpha0-current-error-sales}
\end{equation}
The response target in $h_\beta$ is $\theta_0$, but the supplier's contractual margin remains $w_0-c$.

If $\boldsymbol\delta\leq\boldsymbol\delta'$ and
$0\leq\epsilon_0\leq\epsilon_0'$, then
\begin{equation}
 \underline h_\alpha^{\CE,\boldsymbol\delta,\epsilon_0}(z,w_0)
 \geq
 \underline h_\alpha^{\CE,\boldsymbol\delta',\epsilon_0'}(z,w_0)
 \geq
 \underline h_\alpha^{Q,\epsilon_0'}(z,w_0),
 \label{eq:current-error-nesting}
\end{equation}
and
$\underline h_\alpha^{Q,\epsilon_0}(z,w_0)
\geq
\underline h_\alpha^{Q,\epsilon_0'}(z,w_0)$.
The fixed-price value under optimality-condition error is
\begin{equation}
 \Pi_\alpha^{\CE,\boldsymbol\delta,\epsilon_0}(w_0)
 =
 \max_{z\geq0}
 \left\{
 w_0\underline h_\alpha^{\CE,\boldsymbol\delta,\epsilon_0}(z,w_0)-cz
 \right\}.
 \label{eq:noisy-current-value}
\end{equation}

\subsection{Decision-space error and the augmented-knot system}
\label{app:decision-space}

The optional quadratic price-error budget mentioned in the main text is
\begin{equation}
 \calF_\alpha^{\PE,\tau}(\calH)
 =\left\{F\in\calF_\alpha^Q(\calH):\sum_{i=1}^T\omega_i d_i(F)^2\le\tau\right\},
 \qquad \omega_i>0.
 \label{eq:decision-space-budget-set}
\end{equation}

Fixing an ordering $\omega\in\Omega(\boldsymbol\eta,\eta_0)$ produces one finite program $\mathcal P_\omega$ from Theorem~\ref{thm:decision-space-reduction}. The union of these programs is denoted by
$\mathscr Z_\alpha^{\PE}(w_0;\boldsymbol\eta,\eta_0)$
and is made explicit below. For an increasing concave function $v$, write
\[
 \partial v(x)=[v'_+(x),v'_-(x)]
\]
for its concave supergradient interval, with the one-sided derivatives understood in the usual sense.

\begin{lemma}[Finite concave interpolation with tangent brackets]
\label{lem:finite-concavity-test}
Consider finitely many point nodes $(\xi_j,\nu_j)$. Merge nodes with the same $\xi$ when they have the same $\nu$; declare the system infeasible when coincident nodes have different values. Order the remaining nodes as
\[
 0\leq\xi_1<\cdots<\xi_M,
 \qquad
 C_j=\frac{\nu_{j+1}-\nu_j}{\xi_{j+1}-\xi_j},
 \quad j=1,\ldots,M-1.
\]
Suppose some nodes additionally carry required tangent slopes $\mu\in\mathcal T_j$. There exists an increasing concave function $v$ satisfying
\[
 v(\xi_j)=\nu_j,
 \qquad
 \mathcal T_j\subseteq\partial v(\xi_j)
\]
if and only if
\begin{align}
 C_1&\geq C_2\geq\cdots\geq C_{M-1}>0,
 \label{eq:augmented-secants}\\
 C_{j-1}&\geq\mu\geq C_j,
 \qquad
 \mu\in\mathcal T_j,
 \label{eq:augmented-tangent-brackets}
\end{align}
where $C_0=+\infty$ and $C_M=0$.
\end{lemma}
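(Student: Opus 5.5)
The plan is to prove the two directions separately. Necessity follows from the standard secant/supergradient inequalities for concave functions. Sufficiency follows by exhibiting the piecewise-linear interpolant of the merged nodes, extended linearly beyond the first and last nodes with slopes chosen from the required tangent sets.

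\emph{Necessity.} Suppose $v$ is increasing and concave, interpolates the nodes, and satisfies $\mathcal T_j\subseteq\partial v(\xi_j)$. The merging step is forced, since $v$ is single-valued. Concavity means the chord slope $(v(y)-v(x))/(y-x)$ is nonincreasing in each argument, so consecutive chords satisfy $C_1\ge\cdots\ge C_{M-1}$. Strict monotonicity gives $\nu_{j+1}>\nu_j$, hence $C_{M-1}>0$, which is \eqref{eq:augmented-secants}. For the brackets, recall that for concave $v$ and $\xi_{j-1}<\xi_j<\xi_{j+1}$,
\[
C_{j-1}\ge v'_-(\xi_j)\ge v'_+(\xi_j)\ge C_j .
\]
Every $\mu\in\partial v(\xi_j)=[v'_+(\xi_j),v'_-(\xi_j)]$ therefore obeys $C_{j-1}\ge\mu\ge C_j$. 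At $j=1$ the upper bound is vacuous, matching the convention $C_0=+\infty$. At $j=M$, monotonicity gives $v'_+(\xi_M)\ge0$, matching $C_M=0$.

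\emph{Sufficiency.} Let $v$ be the linear interpolant on $[\xi_1,\xi_M]$. By \eqref{eq:augmented-secants}, $v$ is concave with positive slopes. At an interior node its superdifferential is exactly $[C_j,C_{j-1}]$, which contains $\mathcal T_j$ by \eqref{eq:augmented-tangent-brackets}.

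At the first node, if $\xi_1>0$, extend to the left of $\xi_1$ with slope $\bar\mu_1=\max\{C_1,\sup\mathcal T_1\}$. This is finite because each $\mathcal T_j$ is a bounded set of required slopes, either a point or a band as in \eqref{eq:error-slope-band}. The extension keeps $v$ concave and increasing, and makes $\partial v(\xi_1)=[C_1,\bar\mu_1]\supseteq\mathcal T_1$. If $\xi_1=0$ is the domain endpoint, the left derivative is $+\infty$ by convention, so only $\mu\ge C_1$ is needed.

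At the last node, extend to the right of $\xi_M$ with slope $\underline\mu_M=\min\{C_{M-1},\inf\mathcal T_M\}\ge0$. Then $\partial v(\xi_M)=[\underline\mu_M,C_{M-1}]\supseteq\mathcal T_M$, and $v$ remains concave and nondecreasing.

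\emph{Main obstacle.} The delicate point is the boundary convention rather than the interior argument. If some $\mu\in\mathcal T_M$ equals $0$, the right tail must be flat, so "increasing" must be read as nondecreasing beyond the last knot, while remaining strictly increasing on $[\xi_1,\xi_M]$. This is consistent with $C_M=0$ and with the right-cell tests \eqref{eq:right-Q}--\eqref{eq:right-DI}, which allow $m\ge0$. I would state this reading explicitly, and check that it matches the transformed representation, where a flat tail corresponds to the zero-demand closure $Q_\alpha(+\infty)=0$ used in \eqref{eq:least-favorable-x}.

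Finally, I would check that the constructed piecewise-linear $v$ maps back to an admissible $\alpha$-regular demand. By Proposition~\ref{prop:transform}, increasing and concave $v_\alpha$ is equivalent to $\alpha$-regularity, and its linear pieces are exactly the $\bar G_\alpha$ pieces. This yields the "piecewise $\bar G_\alpha$" conclusion used in Theorem~\ref{thm:decision-space-reduction}.
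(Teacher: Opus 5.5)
Your proof is correct and follows the same route as the paper. Necessity comes from monotone chord slopes and the bracketing $C_{j-1}\ge v'_-(\xi_j)\ge v'_+(\xi_j)\ge C_j$, and sufficiency from the piecewise-linear interpolant, whose superdifferential at an interior node is exactly $[C_j,C_{j-1}]$. Your explicit end slopes at $\xi_1$ and $\xi_M$, together with the boundedness caveat on $\mathcal T_1$ when $\xi_1>0$ and the flat-tail caveat when $0\in\mathcal T_M$, spell out what the paper leaves implicit in the conventions $C_0=+\infty$ and $C_M=0$; the paper's only addition is a remark on local smoothing to obtain a positive-density tail.
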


\begin{proof}
Necessity follows from the monotonicity of secant slopes of a concave function and from the fact that every supergradient at a node lies between the left and right secants. For sufficiency, take the piecewise-linear interpolant through the ordered nodes. Condition \eqref{eq:augmented-secants} makes it increasing and concave, and its supergradient interval at node $j$ is exactly $[C_j,C_{j-1}]$, which contains every required slope by \eqref{eq:augmented-tangent-brackets}. When all relevant inequalities are strict, the interpolant can be smoothed locally while preserving the point constraints and arbitrarily closely approximating the prescribed supergradients.
\end{proof}

Let $x_i=x_\alpha(q_i)$ denote the fixed transformed coordinates of the observed price--purchase points. A configuration
\[
 \mathbf z=
 \left(
 \{(\tilde x_i,\tilde s_i,\tilde m_i)\}_{i=1}^T,
 x_0^\star,r_0,m_0,
 x_0^e,y_0
 \right)
\]
belongs to
$\mathscr Z_\alpha^{\PE}(w_0;\boldsymbol\eta,\eta_0)$
when the following conditions hold.

First, all transformed coordinates are nonnegative, all tangent slopes are positive, and
\begin{align}
 \tilde s_i
 &=w_i+[1+(1-\alpha)\tilde x_i]\tilde m_i,
 &
 |\tilde s_i-s_i|
 &\leq\eta_i,
 \qquad i=1,\ldots,T,
 \label{eq:ds-historical-config}\\
 r_0
 &=w_0+[1+(1-\alpha)x_0^\star]m_0,
 &
 |y_0-r_0|
 &\leq\eta_0,
 \qquad y_0\geq w_0.
 \label{eq:ds-current-config}
\end{align}
For the quadratic-budget model in \eqref{eq:decision-space-budget-set}, replace the individual inequalities
$|\tilde s_i-s_i|\leq\eta_i$ by
$\sum_i\omega_i|\tilde s_i-s_i|^2\leq\tau$; the remaining node and tangent system is unchanged.

Second, form the augmented point-node collection
\begin{equation}
 \mathcal N(\mathbf z)
 =
 \{(x_i,s_i):i=1,\ldots,T\}
 \cup
 \{(\tilde x_i,\tilde s_i):i=1,\ldots,T\}
 \cup
 \{(x_0^\star,r_0),(x_0^e,y_0)\},
 \label{eq:ds-node-collection}
\end{equation}
and attach tangent requirements
\begin{equation}
 \mathcal T(\mathbf z)
 =
 \{(\tilde x_i,\tilde m_i):i=1,\ldots,T\}
 \cup
 \{(x_0^\star,m_0)\}.
 \label{eq:ds-tangent-collection}
\end{equation}
After coincident nodes are merged, the ordered nodes and tangent requirements must satisfy
\eqref{eq:augmented-secants}--\eqref{eq:augmented-tangent-brackets}. These are finitely many algebraic inequalities once the ordering is fixed; this ordering-specific system is precisely $\mathcal P_\omega$, and $\mathscr Z_\alpha^{\PE}=\bigcup_{\omega\in\Omega}\mathcal P_\omega$.

The historical ambiguity set
$\calF_\alpha^{\PE,\boldsymbol\eta}(\calH)$
is nonempty if and only if the same system without the two current nodes has a feasible configuration. When the error intervals do not overlap after ordering, the latent nodes can cross only a small number of adjacent observation cells; otherwise all admissible ordering regimes can be enumerated.

\paragraph{Uniform-then-perturb alternative at $\alpha=0$.}
To preserve Assumption~\ref{ass:uniform-selection} under direct current decision error, one may first draw
$R_F(w)\sim\operatorname{Unif}[\ell_F(w),u_F(w)]$
and then allow an additive error $e(R)$ with $|e(R)|\leq\eta_0$. Since demand is decreasing, Nature uses $e(R)=\eta_0$ pointwise, and the robust sales become
\begin{equation}
 \underline h_{0,\mathrm{neutral}}^{\PE,\boldsymbol\eta,\eta_0}(z,w)
 =
 \inf_{F\in\calF_0^{\PE,\boldsymbol\eta}(\calH)}
 \frac{1}{u_F(w)-\ell_F(w)}
 \int_{\ell_F(w)}^{u_F(w)}
 \min\{z,\bar F(s+\eta_0)\}\,ds,
 \label{eq:uniform-then-perturb}
\end{equation}
with the point-mass convention for a singleton face. Because $s+\eta_0$ can lie outside the contact face, \eqref{eq:uniform-then-perturb} is not generally equal to the endpoint expression in Theorem~\ref{thm:alpha0-endpoint}. It can still be evaluated over the finite ordering regimes and piecewise generalized-Pareto completions, but we do not impose an endpoint-only reduction without an additional concavity argument.

\section{Proofs}
\label{app:proofs}
\small

\subsection{Proof of Proposition~\ref{prop:transform}}

Write $\rho=1-\alpha$. Differentiating either branch of \eqref{eq:x-transform} gives
\begin{equation}
 \frac{dx}{dq}=-q^{-(1+\rho)},
 \qquad
 \frac{dq}{dx}=-q^{1+\rho}.
\end{equation}
Consequently,
\begin{equation}
 v_\alpha'(x)
 =p'(q)\frac{dq}{dx}
 =\frac{q^{1+\rho}}{f(p(q))}
 =q^\rho\{p(q)-\phi_F(p(q))\},
 \label{eq:proof-vprime}
\end{equation}
which is positive. Since $q^{-\rho}=1+\rho x$, \eqref{eq:proof-vprime} rearranges to \eqref{eq:phi-transform}.

Let $r(s)=s-\phi_F(s)=\barF(s)/f(s)$. Along the inverse-demand curve, $v_\alpha'(x)=q^\rho r(s)$. Because $dq/ds=-q/r(s)$,
\begin{equation}
 \frac{d}{ds}\{q^\rho r(s)\}
 =q^\rho\{r'(s)-\rho\}
 =q^\rho\{\alpha-\phi_F'(s)\}.
 \label{eq:proof-concavity}
\end{equation}
Both $s=v_\alpha(x)$ and $x$ increase together. Thus $v_\alpha'$ is nonincreasing in $x$ if and only if $\phi_F'\geq\alpha$. The absolutely continuous statement follows from the corresponding monotonicity inequalities.

\paragraph{Historical points and tangents.}

For completeness, differentiate the retailer objective:
\begin{equation}
 \frac{d}{ds}\{(s-w_i)\barF(s)\}
 =\barF(s)-(s-w_i)f(s)
 =f(s)\{w_i-\phi_F(s)\}.
\end{equation}
If $\phi_F(s_i)=w_i$ and $\phi_F$ is nondecreasing, this derivative is nonnegative below $s_i$ and nonpositive above $s_i$. Hence $s_i$ is globally optimal. Conversely, an interior differentiable optimum has zero derivative and therefore $\phi_F(s_i)=w_i$. If $\alpha>0$, virtual value is strictly increasing and the sign change is strict away from $s_i$, so the interior optimum is unique.

At a historical decision satisfying \eqref{eq:historical-foc}, $\phi_F(s_i)=w_i$. Substitution into \eqref{eq:proof-vprime} yields
\begin{equation}
 v_\alpha'(x_i)=q_i^{1-\alpha}(s_i-w_i)=m_i.
\end{equation}
For a current response, \eqref{eq:phi-transform} gives $w=s-[1+(1-\alpha)x]m$, which rearranges to \eqref{eq:candidate-price}. \hfill$\square$

\subsection{Derivation of the affine boundary tails}

For an affine continuation from $(x_i,s_i)$ with historical slope $m_i=q_i^\rho r_i$, where $\rho=1-\alpha$ and $r_i=s_i-w_i$, write
\begin{equation}
 s-s_i=m_i\{x_\alpha(q)-x_\alpha(q_i)\}.
\end{equation}
When $\rho>0$, substitution of \eqref{eq:x-transform} gives
\begin{equation}
 s-s_i
 =\frac{q_i^\rho r_i}{\rho}
 (q^{-\rho}-q_i^{-\rho})
 =\frac{r_i}{\rho}
 \left\{\left(\frac{q_i}{q}\right)^\rho-1\right\}.
\end{equation}
Solving for $q$ proves \eqref{eq:gp-boundary}. When $\rho=0$, $x_1(q)=-\log q$, so
\begin{equation}
 s-s_i=r_i\log\left(\frac{q_i}{q}\right),
\end{equation}
which proves \eqref{eq:exp-boundary}. These calculations characterize equality in one historical tangent; feasibility relative to the remaining records is supplied separately by the Hermite inequalities.

\subsection{Derivation of the local price-response bound}

Joint feasibility gives $\phi_F(s_i)=w_i$, $\phi_F(s)=w$, and $\phi_F(s_{i+1})=w_{i+1}$. Applying \eqref{eq:asr} on the two subintervals yields
\begin{align}
 w-w_i&\geq\alpha(s-s_i),\\
 w_{i+1}-w&\geq\alpha(s_{i+1}-s).
\end{align}
For $\alpha>0$, rearranging gives
\begin{equation}
 s\leq s_i+\frac{w-w_i}{\alpha},
 \qquad
 s\geq s_{i+1}-\frac{w_{i+1}-w}{\alpha}.
\end{equation}
Intersecting these bounds with $[s_i,s_{i+1}]$ proves \eqref{eq:necessary-price-interval}. \hfill$\square$

\subsection{Proof of Proposition~\ref{prop:history-feasible}}

Under the transformation in Proposition~\ref{prop:transform}, every generalized-Pareto boundary in \eqref{eq:gpd-interpolation-main} becomes an affine segment. Therefore the original-coordinate condition
$\underline G_{\alpha,i}^{Q}\le\overline G_{\alpha,i}^{Q}$ on every interval is equivalent to the adjacent transformed secant slopes being nonincreasing. Likewise, adding the historical boundaries in \eqref{eq:di-upper-envelope} is equivalent to requiring each historical decision-implied slope to bracket the adjacent secant. Thus \eqref{eq:Q-band-feas-main} and its DI counterpart are respectively equivalent to \eqref{eq:Q-feas} and \eqref{eq:DI-feas}.

For any concave function, adjacent secant slopes are nonincreasing, giving \eqref{eq:Q-feas}. Positivity follows because $v_\alpha$ is increasing. Conversely, if the secants are positive and nonincreasing, the piecewise-linear interpolant through $(x_i,v_i)$ is increasing and concave. Composing it with $x_\alpha(q)$ produces a decreasing inverse demand; kinks may be smoothed without changing the observations in the strict case.

For Hermite data, concavity requires the derivative at the left endpoint of an interval to be no smaller than the interval secant, and the secant to be no smaller than the derivative at the right endpoint. Hence $m_i\geq D_i\geq m_{i+1}>0$. Conversely, on every interval choose a nonincreasing derivative that starts at $m_i$, ends at $m_{i+1}$, and has average $D_i$. Such a derivative exists precisely because $D_i$ lies between the endpoint values. Integrating and concatenating these derivatives produces an increasing concave Hermite interpolant. The margin restrictions $s_i>w_i$ ensure that the prescribed derivatives are positive. Monotone smoothing produces a valid tail in the closure. \hfill$\square$

\subsection{Proof of Lemma~\ref{lem:response-boundary}}

Let $\phi_F(r)=w_0$ and write $\bar H_{\alpha,0}(\cdot\mid r,q,w_0)=\bar H_\alpha(\cdot\mid r,q,w_0)$ as defined in \eqref{eq:historical-gp-boundary}. Direct calculation gives
\begin{equation}
\phi_{H}(v)=w_0+\alpha(v-r).
\label{eq:H-linear-virtual-proof}
\end{equation}
Because $F$ is $\alpha$-regular,
\[
\phi_F(v)\ge \phi_H(v)\quad\text{for }v>r,
\qquad
\phi_F(v)\le \phi_H(v)\quad\text{for }v<r.
\]
For any differentiable tail,
\begin{equation}
\frac{d}{dv}\log\bar F(v)=-\frac{1}{v-\phi_F(v)}.
\label{eq:log-tail-virtual-proof}
\end{equation}
The same identity holds for $\bar H_{\alpha,0}$. To the right of $r$, \eqref{eq:H-linear-virtual-proof} and $\phi_F(v)\ge\phi_H(v)$ imply that $\log\bar F$ decreases at least as fast as $\log\bar H_{\alpha,0}$. To the left of $r$, integrating \eqref{eq:log-tail-virtual-proof} backward from the common anchor gives the same ordering. Since both curves equal $q$ at $r$, $\bar F(v)\le\bar H_{\alpha,0}(v\mid r,q,w_0)$ on the relevant domain.

Finally, \eqref{eq:H-linear-virtual-proof} shows that when $\alpha>0$, $\phi_H(v)=w_0$ only at $v=r$. Hence only the pivot is optimal for $w_0$. Substituting $v=s_i$ and $v=s_{i+1}$ into \eqref{eq:current-response-gp} gives the implicit definitions \eqref{eq:left-response-boundary-main}--\eqref{eq:right-response-boundary-main}; the closed forms are recorded in \eqref{eq:left-response-boundary-explicit}--\eqref{eq:right-response-boundary-explicit}. \hfill$\square$

\subsection{Proof of Theorem~\ref{thm:insertion}}

Fix $\alpha>0$. Insert $(x,s)$ between $(x_i,s_i)$ and $(x_{i+1},s_{i+1})$. Under Q, concavity of the augmented point sequence requires
\begin{equation}
 U_i^Q\geq D_L\geq D_R\geq L_{i+1}^Q.
\end{equation}
The current best-response condition additionally specifies the derivative at the inserted point through \eqref{eq:candidate-price}. For a concave curve, the left secant is no smaller than the derivative at the inserted point, which is no smaller than the right secant:
\begin{equation}
 D_L\geq m\geq D_R.
\end{equation}
Combining the two displays gives \eqref{eq:Q-insertion}. The same derivative construction used in Proposition~\ref{prop:history-feasible} proves sufficiency.

Under DI, the derivatives at the two historical endpoints are fixed. Concavity on the left and right subintervals requires
\begin{equation}
 m_i\geq D_L\geq m,
 \qquad
 m\geq D_R\geq m_{i+1},
\end{equation}
which is \eqref{eq:DI-insertion}. Conversely, choose nonincreasing derivative profiles on the two subintervals with the specified endpoint values and averages $D_L$ and $D_R$. Concatenation gives one increasing concave curve satisfying all three Hermite conditions. The unchanged historical intervals remain feasible by Proposition~\ref{prop:history-feasible}.

The left and right boundary cells follow from the same argument after removing the unavailable outer chord; their exact inequalities are \eqref{eq:left-Q}--\eqref{eq:right-DI}. Hence $\mathcal X_\alpha^K(w)$ is precisely the projection of all feasible response triples onto their $x$ coordinate. Since $Q_\alpha$ is continuous and strictly decreasing,
\[
 \inf_{x\in\mathcal X_\alpha^K(w)}Q_\alpha(x)
 =Q_\alpha\!\left(\sup\mathcal X_\alpha^K(w)\right),
\]
where $Q_\alpha(+\infty)=0$. If the supremum is finite and attained, the derivative construction above and the unchanged historical interpolants give the stated boundary curve. If it is not attained, the same construction applied to a sequence $x_n\uparrow\sup\mathcal X_\alpha^K(w)$ yields feasible curves whose demands converge to the infimum.

Finally, $x_\alpha(q)=\bar\Gamma_\alpha^{-1}(q)$. An affine piece in the transformed coordinate maps exactly to the generalized-Pareto interpolation in \eqref{eq:gp-interpolation}. The derivative constructions therefore produce the piecewise generalized-Pareto tails in \eqref{eq:q-gp-composite}, with additional tangent junctions when historical decision information binds. This proves \eqref{eq:gp-response-reduction}. \hfill$\square$

\subsection{Proof of Theorem~\ref{thm:decision-space-reduction}}

Write $\rho=1-\alpha$. We show that the finite union $\bigcup_{\omega\in\Omega}\mathcal P_\omega$ generates exactly all feasible current-demand values and their limits.

\paragraph{From a feasible distribution to an augmented configuration.}
Take
$F\in\calF_\alpha^{\PE,\boldsymbol\eta}(\calH)$
and a realized current price
$y_0\in\mathcal A_F(w_0;\eta_0)$.
For every historical record, choose
$\tilde s_i\in\mathcal S_F(w_i)$
with $|\tilde s_i-s_i|\leq\eta_i$, and choose
$r_0\in\mathcal S_F(w_0)$
with $|y_0-r_0|\leq\eta_0$. Define
\[
 \tilde x_i=x_\alpha(\bar F(\tilde s_i)),
 \qquad
 x_0^\star=x_\alpha(\bar F(r_0)),
 \qquad
 x_0^e=x_\alpha(\bar F(y_0)).
\]
By Proposition~\ref{prop:transform}, the transformed inverse demand $v_\alpha$ is increasing and concave and passes through all fixed and latent nodes.

For any price $r=v_\alpha(x)$ that maximizes the retailer's profit under wholesale price $w$, define
\[
 m=Q_\alpha(x)^\rho(r-w).
\]
The one-sided derivatives of
$G_w(x)=Q_\alpha(x)\{v_\alpha(x)-w\}$
satisfy
\[
 G'_{w,-}(x)
 =
 Q_\alpha(x)\{v'_{\alpha,-}(x)-m\},
 \qquad
 G'_{w,+}(x)
 =
 Q_\alpha(x)\{v'_{\alpha,+}(x)-m\}.
\]
Optimality gives
$G'_{w,-}(x)\geq0\geq G'_{w,+}(x)$, and hence
$m\in\partial v_\alpha(x)$. Since
$Q_\alpha(x)^{-\rho}=1+\rho x$, we also have
\[
 r=w+(1+\rho x)m.
\]
Applying this argument to every $\tilde s_i$ and to $r_0$ gives
\eqref{eq:ds-historical-config}--\eqref{eq:ds-current-config} and the tangent requirements in
\eqref{eq:ds-tangent-collection}. Lemma~\ref{lem:finite-concavity-test} then implies the augmented chord and tangent inequalities. Thus the resulting variables form a configuration
$\mathbf z\in\mathscr Z_\alpha^{\PE}(w_0;\boldsymbol\eta,\eta_0)$, and the realized demand is
$\bar F(y_0)=Q_\alpha(x_0^e)$.

\paragraph{From an augmented configuration to a feasible distribution.}
Conversely, take
\[
 \mathbf z\in
 \mathscr Z_\alpha^{\PE}(w_0;\boldsymbol\eta,\eta_0).
\]
Lemma~\ref{lem:finite-concavity-test} provides an increasing concave piecewise-linear function $v_\alpha$ through all augmented point nodes and with every prescribed slope in the appropriate supergradient interval. Extend it to the left and right using the same one-sided concave completions as in Appendix~\ref{app:response-details}; the extensions can be chosen with positive, nonincreasing slopes. Define the tail on the resulting price range by
\begin{equation}
 \bar F(s)
 =
 Q_\alpha(v_\alpha^{-1}(s)).
 \label{eq:ds-proof-tail}
\end{equation}
Proposition~\ref{prop:transform} implies that this tail is $\alpha$-regular in the maintained closure. The fixed nodes give
$\bar F(s_i)=Q_\alpha(x_i)=q_i$.

Consider a prescribed tangent node $(x,r,m)$ associated with wholesale price $w$. By construction,
$m\in\partial v_\alpha(x)$ and
$r=w+(1+\rho x)m$, so
$Q_\alpha(x)^\rho(r-w)=m$. Therefore the left derivative of $G_w$ at $x$ is nonnegative and the right derivative is nonpositive. Equivalently, the virtual value is weakly below $w$ to the left of $r$ and weakly above $w$ to the right. Hence $r\in\mathcal S_F(w)$; when $\alpha>0$ this optimizer is unique, whereas at $\alpha=0$ it may belong to a contact face. Applying this argument to every latent historical pivot and to the current exact pivot shows that
$F\in\calF_\alpha^{\PE,\boldsymbol\eta}(\calH)$ and
$y_0\in\mathcal A_F(w_0;\eta_0)$. The realized current demand is again
$Q_\alpha(x_0^e)$.

If an affine boundary or a prescribed supergradient is not admitted by the positive-density class itself, approximate the piecewise-linear $v_\alpha$ by increasing concave smooth functions that preserve the fixed nodes and converge uniformly with their one-sided slopes. The associated tails converge to \eqref{eq:ds-proof-tail}, the decision distances remain within arbitrarily small enlargements of the stated radii, and the induced demand converges. This is precisely the closure convention used for the other generalized-Pareto extremal constructions.

The two directions show that the ordering-specific programs $\mathcal P_\omega$ generate all feasible current-demand values and all limits that can determine the infimum. Since $Q_\alpha$ is continuous and strictly decreasing,
\[
 \inf_{\mathbf z\in\mathscr Z_\alpha^{\PE}}
 Q_\alpha(x_0^e)
 =
 Q_\alpha\!\left(
 \sup_{\mathbf z\in\mathscr Z_\alpha^{\PE}}x_0^e
 \right),
\]
which is equivalent to the finite-program formula in Theorem~\ref{thm:decision-space-reduction}. Finally, every affine segment of $v_\alpha$ maps through \eqref{eq:ds-proof-tail} to a generalized-Pareto segment. Thus an attained optimizer yields a piecewise generalized-Pareto least-favorable tail, and an unattained supremum yields an approximating sequence. \hfill$\square$

\subsection{Proof of Proposition~\ref{prop:error-model-comparison}}

Suppose $\alpha>0$. Let $r_i$ be the unique optimizer under $w_i$, so
$\phi_F(r_i)=w_i$. The $\alpha$-regularity inequality
\eqref{eq:asr} implies
\[
 |\phi_F(s_i)-w_i|
 =
 |\phi_F(s_i)-\phi_F(r_i)|
 \geq
 \alpha|s_i-r_i|.
\]
Hence
$|\phi_F(s_i)-w_i|\leq\delta_i$
implies
$\dist(s_i,\mathcal S_F(w_i))=|s_i-r_i|\leq\delta_i/\alpha$.
Applying this argument to every record gives
\eqref{eq:foc-to-ds-set}.

For the current response, let $r(\theta)$ and $r(w_0)$ be the unique optimizers under targets $\theta$ and $w_0$. Then
\[
 |\theta-w_0|
 =
 |\phi_F(r(\theta))-\phi_F(r(w_0))|
 \geq
 \alpha|r(\theta)-r(w_0)|.
\]
Thus $|\theta-w_0|\leq\epsilon_0$ implies
$r(\theta)\in\mathcal A_F(w_0;\epsilon_0/\alpha)$.

For the reverse direction, suppose $\phi_F$ is $L$-Lipschitz and choose
$r_i\in\mathcal S_F(w_i)$ with
$|s_i-r_i|\leq\eta_i$. Then
\[
 |\phi_F(s_i)-w_i|
 =
 |\phi_F(s_i)-\phi_F(r_i)|
 \leq
 L|s_i-r_i|
 \leq
 L\eta_i,
\]
which proves \eqref{eq:ds-to-foc-set}. At $\alpha=0$, $\phi_F$ is only required to be nondecreasing and can change arbitrarily slowly over an arbitrarily long price interval. Therefore no distribution-free positive lower slope exists and no finite residual-to-distance conversion is available. \hfill$\square$

\subsection{Proof of Proposition~\ref{prop:decision-distance-threshold}}

By definition of the supremum in \eqref{eq:decision-distance-threshold}, every Q-feasible distribution satisfies
$\dist(s_i,\mathcal S_F(w_i))\leq\eta_i^{\mathrm{crit}}$.
Thus any radius at least $\eta_i^{\mathrm{crit}}$ leaves the Q ambiguity set unchanged. If
$\eta_i<\eta_i^{\mathrm{crit}}$, the definition of a supremum guarantees a Q-feasible distribution whose distance exceeds $\eta_i$, so the decision-space restriction excludes that distribution.

When $\alpha>0$, each $\mathcal S_F(w_i)$ is a singleton. The largest absolute deviation from $s_i$ over all feasible responses is attained or approached at an extreme feasible response price. Hence
\[
 \sup_F|S_F(w_i)-s_i|
 =
 \max\left\{
 s_i-\inf_F S_F(w_i),
 \sup_F S_F(w_i)-s_i
 \right\},
\]
which is \eqref{eq:decision-distance-threshold-positive}. The response-price extrema are the leftmost and rightmost feasible pivots in the Q insertion system. \hfill$\square$

\subsection{Proof of Theorem~\ref{thm:alpha0-endpoint}: finite response families}

Fix a feasible demand curve $F$ and a wholesale price $w$. In the transformed representation, retailer profit is the slope of the chord from $(-1,w)$ to the transformed inverse-demand curve $v_0$. Let $A$ be the maximum slope. The affine function
\[
L_A(x)=w+A(1+x)
\]
majorizes $v_0$, and the equality set is a closed interval, possibly a singleton. Mapping that interval back to the price coordinate yields a tied-price interval $[\ell,u]$ on the response curve $A/(s-w)$.

We next show that Nature need only consider response intervals whose endpoints are pinned by historical boundary events. Suppose the left endpoint $\ell$ is not fixed by an observed point, an active Q/DI/CE boundary, or the endpoint of the maintained support. Then there is a small neighborhood to the right of $\ell$ in which the demand curve can be moved strictly below $A/(s-w)$ without violating any historical restriction or $\alpha=0$ regularity. This shortens the tied-price interval to $[\ell+\varepsilon,u]$. A uniform price on $[\ell+\varepsilon,u]$ first-order stochastically dominates a uniform price on $[\ell,u]$; since demand $A/(S-w)$ is decreasing in $S$, expected fulfilled demand cannot increase. Hence a worst case never needs a free left endpoint.

Likewise, if the right endpoint $u$ is not pinned by a boundary event, the equality region can be extended slightly to the right while preserving feasibility. A uniform price on $[\ell,u+\varepsilon]$ first-order stochastically dominates a uniform price on $[\ell,u]$, so expected fulfilled demand again cannot increase. Repeating these two local operations moves every worst-case candidate to a response interval whose endpoints are determined by active historical boundaries, observed points, support endpoints, or a limiting contact. Outside the tied-price interval, slack portions of the demand curve may be replaced by the adjacent active boundary pieces without changing the retailer's response law.

The historical Q, DI, and CE envelopes contain only finitely many Pareto pieces. Therefore only finitely many pairs of active endpoint boundaries and attainment types can arise. These are exactly the components indexed by $\beta$ in $\mathfrak R_0^M(w)$, with feasible profit levels $A\in[\underline A_\beta^M(w),\overline A_\beta^M(w)]$. The preceding stochastic-order argument shows that every feasible response is weakly dominated, for Nature's objective, by an attained or limiting response in this finite family.

Conversely, the boundary-stitching inequalities defining an attained component produce a complete history-consistent regular demand curve. At a limiting endpoint, the same inequalities produce a sequence of complete feasible curves whose response intervals and induced-demand laws converge to the recorded limit. Thus the infimum over the finite response families equals the infimum over the original set of feasible demand curves, proving the finite-family representation used in \eqref{eq:alpha0-extremal-equivalence}. \hfill$\square$

\subsection{Proof of Theorem~\ref{thm:alpha0-endpoint}: endpoint reduction}

By the finite-family part of Theorem~\ref{thm:alpha0-endpoint}, Nature's infimum is preserved by the finite indexed graph of components $\mathcal A_\beta^M(w)$. It is therefore sufficient to prove concavity of the local expected-sales function on an arbitrary nondegenerate interior component; collapsed and one-sided limiting components follow by continuity along their own response-graph branches. Fix its active branch pair $\beta=((B^-,\omega^-),(B^+,\omega^+))$ and a wholesale price $w$ with $\omega^-<w<\omega^+$. Write
\[
 p=\omega^+-w>0,\qquad n=w-\omega^->0.
\]
From \eqref{eq:alpha0-demand-endpoints},
\[
 a=\frac{A-B^+}{p},\qquad
 b=\frac{B^- - A}{n}.
\]
Using $a$ as the scalar parameter gives the affine relation
\begin{equation}
 A=B^+ +pa,
 \qquad
 b=d-\kappa a,
 \qquad
 \kappa=\frac{p}{n}>0,
 \qquad
 d=\frac{B^- - B^+}{n}.
 \label{eq:alpha0-affine-ab-proof}
\end{equation}
It is therefore sufficient to show that $g_z(a)=h_\beta(z;A(a),w)$ is concave on the feasible interval.

There are three regimes. If $z\leq a$, then $g_z(a)=z$ and the second derivative is zero. Suppose $a<z<b$. Define $r=b/a>1$ and $x=z/a\in(1,r)$. Differentiating \eqref{eq:alpha0-expected-sales} while using \eqref{eq:alpha0-affine-ab-proof} gives
\begin{equation}
 g_z''(a)
 =-\frac{N(r,x,\kappa)}{a(r-1)^3},
 \label{eq:alpha0-second-derivative-proof}
\end{equation}
where
\begin{align}
 N(r,x,\kappa)
 =&\;2\kappa^2\{x-1-\log x\}
 +2\kappa r\{x-1-2\log x\}
 +2\kappa(x-1)\nonumber\\
 &+r^3-2r^2+2rx-r-2r^2\log x.
 \label{eq:alpha0-N-proof}
\end{align}
Its derivatives with respect to $x$ are
\begin{equation}
 \frac{\partial N}{\partial x}
 =\frac{2(\kappa+r)\{(\kappa+1)x-(\kappa+r)\}}{x},
 \qquad
 \frac{\partial^2N}{\partial x^2}
 =\frac{2(\kappa+r)^2}{x^2}>0.
\end{equation}
Thus the minimum occurs at $x^\circ=(\kappa+r)/(\kappa+1)\in[1,r]$. Let $m=\kappa+1\geq1$ and $t=(r-1)/m\geq0$. Substitution yields
\[
 N(r,x^\circ,\kappa)
 =m^2\left(mt^3+3t^2+2t-2(1+t)^2\log(1+t)\right).
\]
The bracket is bounded below by
\[
 p(t)=t^3+3t^2+2t-2(1+t)^2\log(1+t).
\]
Since $p(0)=p'(0)=0$ and
\[
 p''(t)=2\{3t-2\log(1+t)\}\geq2t\geq0,
\]
we have $N\geq0$, and hence \eqref{eq:alpha0-second-derivative-proof} is nonpositive.

If $z\geq b$, direct differentiation gives
\begin{equation}
 g_z''(a)
 =-\frac{(\kappa+r)^2}{ar(r-1)^3}
 \left(r^2-1-2r\log r\right)\leq0,
\end{equation}
because $v(r)=r^2-1-2r\log r$ satisfies $v(1)=0$ and $v'(r)=2\{r-1-\log r\}\geq0$. The formulas and first derivatives agree at $z=a$ and $z=b$, so $g_z$ is concave on the entire feasible interval. As $r\downarrow1$, the expression converges to $\min\{z,a\}$, covering a degenerate face by continuity. The affine relation between $A$ and $a$ preserves concavity. A concave function attains its minimum over a compact interval at an endpoint, which gives the inner reduction in \eqref{eq:alpha0-endpoint-reduction}. Taking the minimum across the finite collection of feasible components, including their one-sided closure limits, proves the theorem. \hfill$\square$

\subsection{Proof of Corollary~\ref{cor:info-value}}

By construction, \eqref{eq:FQ}--\eqref{eq:FDI} imply
$\calF_\alpha^{DI}(\calH)\subseteq\calF_\alpha^Q(\calH)$. The retailer-selection rule is the same under both information models. Therefore, for every fixed $z$ and $w$, the set over which the infimum in \eqref{eq:robust-expected-sales} is taken under DI is a subset of the corresponding Q set. Taking infima gives \eqref{eq:qmin-info}. When $\alpha>0$, $Y_F(w)$ is deterministic and $\underline h_\alpha^K(z,w)=\min\{z,q_{\min,\alpha}^K(w)\}$, which also yields $q_{\min,\alpha}^{DI}(w)\ge q_{\min,\alpha}^{Q}(w)$. \hfill$\square$

\subsection{Proof of Theorem~\ref{thm:capacity}}

For every response law, $\min\{z,Y\}\leq z$, and therefore
$\underline h_\alpha^M(z,w)\leq z$. If $w\leq c$,
\[
 w\underline h_\alpha^M(z,w)-cz
 \leq
 (w-c)z
 \leq0,
\]
so $z=0$ is optimal.

Suppose $w>c$ and the response model is point-valued. By
\eqref{eq:point-response-sales},
\[
 \underline h_\alpha^M(z,w)
 =
 \min\{z,q_{\min,\alpha}^M(w)\}.
\]
The supplier objective has slope $w-c>0$ below
$q_{\min,\alpha}^M(w)$ and slope $-c<0$ above it. This proves
\eqref{eq:value-fixed-w}. The argument does not depend on whether the point-response property comes from $\alpha>0$ or from the set-valued direct decision-space model.

Now consider $\alpha=0$ under uniform tie selection. Every endpoint expected-sales function in
\eqref{eq:alpha0-endpoint-reduction} is increasing and concave in $z$. The pointwise minimum of finitely many concave functions is concave because its hypograph is the intersection of their convex hypographs. Thus
$\underline h_0^M(z,w)$ and the objective in
\eqref{eq:value-fixed-w} are concave. Once $z$ exceeds the largest endpoint support
$\bar b_0^M(w)$, every endpoint scenario has constant expected sales while stocking cost continues to increase. No optimizer can exceed
$\bar b_0^M(w)$, which proves the stated search interval and existence of an optimizer. \hfill$\square$

\subsection{Proof of Corollary~\ref{cor:exact-realization}}

Fix $w$ and set $z=z_\alpha^{M,*}(w)$. For every response law
$\nu\in\mathfrak Y_\alpha^M(w)$, the definition of
\eqref{eq:robust-expected-sales} gives
\[
 \E_\nu[\min\{z,Y\}]
 \geq
 \underline h_\alpha^M(z,w).
\]
Multiplying by $w$, subtracting $cz$, and using the optimality of $z$ proves
\eqref{eq:exact-realization}. Evaluating at
$w=w_\alpha^{M,*}$ gives the ex ante guarantee.

If the active response model is point-valued and $w>c$, Theorem~\ref{thm:capacity} sets
$z=q_{\min,\alpha}^M(w)$. Every feasible realized demand is at least $z$, so all reserved units are sold and the inequality becomes equality. The case $w\leq c$ is immediate because $z=0$. \hfill$\square$

\subsection{Optimality-condition-error monotonicity and current-response evaluation}

We first prove Lemma~\ref{lem:current-target-monotonicity}. Fix $F$ and write
$g(s,\theta)=(s-\theta)\bar F(s)$ on the common action set $s\geq w_0$. For $s'>s$ and $\theta'>\theta$,
\[
 \{g(s',\theta')-g(s,\theta')\}
 -
 \{g(s',\theta)-g(s,\theta)\}
 =
 -(\theta'-\theta)\{\bar F(s')-\bar F(s)\}
 \geq0.
\]
Thus $g$ has increasing differences, so both endpoints of its argmax interval are nondecreasing in $\theta$. Couple the uniform selections with a common
$U\sim\operatorname{Unif}[0,1]$ and write
\[
 S_F(\theta)
 =
 \ell_F(\theta)
 +
 U\{u_F(\theta)-\ell_F(\theta)\}.
\]
The coupled price is nondecreasing in $\theta$, whereas
$\bar F(S_F(\theta))$ and
$\min\{z,\bar F(S_F(\theta))\}$ are nonincreasing. Taking expectations proves the lemma and identifies
$w_0+\epsilon_0$ as Nature's worst target.

If $\boldsymbol\delta\leq\boldsymbol\delta'$ componentwise, then
\[
 \calF_\alpha^{\CE,\boldsymbol\delta}(\calH)
 \subseteq
 \calF_\alpha^{\CE,\boldsymbol\delta'}(\calH)
 \subseteq
 \calF_\alpha^{Q}(\calH).
\]
If in addition $\epsilon_0\leq\epsilon_0'$, then
$\Theta_{\epsilon_0}(w_0)\subseteq\Theta_{\epsilon_0'}(w_0)$. Applying the same retailer-selection rule to these nested sets and taking infima gives
\eqref{eq:foc-sales-nesting} and
\eqref{eq:current-error-nesting}.

For $\alpha>0$, substituting the worst target into the insertion system gives
\eqref{eq:noisy-current-qmin}. For $\alpha=0$, it produces the shifted regular response-face family in Appendix~\ref{app:alpha0-uniform}; Theorem~\ref{thm:alpha0-endpoint} then yields
\eqref{eq:alpha0-current-error-sales}. Maximizing contractual profit over $z$ gives
\eqref{eq:noisy-current-value}. \hfill$\square$

\subsection{Monotonicity of the virtual-value boundary}

Let $\rho=1-\alpha>0$, $\Delta=s-s_i$, and $r=s_i-\eta>0$. For the first branch of \eqref{eq:error-boundary},
\begin{equation}
 \log G_{\alpha,i}(s;\eta)
 =
 \log q_i-\frac{1}{\rho}
 \log\left(1+\frac{\rho\Delta}{r}\right).
\end{equation}
Differentiation with respect to $\eta$ gives
\begin{equation}
 \frac{\partial}{\partial\eta}
 \log G_{\alpha,i}(s;\eta)
 =
 -\frac{\Delta}
 {r^2\left(1+\rho\Delta/r\right)}.
\end{equation}
The derivative is positive to the left of the anchor and negative to its right. The same sign follows directly at $\alpha=1$ from $\log G_{1,i}(s;\eta)=\log q_i-\Delta/r$. Maximizing over the symmetric virtual-value interval therefore yields \eqref{eq:error-worst-side}.

\subsection{Proof of Proposition~\ref{prop:critical-error}}

At an interior knot of an increasing concave interpolant, the derivative must lie below the left secant and above the right secant. Thus the complete price--purchase-compatible range is $D_i\leq m_i\leq D_{i-1}$. The band in \eqref{eq:error-slope-band} contains this entire interval precisely when
\begin{align}
 m_i^0-q_i^\rho\delta_i&\leq D_i,\\
 m_i^0+q_i^\rho\delta_i&\geq D_{i-1}.
\end{align}
Solving the two inequalities for $\delta_i$ gives \eqref{eq:critical-error}; substituting $m_i^0=q_i^\rho(s_i-w_i)$ gives the distance formula in \eqref{eq:critical-error-main}. If either inequality fails, the corresponding endpoint of the price--purchase derivative range is excluded.

For any price--purchase-feasible insertion, the admissible derivative at historical knot $i$ is a subinterval of $[D_i,D_{i-1}]$. Covering the full historical range therefore makes the error-band restriction redundant in the augmented interpolation as well. Finally, substituting $\phi_i=s_i-q_i^{-\rho}m_i$ at the two derivative endpoints gives the interval in \eqref{eq:q-virtual-main}. \hfill$\square$

\end{document}